\newcommand{\AJS}[1]{{\color{black}#1}}
\newcommand{\rhn}[1]{{\color{black}#1}}
\newcommand{\jp}[1]{{\color{black}#1}}
\newcommand{\eremk}{\hfill\rule{1.2ex}{1.2ex}}
\begin{document}

\title[Fractional obstacle problem]{Weighted Sobolev regularity and rate of approximation of the obstacle problem for the integral fractional Laplacian}

\author[J.P.~Borthagaray]{Juan Pablo~Borthagaray}
\address[J.P.~Borthagaray]{Department of Mathematics, University of Maryland, College Park, MD 20742, USA}
\email{jpb@math.umd.edu}
\thanks{JPB has been supported in part by NSF grant DMS-1411808}

\author[R.H.~Nochetto]{Ricardo H.~Nochetto}
\address[R.H.~Nochetto]{Department of Mathematics and Institute for Physical Science and Technology, University of Maryland, College Park, MD 20742, USA}
\email{rhn@math.umd.edu}
\thanks{RHN has been supported in part by NSF grant DMS-1411808}

\author[A.J.~Salgado]{Abner J.~Salgado}
\address[A.J.~Salgado]{Department of Mathematics, University of Tennessee, Knoxville, TN 37996, USA}
\email{asalgad1@utk.edu}
\thanks{AJS is supported by NSF grant DMS-1720213}

%\date{Draft version of \today}

\subjclass[2000]{35S15,   %%  Boundary value problems for pseudodifferential operators
45P05,                    %%  Integral operators
35R11,                    %%  Fractional partial differential equations
35R35,                    %%  Free boundary problems
41A29,                    %%  Approximation with constraints
65K15,                    %%  Numerical methods for variational inequalities and related problems
65N15,                    %%  Error bounds
65N30}                    %%  Finite elements, Rayleigh-Ritz and Galerkin methods, finite methods;

\keywords{obstacle problem, free boundaries, finite elements, fractional diffusion, weighted Sobolev spaces, graded meshes.}

\begin{abstract}
We obtain regularity results in weighted Sobolev spaces for the solution of the obstacle problem for the integral fractional Laplacian $(-\Delta)^s$ \rhn{in a Lipschitz bounded domain $\Omega\subset\R^n$ satisfying the exterior ball condition.} The weight is a power of the distance to the boundary \rhn{$\partial\Omega$ of $\Omega$} that accounts for the singular boundary behavior of the solution for any $0<s<1$. These bounds then serve us as a guide in the design and analysis of \rhn{a finite element scheme over graded meshes for any dimension $n$, which is optimal for $n=2$.}
\end{abstract}

\maketitle

%%%%%%%%%%%%%%%%%%%%%%%%%%%%%%%%%%%%%%%%%%%%%%%%%%%%%%%%%%%%%%%%%%%%%%%%%%%%%%%%%%
\section{Introduction}
\label{sec:Intro}
%%%%%%%%%%%%%%%%%%%%%%%%%%%%%%%%%%%%%%%%%%%%%%%%%%%%%%%%%%%%%%%%%%%%%%%%%%%%%%%%%%

The purpose of this work is, ultimately, the design of an optimally convergent finite element method for the solution of the obstacle problem for the integral fractional Laplacian which, from now on, we shall simply refer to as the fractional obstacle problem. In addition to the intrinsic interest that the study of unilateral problems with nonlocal operators may give rise to, the fractional obstacle problem appears in the study of systems of particles with strong (non Newtonian) repulsion \cite{CarrilloDelgadinoMellet,Serfaty} and of
optimal stopping times for jump processes (see \cite{MR2256030} and \cite[Chapter 10]{MR2001996}). The latter, in particular, is used in the modeling of the rational price of a perpetual American option \cite{ContTankov}. We also refer the reader to \cite{RosOtonSerra-nonlinear, Silvestre} for an account of other applications.

To make matters precise, here we describe the (eventually equivalent) formulations that the fractional obstacle problem may be written as. For $n \geq 1$ we let $\W \subset \rn$ \AJS{be a bounded domain} with Lipschitz boundary $\pp\W$ that satisfies the exterior ball condition. For two functions $f: \W \to \R$ and $\chi : \overline\W \to \R$, with $\chi < 0$ on $\pp\W$, and $s \in (0,1)$ we seek a function $u: \rn \to \R$ such that $u = 0$ in $\cW = \rn\setminus\Omega$ and it satisfies the complementarity system
\begin{equation}
\label{eq:complementarity}
  \min\left\{ \lambda, u-\chi \right\} = 0, \mae \Omega, \qquad \lambda := \Laps u - f.
\end{equation}
This problem can also be written as a constrained minimization problem on the space $\tHs$ (see section~\ref{sec:notation} for notation). Indeed, if we define the set of admissible functions 
\begin{equation}
\label{eq:def_K}
  \K = \left\{ v \in \tHs: v \geq \chi \mae \Omega \right\},
\end{equation}
then the solution to the fractional obstacle problem can also be characterized as the (unique) minimizer of the functional
\[
  \mathcal{J}: v \mapsto \mathcal{J}(v) = \frac12 | v |_{\tHs}^2 - \langle f, v \rangle,
\]
over the convex set $\K$. Equivalently, this minimizer $u \in \K$ solves the variational inequality
\begin{equation}
\label{eq:obstacle}
  (u,u-v)_s \leq \langle f, u-v \rangle, \quad \forall v \in \K,
\end{equation}
\AJS{where by $(\cdot,\cdot)_s$ we denote the inner product on $\tHs$ induced by the fractional Laplacian \rhn{(see \eqref{eq:defofinnerprod}), and $\langle\cdot,\cdot\rangle$ is the duality pairing between $\tHs$ and its dual $H^{-s}(\Omega)$}}.
We refer the reader to section~\ref{sub:obstacle_known} and \cite{MR3630405} for a more thorough exploration of these formulations and their equivalence. Finally we must mention that although in bounded domains there are many, nonequivalent, definitions of the operator $\Laps$, motivated by applications, here we choose the so-called integral one; that is, for a sufficiently smooth function $v:\rn \to \R$ weh set
\begin{equation}
\label{eq:defofLaps}
  \Laps v(x) = C(n,s) \mbox{ p.v.} \int_\rn \frac{v(x)-v(y)}{|x-y|^{n+2s}} \diff y, \qquad C(n,s) = \frac{2^{2s} s \Gamma(s+\frac{n}{2})}{\pi^{n/2}\Gamma(1-s)}.
\end{equation}

Our choice of definition is justified by the fact that, unlike the regional or the spectral ones, the integral fractional Laplacian of order $s$ is the infinitesimal generator of a $2s$-stable L\'evy process. \AJS{In this context, working on a bounded domain would correspond to a so-called \emph{killed process}, that is one that finishes upon exiting the domain.
%The fractional Laplacian of order $s$ is the infinitesimal generator of a $2s$-stable L\'evy process. 
L\'evy} processes have been widely employed for modeling market fluctuations, both for risk management and option pricing purposes. It is in this context that, as mentioned above, the fractional obstacle problem arises as a pricing model for American options. More precisely, if $u$ represents the rational price of a perpetual American option, modeling the assets prices by a L\'evy process $X_t$ and denoting by $\chi$ the payoff function,  then $u$ solves \eqref{eq:obstacle}. We refer the reader to \cite{ContTankov} for an overview of the use of jump processes in financial modeling.

Taking into account their applications in finance, it is not surprising that numerical schemes for integro-differential inequalities have been proposed and analyzed in the literature; we refer the reader to \cite{HRSW:09} for a survey on these methods. These applications aim to approximate the price of a number of assets; therefore, the consideration of a logarithmic price leads to problems posed in the whole space $\rn$. 
For the numerical solution, it is usual to perform computations on a sufficiently large tensor-product domain.
Among the schemes based on Galerkin discretizations, reference \cite{WW:08} utilizes piecewise linear Lagrangian finite elements, while \cite{MNS:05} proposes the use of wavelet bases in space.
As for approximations of variational inequalities involving integral operators on arbitrary bounded domains, an a posteriori error analysis is performed in \cite{NvPZ:10}.

Since the seminal work of Silvestre \cite{Silvestre}, the fractional obstacle problem started to draw the attention of the mathematical community. 
Using potential theoretic methods, reference \cite{Silvestre} shows that if the obstacle is of class $C^{1,s}$, then the solution to the fractional obstacle problem is of class $C^{1,\alpha}$ for all $\alpha \in (0,s)$; optimal $C^{1,s}$ regularity of solutions was derived assuming convexity of the contact set.
The pursuit of the optimal regularity of solutions without a convexity hypothesis, in turn, motivated the celebrated extension by Caffarelli and Silvestre \cite{CaffarelliSilvestre} for the fractional Laplacian in $\rn$. Using this extension technique, Caffarelli, Salsa and Silvestre proved, in \cite{CaffarelliSalsaSilvestre}, the optimal regularity of solutions (\cf Proposition \ref{prop:localHolder} below). It is important to notice, however, that this is only an interior regularity result. Nothing is said about the boundary behavior of the solution to \eqref{eq:obstacle}. This is a highly nontrivial issue, as it is known that even the solution to a linear problem involving the fractional Laplacian on a very smooth domain possesses limited regularity near the boundary; see \cite{Grubbboundary,Grubb} and section~\ref{sub:linear} below for details. In addition, regularity results in H\"older spaces are not amenable to the development of an error analysis for a finite element method.

Using the extension technique, one could in principle follow the lines of \cite[Section 2]{CaffarelliSalsaSilvestre} to obtain, via a localization argument, regularity results for the obstacle problem posed on a bounded domain. This would entail dealing with a degenerate elliptic equation where the weight belongs to the Muckenhoput class $A_2$. We could then invoke the results from \cite{FabesKenigSerapioni,MENGESHA2019184} and the translation invariance in the $x$-variable of the extension weight to conclude the desired regularity. While accomplishing this program seems possible, it would only yield results for the fractional Laplacian, and the techniques would not extend to more general nonlocal operators, like those studied in \cite{CaffarelliRosOtonSerra}.

Our regularity approach is entirely nonlocal and based on localization without invoking the extension. However, we must immediately point out that if $0\le\eta\le1$ is a smooth cut-off function, then
\[
  \Laps (\eta u) \ne \eta \Laps u \quad \mbox{in } \{ \eta = 1 \}
\]
because of the nonlocal structure of $\Laps$. Consequently, we cannot deduce regularity of $\eta u$ directly from that of $\Laps u$. This is one of the main technical difficulties we overcome in this work.

In this paper, \AJS{under certain smoothness and compatibility assumptions on the forcing $f$ and the obstacle $\chi$ (see \eqref{eq:defofcalF} for a precise statement)}, we combine H\"older estimates from \cite{CaffarelliSalsaSilvestre,Silvestre} and \cite{RosOtonSerra} to derive interior and boundary H\"older estimates for \eqref{eq:obstacle}. This is achieved under a nondegeneracy condition: the obstacle needs to be negative near the boundary. In this case, the solution to \eqref{eq:obstacle} behaves, essentially, like the solution to a linear problem near the boundary, for which the H\"older regularity is known \cite{RosOtonSerra}. We then follow ideas from \cite{AcostaBorthagaray} to derive global regularity results in weighted Sobolev spaces, which guide us in the design of an optimally convergent finite element scheme over graded meshes. These meshes compensate for the singular boundary behavior of the solution of \eqref{eq:obstacle} regardless of the fractional order $s\in(0,1)$. We discuss their design and derive a quasi-optimal rate of convergence in the natural energy norm.

We must comment that a related numerical analysis for the obstacle problem, corresponding to the \emph{spectral} fractional Laplacian, was carried out in \cite{MR3393323}; we refer the reader to \cite{BBNOS} for a comparison between these operators and a survey of numerical methods for fractional diffusion. The recent work \cite{BG18} also deals with finite element approximations to nonlocal obstacle problems, involving both finite and infinite-horizon kernels. Experiments, carried out for one-dimensional problems with uniform meshes, indicate convergence with order $h^{1/2}$ in the energy norm. However, \cite{BG18} does not provide an error analysis for the nonlocal obstacle problem. In this paper we show that using suitably graded meshes essentially \emph{doubles} the convergence rate in the energy norm. Moreover, a standard argument allows us to extend the results we obtain in this work to nonlocal operators with finite horizon. Finally, we comment that \cite{MR3090147} provides regularity results of Lewy--Stampacchia type for the fractional Laplacian. Their use in a numerical setting, however, is not immediate.
 
The paper is organized as follows. In section~\ref{sec:notation} we set notation and assumptions employed in the rest of the work, and review preliminary results about solutions of the linear Dirichlet problem for the fractional Laplacian on bounded domains and the fractional obstacle problem. These results are employed in section~\ref{sec:regularity} to derive weighted Sobolev regularity estimates for solutions of problem \eqref{eq:obstacle}. Then, section~\ref{sec:FEM} applies our regularity estimates to deduce a quasi-optimal convergence rate for a finite element approximation of the fractional obstacle problem \eqref{eq:obstacle} over graded partitions of bounded polytopal domains. This requires the study of a positivity preserving quasi-interpolation operator in weighted fractional Sobolev spaces; this novel development is carried out in section~\ref{sub:positive}. Finally, numerical examples presented in section~\ref{sec:numex} illustrate the sharpness of our theoretical results and reveal some qualitative properties of the coincidence set.

%%%%%%%%%%%%%%%%%%%%%%%%%%%%%%%%%%%%%%%%%%%%%%%%%%%%%%%%%%%%%%%%%%%%%%%%%%%%%%%%%%
\section{Notation and preliminaries}
\label{sec:notation}
%%%%%%%%%%%%%%%%%%%%%%%%%%%%%%%%%%%%%%%%%%%%%%%%%%%%%%%%%%%%%%%%%%%%%%%%%%%%%%%%%%

In this section we will introduce some notation and the set of assumptions that we shall operate under. For $n \geq 1$ we let $\W \subset\rn$ be \AJS{a bounded domain} with Lipschitz boundary $\pp\W$ that satisfies the exterior ball condition. The complement of $\W$ will be denoted by $\cW$ and the fractional order by $s \in (0,1)$. The ball of radius $R$ and center $x \in \rn$ will be denoted by $B_R(x)$, and we set $B_R = B_R(0)$. During the course of certain estimates we shall denote by $\omega_{n-1}$ the $(n-1)$-dimensional Hausdorff measure of the unit sphere $\partial B_1$. As usual, we will denote by $C$ a nonessential constant, and its specific value might change from line to line. By $C(A)$ we shall mean a nonessential constant that may depend on $A$. Finally, by $A \approx B$ we mean that $A \leq C B$ and $B \leq C A$.

Unless indicated otherwise, we will follow standard notation regarding function spaces. In particular, \AJS{for a bounded domain $D \subset \rn$, $k \in \N \cup \{0\}$, and $\gamma \in [0,1]$, we denote
\[
  C^{k,\gamma}(\overline D) = \left\{ w \in C^k( \overline D) : 
  \sup_{x,y \in \overline D, x \neq y} \max_{\beta \in (\N \cup\{0\})^n: |\beta| = k} \frac{|\partial^\beta w(x)- \partial^\beta w(y)|}{|x-y|^\gamma} < \infty \right \}.
\]
In addition $w \in C^{k,\gamma}(D)$ if $w \in C^{k,\gamma}(\overline U)$ for all $U \Subset D$.}
\rhn{The Sobolev space of order $s>0$ over $\rn$ is defined as}
\[
  H^s(\rn) = \left\{ v \in L^2(\rn): \xi \mapsto (1+|\xi|^2)^{s/2}\mathcal{F}(v)(\xi) \in L^2(\rn) \right\},
\]
with norm
\[
  \| v \|_{H^s(\rn)} = \left\| \xi \mapsto (1+|\xi|^2)^{s/2}\mathcal{F}(v)(\xi) \right\|_{L^2(\rn)}.
\]
In these definitions $\mathcal{F}$ denotes the Fourier transform. The closure of $C_0^\infty(\Omega)$ in $H^s(\rn)$ will be denoted by $\tHs$. This space can also be characterized as follows:
\begin{equation}
\label{eq:defoftHs}
  \tHs := \left\{ v_{|\Omega} : v \in H^s(\rn), \ \supp v \subset \overline \W \right\}.
\end{equation}
We comment that, on $\tHs$, the natural inner product is equivalent to
\begin{equation}
\label{eq:defofinnerprod}
  (v,\phii)_s = \frac{C(n,s)}2 \iint_{\rn \times \rn} \frac{(v(x)-v(y))(\phii(x) - \phii(y))}{|x-y|^{n+2s}} \diff x \diff y, \qquad |v|_{\tHs} = (v,v)_s^{1/2}.
\end{equation}
The duality pairing between $\tHs$ and its dual $H^{-s}(\Omega)$ is denoted by $\langle \cdot , \cdot \rangle$. In view of \eqref{eq:defofinnerprod} we see that, whenever $v \in \tHs$ then $\Laps v \in H^{-s}(\Omega)$ and that
\begin{equation}
\label{eq:innerprodisduality}
  (v,\phii)_s = \langle \Laps v, \phii \rangle, \quad \forall \phii \in C_0^\infty(\W).
\end{equation}

In section~\ref{sec:regularity} it will become necessary to characterize the behavior of the solution to \eqref{eq:obstacle} near the boundary. To do so, we must introduce weighted Sobolev spaces, where the weight is a power of the distance to the boundary. We define
\[
  \delta(x) = \dist(x,\pp\W), \qquad \delta(x,y) = \min\{ \delta(x), \delta(y) \}.
\]
Then, for $k \in \N\cup\{0\}$ and $\alpha \in \R$, we consider the norm
\begin{equation}
\label{eq:defofWnorm}
  \| v \|_{H^k_\alpha(\W)}^2 = \AJS{\sum_{0 \leq |\beta| \leq k} \int_\W|\partial^\beta v(x)|^2 \delta(x)^{2\alpha} \diff x}.
\end{equation}
and define $H^k_\alpha(\W)$ and $\tH^k_\alpha(\W)$ as the closures of $C^\infty(\W)$ and $C_0^\infty(\W)$, respectively, with respect to the norm \eqref{eq:defofWnorm}. We also need to define weighted Sobolev spaces of a non-integer differentiation order, and their zero-trace versions.

\begin{definition}[weighted fractional Sobolev spaces]
\label{def:wfSobolev}
Let $0<t \in \R \setminus \mathbb{Z}$ and $\alpha \in \R$. Assume that $k \in \N \cup\{0\}$ and $\sigma \in (0,1)$ are the unique numbers such that $t = k + \sigma$. The weighted fractional Sobolev space is
\begin{equation*}
  H^t_\alpha (\W) = \left\{ v \in H^k_\alpha(\W) \colon  | \pp^\beta v |_{H^{\sigma}_\alpha (\W)} < \infty \ \forall \beta \in \N^n, \ |\beta| = k \right\} ,
\end{equation*}
where 
\[
| v |^2_{H^{\sigma}_\alpha (\W)} = \iint_{\W\times\W} \frac{|v(x)-v(y)|^2}{|x-y|^{n+2\sigma}} \, \delta(x,y)^{2\alpha} \diff x \diff y.
\]
We endow this space with the norm
\[
  \| v \|_{H^t_\alpha (\W)}^2 = \| v \|_{H_\alpha^k (\W)}^2 + \sum_{|\beta| = k } | \pp^\beta v |^2_{H^{\sigma}_\alpha (\W)} .
\]
Similarly, the zero-trace weighted Sobolev space is
\begin{equation*}
  \tH^t_\alpha (\W) = \left\{ v \in \tH^k_\alpha(\W) \colon | \pp^\beta v |_{H^{\sigma}_\alpha (\rn)} < \infty \ \forall \beta \in \N^n, \ |\beta| = k \right\} ,
\end{equation*}
with the norm
\[
  \| v \|_{\tH^t_\alpha (\W)}^2 = \| v \|_{H_\alpha^k (\W)}^2 + \sum_{|\beta| = k } | \pp^\beta v |^2_{H^{\sigma}_\alpha (\rn)} .
\]
\end{definition}

Spaces like the ones defined above have been considered, for example, in \cite{AcostaBorthagaray} in connection with the study of the regularity properties of the solution to the linear fractional Poisson problem. However, unlike \cite{AcostaBorthagaray}, the spaces $H^t_\alpha(\W)$ and $\widetilde H^t_\alpha(\W)$ require functions to belong respectively to $H^k_\alpha(\W)$ and $\widetilde H^t_\alpha(\W)$, instead of $H^k(\W)$. This is a weaker condition and that shall become important below.

We remark also that, during our discussion, we will make use of the norms and seminorms of $H^t_\alpha(\omega)$ and $\tH^t_\alpha(\omega)$, where $\omega$ is a Lipschitz subdomain of $\W$. If that is the case, the weight $\delta$ will always refer to the distance to $\pp\W$.

As a final preparatory step, we recall an interior regularity result for $s$--harmonic functions over balls.

\begin{lemma}[balayage]
\label{lemma:balayage}
Let $w \in L^\infty(\rn)$ be such that $\Laps w = 0$ in $B_R$. Then, $w \in C^\infty(B_{R/2})$.
\end{lemma}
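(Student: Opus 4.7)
The plan is to exploit the explicit Poisson integral representation for bounded $s$-harmonic functions on a ball. Specifically, it is a classical fact (see e.g.\ Landkof's monograph on potential theory) that if $w \in L^\infty(\rn)$ satisfies $\Laps w = 0$ in $B_R$ in the distributional sense, then for every $x \in B_R$
\[
  w(x) = \int_{\rn \setminus B_R} P_R(x,y)\, w(y) \diff y, \qquad P_R(x,y) = c_{n,s}\left(\frac{R^2-|x|^2}{|y|^2-R^2}\right)^s \frac{1}{|x-y|^n}.
\]
The first step is thus to invoke (or re-derive via uniqueness of bounded solutions to the exterior Dirichlet problem) this representation, which reduces the regularity question to the smoothness of a single explicit kernel.

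The second step is to analyze $P_R(\cdot,y)$ on the smaller ball $B_{R/2}$, uniformly in $y \in \rn \setminus B_R$. On this region $R^2 - |x|^2 \ge 3R^2/4$, so the factor $(R^2-|x|^2)^s$ is $C^\infty$ in $x$; and since $|x-y| \ge |y| - |x| \ge R/2$ whenever $x \in B_{R/2}$ and $y \notin B_R$, the factor $|x-y|^{-n}$ is also $C^\infty$ in $x$. Consequently, for every multi-index $\beta$ there is a constant $C_\beta$ such that
\[
  \sup_{x \in B_{R/2}} |\partial_x^\beta P_R(x,y)| \leq C_\beta\, (|y|^2-R^2)^{-s}(1+|y|)^{-n},
\]
and the right-hand side is integrable on $\rn \setminus B_R$: the singularity $(|y|-R)^{-s}$ near $\pp B_R$ is integrable because $s<1$, and the decay at infinity is at least $|y|^{-n-2s}$.

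The third step is a standard application of differentiation under the integral sign. Since $w$ is bounded and the derivatives of $P_R$ in $x$ are dominated, uniformly on $B_{R/2}$, by an $L^1(\rn\setminus B_R)$ function, dominated convergence yields $\partial^\beta w \in C(B_{R/2})$ for every multi-index $\beta$, which is the desired conclusion $w \in C^\infty(B_{R/2})$.

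The main obstacle is really only the first step: the Poisson representation formula for merely bounded $s$-harmonic functions requires some care, since $w$ need not belong to $\tH^s(B_R)$ and the representation must be justified by approximation or by uniqueness of bounded solutions to the exterior data problem. Once this is in place, the smoothness argument based on differentiation under the integral is entirely routine.
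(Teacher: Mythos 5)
Your proposal is correct and takes essentially the same route as the paper: both invoke the Poisson kernel representation for bounded $s$-harmonic functions on a ball from Landkof's book and conclude by differentiating under the integral sign on the smaller ball $B_{R/2}$. Your write-up simply makes explicit the dominated-derivative bound that the paper leaves implicit when it says it is "legitimate to differentiate to any order."
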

\begin{proof}
According to \cite[formula (1.6.11')]{Landkof}, in the ball $B_R$, any $s$-harmonic function $w$ can be represented using a Poisson kernel:
\[
  w(x) = \int_{\compl{B_R}} w(y) P(x,y) \diff y,
\]
where
\[
  P(x,y) = C \left( \frac{R^2 - |x|^2}{|y|^2 - R^2} \right)^s \frac{1}{|x-y|^n} .
\]
Consequently, whenever $x \in B_{R/2}$, it is legitimate to differentiate to any order the representation above.
\end{proof}

%--------------------------------------------------------------------------------
\subsection{The linear problem}
\label{sub:linear}
%--------------------------------------------------------------------------------

Here we consider the linear version of \eqref{eq:obstacle}; that is, we formally set $\chi = - \infty$ to arrive at the problem: given $g \in H^{-s}(\Omega)$ we seek for $w_g \in \tHs$ such that
\begin{equation}
\label{eq:Dirichlet}    
  \Laps w_g = g \text{ in } \W, \qquad w_g = 0 \text{ in } \cW.
\end{equation}
Identity \eqref{eq:innerprodisduality} yields the existence and uniqueness of a solution to this problem. In addition, since the kernel is positive, we have a nonlocal maximum principle.

\begin{proposition}[nonlocal maximum principle]
\label{prop:fractmaxprinciple}
Let $g \in H^{-s}(\W)$ be such that $g \geq 0$ in $\W$, then we have that $w_g \geq 0$ in $\W$.
\end{proposition}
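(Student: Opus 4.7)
The plan is to prove positivity via an energy argument, testing the equation against the negative part of the solution. Write $w_g^- = \max(-w_g,0)$. Since $w_g \in \tHs$ and truncation at zero preserves $\tHs$ (the space is a lattice under $\max/\min$ with $0$), we have $w_g^- \in \tHs$ and $w_g^- = 0$ in $\cW$. The identity \eqref{eq:innerprodisduality}, which as stated holds for $\phii \in C_0^\infty(\W)$, extends by density and continuity of both sides in $\tHs$ to
\[
  (w_g, w_g^-)_s = \langle g, w_g^- \rangle.
\]
Since $g \geq 0$ means $\langle g, \phii \rangle \geq 0$ for every nonnegative $\phii \in \tHs$, the right-hand side is nonnegative.

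Next I would establish the pointwise algebraic identity
\[
  (w_g(x)-w_g(y))(w_g^-(x)-w_g^-(y)) = -(w_g^-(x)-w_g^-(y))^2 - w_g^+(x)w_g^-(y) - w_g^+(y)w_g^-(x),
\]
valid a.e., obtained by splitting $w_g = w_g^+ - w_g^-$ and using $w_g^+ w_g^- \equiv 0$. Inserting this into the integral representation \eqref{eq:defofinnerprod} gives
\[
  (w_g, w_g^-)_s = - |w_g^-|_{\tHs}^2 - C(n,s) \iint_{\rn \times \rn} \frac{w_g^+(x) w_g^-(y)}{|x-y|^{n+2s}} \diff x \diff y,
\]
where both terms on the right are nonpositive.

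Combining the last two displayed identities yields
\[
  0 \leq \langle g, w_g^- \rangle = (w_g,w_g^-)_s \leq -|w_g^-|_{\tHs}^2 \leq 0,
\]
so $|w_g^-|_{\tHs} = 0$. Since $|\cdot|_{\tHs}$ is a norm on $\tHs$ (equivalent to the full one), we conclude $w_g^- = 0$, i.e. $w_g \geq 0$ a.e. in $\W$.

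The only delicate point is verifying $w_g^- \in \tHs$ and extending \eqref{eq:innerprodisduality} from smooth compactly supported test functions to $w_g^-$; both are standard but should be mentioned. The algebraic identity and the sign analysis are then the only substantive steps, and the cross-term involving $w_g^+(x)w_g^-(y)$ is actually a bonus that is not strictly needed for the conclusion but reflects the genuinely nonlocal contribution to positivity.
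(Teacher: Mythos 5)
Your proof is correct and self-contained. The paper itself does not give a proof of this proposition; it simply cites \cite[Proposition 4.1]{MR3447732}, so there is no argument in the paper to compare against. What you have written is the standard energy (Stampacchia truncation) argument for weak maximum principles associated with nonlocal Dirichlet forms: test the weak formulation against $w_g^-$, use the algebraic identity
\[
(w_g(x)-w_g(y))(w_g^-(x)-w_g^-(y)) = -(w_g^-(x)-w_g^-(y))^2 - w_g^+(x)w_g^-(y) - w_g^+(y)w_g^-(x),
\]
which follows from $w_g^\pm(x) w_g^\mp(x) = 0$, and conclude $|w_g^-|_{\tHs}=0$. The two prerequisites you flag are indeed the right ones to check: (i) $\tHs$ is stable under the contraction $v\mapsto \max(v,0)$, because $|w_g^-(x)-w_g^-(y)|\le|w_g(x)-w_g(y)|$ pointwise and $w_g^-=0$ on $\cW$ whenever $w_g=0$ there; (ii) the weak formulation of \eqref{eq:Dirichlet} already holds against all test functions in $\tHs$ (not merely $C_0^\infty(\W)$), which is how existence and uniqueness of $w_g$ is obtained in the first place, so testing with $w_g^-$ is legitimate. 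One small remark: the nonlocal cross term $w_g^+(x)w_g^-(y)$ is not needed for the conclusion, but it is the quantitative manifestation of the fact that the nonlocal operator enforces positivity through long-range interaction, not only through the quadratic term; your observation to this effect is apt.
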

\begin{proof}
See \cite[Proposition 4.1]{MR3447732}.
\end{proof}

The investigation of the regularity of the solution to \eqref{eq:Dirichlet} has been an active area of research in recent years. Solutions to this problem are known to possess limited boundary regularity. Namely, the behavior
\[
w_g(x) \approx \dist(x,\pp\W)^s,
\]
is expected independently of the smoothness of the domain $\W$ and right hand side $g$. Assuming $\W$ is smooth, this behavior can be precisely quantified in terms of H\"ormander regularity \cite{Grubb}; for Lipschitz domains satisfying the exterior ball condition it can also be expressed in terms of the reduced H\"older regularity of solutions \cite{RosOtonSerra},
\[
  \| w_g \|_{C^{\AJS{0,s}}(\rn)} \leq C \| g\|_{L^\infty(\W)}.
\]

If the right hand side $g$ happens to be more regular, then finer estimates on the solution $w_g$ can be derived.

\begin{proposition}[H\"older estimates for the linear problem] \label{prop:Holder}
Let $\Omega$ be a bounded Lipschitz domain satisfying the exterior ball condition. Let $g \in C^{\AJS{0,1-s}}(\overline\Omega)$ and $w_g$ be the solution of \eqref{eq:Dirichlet}. Then, $w_g$ satisfies 
\begin{equation} \label{eq:holder_linear}
\|w_g\|_{C^{\AJS{0,s}}(\overline\Omega)}  + \sup_{x \in \Omega} \delta(x)^{1-s} |\nabla w_g(x)| + \sup_{x,y \in \Omega} \delta(x,y)\frac{|\nabla w_g(x) - \nabla w_g(y)|}{|x-y|^s} \leq C(\Omega,s) \| g \|_{C^{\AJS{0,1-s}}(\overline\Omega)}.
\end{equation}
\end{proposition}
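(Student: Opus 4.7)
The plan is to reduce all three bounds to a single interior Schauder estimate for $\Laps$, rescaled by the distance to $\pp\W$. The first term $\|w_g\|_{C^{0,s}(\overline\W)}$ is precisely the global reduced Hölder bound quoted just before the proposition; I would simply invoke its standard proof, which combines the nonlocal maximum principle (Proposition~\ref{prop:fractmaxprinciple}) with the explicit barrier $(R^2-|x-x_0|^2)_+^s$ placed on exterior balls (available thanks to the exterior ball condition) to yield the pointwise control $|w_g(x)|\leq C\|g\|_{L^\infty(\W)}\,\delta(x)^s$. This boundary control will be the workhorse for the remaining two bounds.

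For those, I would proceed by rescaling. Fix $x_0\in\W$, set $r := \delta(x_0)/2$, and introduce
\[
v(y) := r^{-s} w_g(x_0 + r y),\qquad y\in\rn,
\]
so that $\Laps v(y) = r^{s} g(x_0 + r y)$ on $B_1$. Combining the global $C^{0,s}(\rn)$ bound on $w_g$ (extended by zero) with $|w_g(z)|\leq C\|g\|_{L^\infty}\delta(z)^s$, a short computation produces
\[
|v(y)| \leq C\|g\|_{L^\infty(\W)}(1+|y|)^s \qquad\text{for all } y\in\rn,
\]
which is exactly the polynomial growth at infinity tolerated by the interior theory for $\Laps$. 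Moreover $\|r^s g(x_0+r\cdot)\|_{C^{0,1-s}(\overline{B_1})}\leq \|g\|_{C^{0,1-s}(\overline\W)}$; the exponent $1-s$ is chosen precisely so that the scaling factors in the equation and in the Hölder seminorm cancel. I would then invoke an interior Schauder estimate for $\Laps$, obtained by splitting $v$ into an $s$-harmonic piece (smooth in the interior by Lemma~\ref{lemma:balayage}) plus a piece solving an equation with $C^{0,1-s}$ source, to conclude
\[
\|v\|_{C^{1,s}(\overline{B_{1/2}})} \leq C\|g\|_{C^{0,1-s}(\overline\W)}.
\]

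Reverting the rescaling gives $\nabla w_g(x_0+ry) = r^{s-1}\nabla v(y)$, whence $|\nabla w_g(x_0)|\leq Cr^{s-1}\|g\|_{C^{0,1-s}(\overline\W)}$, which is the second bound. For the third bound, given $x,y\in\W$ with $\delta(x)\leq\delta(y)$, I would distinguish two cases. If $|x-y|\leq \delta(x)/4$, both points lie in a ball of radius $\delta(x)/2$ centered at $x$, and rescaling $[\nabla v]_{C^{0,s}(B_{1/2})}$ yields $[\nabla w_g]_{C^{0,s}(B_{\delta(x)/2}(x))}\leq C\delta(x)^{-1}\|g\|_{C^{0,1-s}}$; multiplying by $\delta(x,y)=\delta(x)$ gives the desired estimate. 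If instead $|x-y|>\delta(x)/4$, I would apply the pointwise gradient bound at each endpoint (using $\delta(y)\geq\delta(x)$ and $s-1<0$ to bound $|\nabla w_g(y)|$ by $C\delta(x)^{s-1}\|g\|_{C^{0,1-s}}$) and conclude via $\delta(x,y)\,\delta(x)^{s-1}=\delta(x)^s \leq 4^s|x-y|^s$.

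The main obstacle is the interior Schauder estimate at the rescaled level: it must tolerate the polynomial growth $(1+|y|)^s$ of $v$ at infinity (inherited from the $C^{0,s}$ bound on $w_g$) while producing a $C^{1,s}$ bound on $B_{1/2}$ with a constant independent of $x_0$ and depending only on $n$, $s$, and $\W$. Making this quantitative---and propagating the constants carefully through the scaling---is the most delicate step; it is essentially the content of the estimates in \cite{RosOtonSerra}, which is the natural reference to invoke.
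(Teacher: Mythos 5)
The paper's own proof is a one-line citation: ``It suffices to set $\beta = 1-s$ in \cite[Proposition 1.4]{RosOtonSerra}.'' What you have written is, in effect, a reconstruction of the proof of that cited result: the boundary barrier $(R^2-|x-x_0|^2)_+^s$ on exterior balls together with the maximum principle to get $|w_g|\lesssim \delta^s$, then a rescaling at scale $\delta(x_0)/2$ to reduce the gradient and gradient-H\"older bounds to an interior Schauder estimate for $(-\Delta)^s$ with $C^{0,1-s}$ right-hand side, followed by a two-case covering argument ($|x-y|$ small versus comparable to $\delta(x)$) for the seminorm term. This is indeed how Ros-Oton and Serra prove their Proposition 1.4, so your route is not genuinely different from the paper's --- it is just ``reprove rather than cite.'' That said, a few details are looser than you let on. The statement $\|r^s g(x_0 + r\cdot)\|_{C^{0,1-s}(\overline{B_1})}\le \|g\|_{C^{0,1-s}(\overline\Omega)}$ is only literally true for $r\le 1$; in general the H\"older seminorm of the rescaled source scales as $r\,[g]_{C^{0,1-s}}$, which is bounded by $(\diam\Omega/2)[g]_{C^{0,1-s}}$ --- harmless, but ``cancel'' is overstated. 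The deeper issue, which you do flag as the delicate point, is the interior Schauder step: you propose to use Lemma~\ref{lemma:balayage} (balayage) for the $s$-harmonic piece of the splitting, but that lemma as stated in the paper requires $w\in L^\infty(\rn)$, whereas your $v$ (and hence the $s$-harmonic piece $v_1$) only enjoys the sublinear growth $(1+|y|)^s$. One either needs to truncate $v$ outside a fixed ball (producing a smooth, controlled error term) or to note that the Poisson-kernel representation and its consequences extend to functions of growth $o(|y|^{2s})$; this is exactly what \cite{RosOtonSerra} work out carefully. In short: the sketch is sound and faithfully mirrors the argument underlying the reference, but if one is not simply going to cite \cite{RosOtonSerra}, those two points must be addressed rather than waved at.
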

\begin{proof}
  It suffices to set $\beta = 1-s$ in \cite[Proposition 1.4]{RosOtonSerra}.
\end{proof}

For one-dimensional or radial domains, these regularity estimates can be further sharpened by deriving explicit expressions for the map $w \mapsto (-\Delta)^s \left[\dist(\cdot,\pp\W)^s w\right]$ in terms of expansions in bases consisting of special functions, see \cite{ABBM, Dyda2016}.
Of importance in the design of optimally convergent finite element schemes is \cite{AcostaBorthagaray}, where regularity in spaces similar to those introduced in Definition~\ref{def:wfSobolev} (weighted fractional Sobolev spaces) was derived. Below we extend and modify these results to fit the framework that we are adopting here.

\begin{theorem}[weighted regularity of $w_g$]
\label{teo:weighted}
Let $\W$ be a bounded Lipschitz domain satisfying the exterior ball condition. Let $g \in C^{\AJS{0,1-s}}(\overline \W)$ and $w_g$ be the unique solution of \eqref{eq:Dirichlet}. Then, for every \AJS{$\eps \in (0,s/2)$}, we have that $w_g \in \tH^{1+s-2\eps}_{1/2- \eps}(\W)$, with the estimate
\[
  \| w_g \|_{\tH^{1+s-2\eps}_{1/2- \eps}(\W)} \leq \frac{C(\W,s)}{\eps} \| g \|_{C^{\AJS{0,1-s}}(\overline \W)} .
\]
\end{theorem}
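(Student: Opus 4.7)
The plan is to estimate each piece of $\|w_g\|_{\tH^{1+s-2\eps}_{1/2-\eps}(\W)}^2$ directly using the three pointwise bounds furnished by Proposition~\ref{prop:Holder} (taken with $\beta = 1-s$): $\|w_g\|_{L^\infty(\W)} \le C$, $|\nabla w_g(x)| \le C\delta(x)^{s-1}$, and $|\nabla w_g(x)-\nabla w_g(y)| \le C\delta(x,y)^{-1}|x-y|^s$, each carrying the factor $\|g\|_{C^{0,1-s}(\overline\W)}$. Writing $\sigma = s-2\eps$ and $\alpha = 1/2-\eps$, the squared norm decomposes into a weighted $L^2$ part, a weighted gradient part $\int_\W|\nabla w_g|^2\delta^{2\alpha}\,dx$, and the Gagliardo-type seminorm $|\nabla w_g|_{H^\sigma_\alpha(\rn)}^2$. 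The first two are immediate and uniform in $\eps$: the $L^2_\alpha$ part is bounded by $\|w_g\|_{L^\infty}^2\int_\W \delta^{1-2\eps}\,dx$, and the weighted gradient by $C\int_\W\delta^{2s-1-2\eps}\,dx \le C(\W)/(2s-2\eps) \le C(\W)/s$ since $\eps<s/2$.

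The crux is the Gagliardo seminorm. Since $\nabla w_g$ vanishes on $\cW$, it equals twice $\int_\W\int_{\rn} \frac{|\nabla w_g(x)-\nabla w_g(y)|^2}{|x-y|^{n+2\sigma}}\,\delta(x,y)^{2\alpha}\,dy\,dx$, and for each fixed $x\in\W$ the plan is to split the $y$-integration at the scale $|y-x| = \delta(x)/2$. On the near zone $B_{\delta(x)/2}(x)\subset\W$ one has $\delta(x,y)\approx\delta(x)$, and the H\"older difference bound renders the integrand comparable to $\delta(x)^{-1-2\eps}|x-y|^{-n+4\eps}$; integration in $y$ yields $C\eps^{-1}\delta(x)^{4\eps}$, and the outer $x$-integration contributes $C\eps^{-1}\int_\W\delta^{-1+2\eps}\,dx \le C\eps^{-2}$ via the standard coarea bound $|\{\delta<r\}| \le Cr$. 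On the far zone one uses $|\nabla w_g(x)-\nabla w_g(y)|^2 \le C(\delta(x)^{2s-2}+\delta(y)^{2s-2}\mathbf{1}_\W(y))$ together with $\delta(x,y)\le\delta(x)$; the inner integral produces $C(s-2\eps)^{-1}\delta(x)^{-2s+4\eps}$, and hence the same type of $C(\W,s)\,\eps^{-2}$ bound after the outer integration. Summing the pieces and taking square roots delivers the announced $C(\W,s)/\eps$.

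The main obstacle is the exponent bookkeeping required to reduce every outer integral to the (sharp) $\int_\W\delta^{-1+2\eps}\,dx = O(\eps^{-1})$, which is the actual source of the announced blow-up as $\eps\to 0^+$. Care is also needed on the mixed portion $x\in\W$, $y\in\cW$ of the far zone: there $\nabla w_g(y) = 0$, but the weight $\delta(x,y)^{2\alpha}$ should be controlled by $\delta(x)^{2\alpha}$ (using $\delta(x,y)\le\delta(x)$) rather than by $|x-y|^{2\alpha}$, since the latter alternative would fail to integrate at infinity whenever $s\le 1/2$.
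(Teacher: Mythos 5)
Your overall plan---estimate the weighted $L^2$ part, the weighted gradient part, and the full Gagliardo seminorm directly from the three pointwise bounds of Proposition~\ref{prop:Holder}, splitting the double integral at the scale $|x-y|=\delta(x)/2$---follows essentially the same route as the paper. The one structural difference is that you give a self-contained near/far argument for the whole Gagliardo seminorm, whereas the paper cites \cite[page 482]{AcostaBorthagaray} for the $\W\times\W$ part and only carries out by hand the outer-integral term $\calI_{\mathcal{O}}$ over $\W\times\cW$. The near-zone computation ($\delta(x,y)\approx\delta(x)$, integrand $\delta(x)^{-1-2\eps}|x-y|^{-n+4\eps}$, inner integral $\eps^{-1}\delta(x)^{4\eps}$, outer integral $\eps^{-2}$) is correct and is indeed the dominant source of the $\eps^{-1}$ in the final estimate.

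There is, however, a genuine gap in your far-zone bookkeeping. You bound $|\nabla w_g(x)-\nabla w_g(y)|^2 \le C\bigl(\delta(x)^{2s-2}+\delta(y)^{2s-2}\mathbf{1}_\W(y)\bigr)$ and then uniformly replace $\delta(x,y)^{2\alpha}$ by $\delta(x)^{2\alpha}$. For the second summand this cannot work when $s\le 1/2$: after the weight is frozen at $\delta(x)^{1-2\eps}$, the $y$-integrand still contains $\delta(y)^{2s-2}$, which is not locally integrable near $\partial\W$ when $2s-2\le -1$, and the far-zone restriction $|x-y|\ge\delta(x)/2$ does nothing to tame the $y\to\partial\W$ singularity. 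The claim that ``the inner integral produces $C(s-2\eps)^{-1}\delta(x)^{-2s+4\eps}$'' is therefore simply false for that summand. The repair is to use the other half of the definition, $\delta(x,y)\le\delta(y)$, on the $\delta(y)^{2s-2}$ term; this yields $\delta(y)^{2s-1-2\eps}$, which is locally integrable for every $\eps<s/2$. One must then check (e.g.\ by a triangle-inequality argument: if $x,y\in\W$ and $|x-y|<\delta(y)/4$ then $\delta(x)>\tfrac34\delta(y)>2|x-y|$, contradicting the far-zone condition $|x-y|\ge\delta(x)/2$) that the far zone forces $|x-y|\gtrsim\delta(y)$, so the $x$-integration still contributes $(s-2\eps)^{-1}\delta(y)^{-2(s-2\eps)}$ and the final bound is the expected $C(\W,s)\eps^{-2}$. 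Equivalently, one can invoke the $x\leftrightarrow y$ symmetry of the $\W\times\W$ Gagliardo integral to reduce both far-zone summands to the $\delta(x)^{2s-2}$ one and avoid the issue entirely. With either repair, the argument delivers the announced estimate.
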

\begin{proof}
We must first notice that, as mentioned before, the spaces of Definition~\ref{def:wfSobolev} (weighted fractional Sobolev spaces) do not require integrability of the derivatives of functions with respect to Lebesgue measure but with respect to $\delta^{2\alpha}(x) \diff x$. Since, in this case, $\alpha = 1/2-\eps >0$, this is a weaker condition, as it allows certain blow up of the derivatives near the boundary. Hence, for $s \in (1/2,1)$, the assertion follows from the stronger estimate \cite[\AJS{Proposition 3.16}]{AcostaBorthagaray}. \AJS{A direct estimate can also be obtained \rhn{with} the same arguments used to bound the term $\calI_{\mathcal{O}}$ defined below.}

In the case $s \in (0,1/2]$, \AJS{we begin by observing that $w_g \in C(\overline \W) \subset L^2_{1/2-\eps}(\W)$. In addition,} the middle term in estimate \eqref{eq:holder_linear} implies that
\[
  \| w_g \|_{H^1_{1/2-\eps}(\Omega)}^2 = \int_\Omega |\nabla w_g(x)|^2 \delta(x)^{1-2\eps} \diff x \leq C(\W,s)^2 \| g \|_{C^{\AJS{0,1-s}}(\overline \W)}^2 \int_\Omega \delta(x)^{-1+2(s-\eps)}\diff x,
\]
so that, by \cite[Remark 3.5]{AcostaBorthagaray} we obtain $w_g \in \tH^1_{1/2-\eps}(\Omega)$,
\(
  \AJS{| w_g |_{\tH^1_{1/2-\eps}(\Omega)}} \leq \frac{C(\W,s)}{\sqrt{\eps}} \| g \|_{C^{\AJS{0,1-s}}(\overline \W)}
\).

On the other hand, the last term in \eqref{eq:holder_linear}, and similar arguments to those elaborated in \cite[page 482]{AcostaBorthagaray} yield
\[
  \iint_{\W \times \W} \frac{ |\nabla w_g(x) - \nabla w_g(y)|^2}{|x-y|^{n+2s-4\varepsilon}} \delta(x,y)^{1-2\varepsilon} \diff x \diff y \leq \frac{C(\W,s)^2}{\eps^2} \| g \|_{C^{\AJS{0,1-s}}(\overline \W)}^2.
\]

It remains to treat a term of the form
\[
  \calI_\mathcal{O} = \int_\W |\nabla w_g(x) |^2 \int_{\cW} \frac1{|x-y|^{n+2s-4\eps}} \delta(x,y)^{1-2\eps} \diff y \diff x.
\]
Notice now that, for every $x \in \Omega$, integration in polar coordinates gives
\[
\int_{\cW} \frac1{|x-y|^{n+2s-4\eps}}  \diff y \le \int_{\compl{B(x,\delta(x))}} \frac1{|x-y|^{n+2s-4\eps}}  \diff y = \frac{\omega_{n-1}}{2(s-2\eps)} \, \delta(x)^{-2s+4\eps}.
\]
Therefore, we can bound
\[
  \calI_\mathcal{O} \leq C \int_\W |\nabla w_g(x)|^2 \delta(x)^{1-2\eps}\int_{\cW} \frac1{|x-y|^{n+2s-4\eps}}  \diff y \le C \int_\W |\nabla w_g(x)|^2 \delta(x)^{1-2s+2\eps}  \diff x,
\]
and because $\sup_{x \in \Omega} \delta(x)^{1-s} |\nabla w_g(x)| \le C \| g \|_{C^{\AJS{0,1-s}}(\overline \W)}$, we deduce that 
\[
\calI_\mathcal{O} \leq C \| g \|_{C^{\AJS{0,1-s}}(\overline \W)}^2 \int_\W \delta(x)^{-1+2\eps}  \diff x \le \frac{C}{\eps} \| g \|_{C^{\AJS{0,1-s}}(\overline \W)}^2,
\]
where we, again, used \cite[Remark 3.5]{AcostaBorthagaray} to bound the last integral.
\end{proof}

%--------------------------------------------------------------------------------
\subsection{The fractional obstacle problem: known results}
\label{sub:obstacle_known}
%--------------------------------------------------------------------------------

Let us now review the known results about the solution to the fractional obstacle problem \eqref{eq:obstacle}. First we remark that existence and uniqueness of a solution immediately follows from standard arguments, and that this solution is also the minimizer of the functional $\J$ over the set $\K$. Since this will be useful when dealing with approximation, it is now our intention to explore the equivalence of \eqref{eq:obstacle} with the complementarity system \eqref{eq:complementarity}. To do so, we first define the coincidence and non-coincidence sets, respectively, by
\[
  \Lambda = \left\{ x \in \W : u(x) = \chi(x) \right\}, \qquad N = \W \setminus \Lambda.
\]

\begin{proposition}[\eqref{eq:obstacle}$\implies$\eqref{eq:complementarity}]
\label{prop:equiv}
Let $\W$ be a bounded and Lipschitz domain that satisfies the exterior ball condition. Let $\chi \in C(\overline\W)$ satisfy $\chi \leq 0$ on $\partial\Omega$ and $f \in L^p(\Omega)$ for some $p>n/2s$. In this setting, the function $u \in \tHs$ that solves \eqref{eq:obstacle} satisfies $u \in C(\overline\W)$ as well as the complementarity conditions \eqref{eq:complementarity}.
\end{proposition}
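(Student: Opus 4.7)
The plan has two stages. Since the complementarity system follows from \eqref{eq:obstacle} by a classical variational-inequality argument as soon as $u$ is known to be continuous, the principal obstacle is to establish $u \in C(\overline\W)$.

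For continuity I would decompose $u = w_f + z$, where $w_f \in \tHs$ solves the linear Dirichlet problem $\Laps w_f = f$ in $\W$ with $w_f = 0$ in $\cW$. The hypothesis $p > n/(2s)$ places $f$ in a subcritical Lebesgue class, so the linear H\"older theory---interior estimates in the spirit of \cite{Silvestre} together with the boundary-up-to estimates of \cite{RosOtonSerra}, which exploit the exterior ball condition---gives $w_f \in C^{0,\alpha}(\overline \W)$ for some $\alpha>0$. The residual $z = u - w_f \in \tHs$ then solves an obstacle problem with zero forcing and continuous obstacle $\widetilde\chi := \chi - w_f$, still nonpositive on $\partial\W$. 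Interior continuity of $z$ follows from the interior H\"older regularity for the fractional obstacle problem \cite{CaffarelliSalsaSilvestre,Silvestre}: $z$ coincides with the continuous $\widetilde\chi$ on the coincidence set and is $s$-harmonic on its complement, where Lemma~\ref{lemma:balayage} yields smoothness. Continuity up to $\partial\W$ is extracted from the exterior ball condition via barriers, using $\widetilde\chi \leq 0 = z_{|\partial\W}$ to compare $z$ from above with a linear-problem barrier and from below with $\widetilde\chi^+$. This boundary step is the technical heart of the argument.

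Once $u \in C(\overline\W)$ is in hand, the complementarity follows in three short steps. First, for any $\varphi \in \tHs$ with $\varphi \geq 0$ a.e.\ one has $u + \varphi \in \K$; inserting $v = u + \varphi$ into \eqref{eq:obstacle} yields $\langle \lambda, \varphi\rangle \geq 0$, so $\lambda = \Laps u - f$ is a nonnegative distribution on $\W$. Second, the set $N = \{x \in \W : u(x) > \chi(x)\}$ is open by continuity, and for any $\varphi \in C_0^\infty(N)$ compactness of $\supp \varphi$ yields $m := \min_{\supp \varphi}(u-\chi) > 0$; hence $u \pm \varepsilon \varphi \in \K$ for $|\varepsilon|$ small enough, and testing \eqref{eq:obstacle} with both signs forces $\langle \lambda, \varphi\rangle = 0$. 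Thus $\lambda$ vanishes on $N$. Third, combining $\lambda \geq 0$ in $\W$, $\lambda = 0$ on $N$, $u - \chi \geq 0$ a.e.\ in $\W$, and $u - \chi = 0$ on $\Lambda = \W \setminus N$, we obtain $\min\{\lambda, u - \chi\} = 0$ a.e.\ in $\W$, which is precisely \eqref{eq:complementarity}.
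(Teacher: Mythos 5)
Your derivation of the complementarity conditions from continuity is exactly the paper's argument: test with $v = u + \varphi$ for $0\le\varphi\in C_0^\infty(\W)$ to get $\lambda\ge 0$, then test with $u\pm\varepsilon\varphi$ for $\varphi\in C_0^\infty(N)$ to get $\lambda=0$ on $N$. Where you diverge is in the proof of $u\in C(\overline\W)$, which in the paper is a one-line citation of \cite[Theorem 1.2]{MR3630405}; that theorem is stated precisely under the hypotheses appearing here ($\chi\in C(\overline\W)$ nonpositive on $\partial\W$, $f\in L^p$ with $p>n/2s$), which is why the proposition is formulated with these hypotheses and not the stronger $\F(\overline\W)$, $\calX(\overline\W)$ ones used elsewhere in the paper.

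Your attempted direct proof of continuity has a genuine gap in the interior step, not only the boundary one you flag. You invoke ``interior H\"older regularity for the fractional obstacle problem \cite{CaffarelliSalsaSilvestre,Silvestre}'' for $z$, but those results require the obstacle to be at least $C^{1,\gamma}$ (in fact $C^{1,1}$ or smoother), whereas here $\widetilde\chi = \chi - w_f$ is merely continuous. The backup reasoning you give --- that $z$ equals the continuous $\widetilde\chi$ on the coincidence set and is $s$-harmonic, hence smooth by Lemma~\ref{lemma:balayage}, on its complement --- is circular: for an a.e.-defined $\tHs$ function, the decomposition of $\W$ into an open non-coincidence set and a closed coincidence set on which $z=\widetilde\chi$ \emph{pointwise} is only available once continuity is already known, and continuity at the free boundary between the two regions does not follow from smoothness on each region separately. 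Establishing continuity of variational-inequality solutions with merely continuous obstacles genuinely requires a penalization or Lewy--Stampacchia type argument (this is the content of the cited \cite[Theorem 1.2]{MR3630405}), and the boundary barrier step you label the ``technical heart'' is left entirely unexecuted. So the continuity half of your proof is not complete; the complementarity half is fine and matches the paper.
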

\begin{proof}
Since $u \in \K$, then we have that $u-\chi \geq 0$ \mae $\Omega$. Let now $0 \leq \phii \in C_0^\infty(\Omega)$ and observe that the function $v = u + \phii \in \K$. This particular choice of test function in \eqref{eq:obstacle} implies that
\[
  (u,\phii)_s \geq \langle f, \phii\rangle
\]
and, using \eqref{eq:innerprodisduality} we conclude that
\[
  \left\langle \Laps u- f, \phii \right\rangle \geq 0, \quad \forall \phii \in C_0^\infty(\Omega), \ \phii \geq 0.
\]
In other words, $\lambda \geq 0$ in the sense of distributions.

On the other hand, according to \cite[Theorem 1.2]{MR3630405}, the assumptions imply that $u \in C(\overline\W)$ and, consequently, $N$ is an open set. Let $\phii \in C_0^\infty(N)$ and notice that, for a sufficiently small $\eps$ we have that $v = u \pm \eps \phii \in \K$. Using these test functions in \eqref{eq:obstacle} then implies that
\[
  \langle \lambda,\phii \rangle = 0, \quad \forall \phii \in C_0^\infty(N),
\]
as we intended to show.
\end{proof}

We will also make use of the following continuous dependence result.

\begin{lemma}[continuous dependence]
\label{lem:monotone}
Let $\chi \in L^\infty(\W)$, $f = 0$, and $u \in \tHs$ solve \eqref{eq:obstacle}. Then, we have that $u \in L^\infty(\W)$ with
\[
  \max\{\chi,0\} \leq u \leq \| \max\{\chi,0\} \|_{L^\infty(\W)} \ \mae \W.
\]
\end{lemma}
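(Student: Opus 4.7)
The plan is to test the variational inequality \eqref{eq:obstacle} (with $f=0$) against two carefully chosen truncations of $u$, and to extract $L^\infty$ bounds from pointwise Stampacchia-type inequalities for the nonlocal bilinear form $(\cdot,\cdot)_s$ of \eqref{eq:defofinnerprod}.

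For the lower bound, the inclusion $u\in\K$ already gives $u\ge\chi$ a.e., so it suffices to show $u\ge 0$. I would take $v:=u^{+}=\max\{u,0\}$ as competitor. This function lies in $\tHs$ because $|u^{+}(x)-u^{+}(y)|\le|u(x)-u(y)|$ pointwise and $u^{+}$ vanishes wherever $u$ does, and it lies in $\K$: on $\{u\ge 0\}$ one has $v=u\ge\chi$, whereas on $\{u<0\}$ one has $\chi\le u<0=v$. Since $f=0$ and $u-v=-u^{-}$, \eqref{eq:obstacle} reduces to $(u,u^{-})_{s}\ge 0$. A four-case analysis on the signs of $u(x)$ and $u(y)$ yields the pointwise inequality
\[
(u(x)-u(y))\bigl(u^{-}(x)-u^{-}(y)\bigr)\le -\bigl(u^{-}(x)-u^{-}(y)\bigr)^{2},
\]
and integrating against the kernel of \eqref{eq:defofinnerprod} gives $|u^{-}|_{\tHs}^{2}\le -(u,u^{-})_{s}\le 0$, forcing $u^{-}\equiv 0$.

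For the upper bound, set $M:=\|\max\{\chi,0\}\|_{L^\infty(\Omega)}\ge 0$ and test with $v:=\min\{u,M\}$. This competitor lies in $\tHs$ (since $M\ge 0$, $v=0$ outside $\Omega$) and in $\K$ (on $\{u\le M\}$, $v=u\ge\chi$; on $\{u>M\}$, $v=M\ge\max\{\chi,0\}\ge\chi$). Setting $\phi:=(u-M)^{+}=u-v$, \eqref{eq:obstacle} yields $(u,\phi)_{s}\le 0$. Another case analysis (this time on whether $u(x),u(y)$ exceed $M$) produces the pointwise bound
\[
\bigl(\phi(x)-\phi(y)\bigr)^{2}\le\bigl(u(x)-u(y)\bigr)\bigl(\phi(x)-\phi(y)\bigr),
\]
whence $|\phi|_{\tHs}^{2}\le(u,\phi)_{s}\le 0$, so $\phi\equiv 0$ and $u\le M$.

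The argument has no deep ingredient. The only items requiring some care are checking that the truncated competitors $u^{+}$ and $\min\{u,M\}$ actually lie in $\tHs\cap\K$—where the sign condition $M\ge 0$ is used to ensure the truncation still vanishes outside $\Omega$—and verifying the two pointwise truncation inequalities, which are elementary but require sign-bookkeeping across four cases.
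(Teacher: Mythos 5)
Your argument is correct. The paper itself does not give a proof: it simply cites \cite[Corollary 4.2]{MR3630405}, which establishes exactly this statement. What you supply is a self-contained Stampacchia-type truncation argument, and it is sound. For the lower bound, $u\in\K$ already gives $u\ge\chi$, and testing with $v=u^+\in\K\cap\tHs$ (noting $u^+$ vanishes in $\compl\Omega$ because $u$ does, and inherits $H^s$ membership from the pointwise contraction of the truncation) reduces the variational inequality to $(u,u^-)_s\ge0$; the pointwise inequality $(u(x)-u(y))(u^-(x)-u^-(y))\le-(u^-(x)-u^-(y))^2$, which follows from $(u^+(x)-u^+(y))(u^-(x)-u^-(y))\le0$ after splitting $u=u^+-u^-$, then forces $|u^-|_{\tHs}^2\le0$, hence $u^-\equiv0$. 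For the upper bound, testing with $v=\min\{u,M\}$ with $M=\|\max\{\chi,0\}\|_{L^\infty}\ge0$ is legitimate because $v\ge\chi$ on both $\{u\le M\}$ and $\{u>M\}$ and $M\ge0$ ensures $v$ still vanishes outside $\Omega$; the inequality $(\phi(x)-\phi(y))^2\le(u(x)-u(y))(\phi(x)-\phi(y))$ for $\phi=(u-M)^+$ is the standard monotone--Lipschitz truncation inequality, and it yields $\phi\equiv0$. The only items that genuinely warrant the care you note are the $\tHs$ membership of the truncations (which uses the characterization \eqref{eq:defoftHs}) and the sign bookkeeping in the two pointwise inequalities; you handle both correctly. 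Since the seminorm vanishing only gives that the truncations are constant, one should close by observing that a constant in $\tHs$ must be zero (as such functions vanish a.e.\ in $\compl\Omega$), a one-line remark you leave implicit but that is easily supplied.
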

\begin{proof}
See \cite[Corollary 4.2]{MR3630405}.
\end{proof}

Below we will introduce further assumptions on the data $f$ and $\chi$ that will allow us to apply the previous results.

%%%%%%%%%%%%%%%%%%%%%%%%%%%%%%%%%%%%%%%%%%%%%%%%%%%%%%%%%%%%%%%%%%%%%%%%%%%%%%%%%%
\section{Regularity}
\label{sec:regularity}
%%%%%%%%%%%%%%%%%%%%%%%%%%%%%%%%%%%%%%%%%%%%%%%%%%%%%%%%%%%%%%%%%%%%%%%%%%%%%%%%%%

Having established the existence of solution and its equivalent characterization as the solution of \eqref{eq:complementarity}, we now begin with the study of its regularity. \AJS{To do so, we must introduce some notation. 
% First, we will always assume that the obstacle $\chi$ is at least continuous and strictly negative near the boundary.
For 
a positive number $\kappa > 0$}
we \rhn{let $K_\kappa \in C^\infty(\rn)$ be a kernel so that}
\[
  K_\kappa(z) = \frac{C(n,s)}{|z|^{n+2s}}, \quad |z| \geq \kappa,
\]
and is extended smoothly for $|z|< \kappa$.

Finally, to concisely quantify the smoothness \AJS{assumptions on the right hand side $f$ and obstacle $\chi$ we introduce
\begin{equation}
\label{eq:defofcalF}
  \F(\overline\W) = \begin{dcases}
         C^{2,1-2s+\epsilon}(\overline\W), & s\in \left(0,\frac12\right], \\
         C^{1,2-2s+\epsilon}(\overline\W), & s \in \left(\frac12,1\right),
       \end{dcases}
  \qquad
  \calX(\overline\W) = \left\{ \chi \in C(\overline\W) : \chi_{|\pp\W} <0\right\}  \cap C^{2,1}(\W),
\end{equation}
where $\epsilon>0$ is sufficiently small, so that $1-2s+\epsilon$ is not an integer}.

%--------------------------------------------------------------------------------  
\subsection{Interior regularity}
\label{sub:interior_reg}
%--------------------------------------------------------------------------------

The interior regularity of the solution to \eqref{eq:obstacle} will follow from the regularity for the case $\W = \rn$ as detailed in \cite{CaffarelliSalsaSilvestre}. Let us first slightly extend the main result in that work.

\begin{lemma}[regularity in $\rn$]
\label{lem:regRn}
Let $u \in \widetilde{H}^s(\rn)$ solve \eqref{eq:obstacle} with $\W=\rn$. If $\chi \in \AJS{\calX(\rn)}$, $f \in \F(\rn)$, and $f$ is such that $|f(x)|\leq C|x|^{-\sigma}$ for some $\sigma > 2s$ as $|x|\to \infty$, then we have $u \in C^{1,s}(\rn)$.
\end{lemma}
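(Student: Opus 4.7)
The strategy is to reduce \eqref{eq:obstacle} in $\rn$ to the homogeneous case $f\equiv 0$ already handled by \cite{CaffarelliSalsaSilvestre}, by subtracting a particular solution of the linear equation. First I would construct $w\colon \rn\to\R$ with $\Laps w = f$ in $\rn$, bounded and vanishing at infinity. The decay $|f(x)|\le C|x|^{-\sigma}$, $\sigma>2s$, makes the Riesz potential representation
\[
  w(x) = c(n,s)\int_\rn \frac{f(y)}{|x-y|^{n-2s}}\diff y
\]
absolutely convergent (for $n>2s$; the low-dimensional case is obtained by a standard renormalization), and a direct estimate using the decay of $f$ together with the kernel gives $w(x)\to 0$ as $|x|\to\infty$.

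The next step is to upgrade the regularity of $w$ via the Schauder estimate for $\Laps$ on $\rn$, which gains $2s$ derivatives in H\"older scale provided the final exponent is noninteger. For either regime in the definition \eqref{eq:defofcalF} of $\F(\rn)$, the total H\"older order of $f$ equals $3-2s+\epsilon$; applying the $2s$-gain therefore yields $w\in C^{3,\epsilon}(\rn)$, the parameter $\epsilon$ in \eqref{eq:defofcalF} being precisely what prevents the target exponent from becoming integer. In particular $w\in C^{2,1}(\rn)\cap C^{1,s}(\rn)$.

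With $w$ in hand, define $\tilde u = u - w$ and $\tilde\chi = \chi - w$. Using linearity of $\Laps$ one verifies that $\tilde u\in \widetilde H^s(\rn)$ solves the homogeneous obstacle problem
\[
  \min\bigl\{\Laps \tilde u,\ \tilde u - \tilde\chi\bigr\} = 0 \quad\text{in } \rn,
\]
with a new obstacle $\tilde\chi\in C^{2,1}(\rn)$, since both $\chi\in\calX(\rn)\subset C^{2,1}(\rn)$ and $w\in C^{2,1}(\rn)$. Invoking the optimal regularity theorem of Caffarelli--Salsa--Silvestre gives $\tilde u\in C^{1,s}(\rn)$, and adding back $w\in C^{1,s}(\rn)$ yields $u\in C^{1,s}(\rn)$.

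The main obstacle I anticipate is verifying that the subtraction $\tilde\chi=\chi-w$ preserves the structural hypotheses underlying the Caffarelli--Salsa--Silvestre argument (size bound, compactness or decay of the contact set, sign of the obstacle at infinity); this is exactly where the decay of $f$ and the negativity of $\chi$ far from the origin conspire, via the decay of $w$, to transfer the relevant properties from $\chi$ to $\tilde\chi$. A secondary but genuine bookkeeping issue is matching the two regimes $s\le \tfrac12$ and $s>\tfrac12$ of the definition \eqref{eq:defofcalF} against the noninteger-exponent requirement of the fractional Schauder estimate, which is precisely the role of the small parameter $\epsilon$.
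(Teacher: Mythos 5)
Your proposal is essentially the paper's own proof: subtract the Riesz-potential solution $w_f$ of the linear problem (using the decay $|f(x)|\le C|x|^{-\sigma}$ to get boundedness), upgrade $w_f$ to $C^{3,\epsilon}(\rn)\subset C^{2,1}(\rn)$ via fractional Schauder estimates (the paper quotes \cite[Propositions 2.8 and 2.9]{Silvestre} for this, which is the same fact), and then apply \cite[Corollary 6.10]{CaffarelliSalsaSilvestre} to $u-w_f$ with the modified obstacle $\chi-w_f$. The concern you flag at the end about preserving the structural hypotheses on the obstacle is precisely the point the paper resolves by observing that $f\in\F(\rn)$ together with $w_f\in L^\infty(\rn)$ gives $\chi-w_f\in\calX(\rn)$, which you already have (indeed you prove the stronger statement that $w_f\to 0$ at infinity), so no genuine gap remains.
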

\begin{proof}
If $f = 0$, the assertion is the content of \cite[Corollary 6.10]{CaffarelliSalsaSilvestre}.
We now reduce the inhomogeneous case $f\ne0$ to the previous one by invoking the function $w_f$ defined, for $\W=\rn$, in \eqref{eq:Dirichlet}. Indeed, the function $U=u-w_f$ solves \eqref{eq:obstacle} with right hand side $f=0$ and obstacle $\chi-w_f$. Thus, to be able to invoke the reasoning for the homogeneous case, we must ensure that $\chi-w_f \in \AJS{\calX(\rn)}$. Since $\chi \in \AJS{\calX(\rn)}$ a sufficient condition for this, according to \cite[Propositions 2.8 and 2.9]{Silvestre}, is that $f \in \F(\rn)$ and $w_f \in L^\infty(\rn)$. To show the boundedness of $w_f$ we use its explicit representation
\[
  w_f(x) = C(n,-s) \int_\rn \frac{f(y)}{|x-y|^{n-2s}} \diff y;
\]
see \cite[Formula (1.1.12)]{Landkof} and \cite[Formula (2.3)]{Silvestre}.
Indeed, using the decay of $f$ we can estimate
\[
  |w_f(x)| \leq \| f \|_{L^\infty(B_R(x))} \int_{B_R}\frac1{|y|^{n-2s}} \diff y + C \int_{\compl{B_R }} \frac{|x+y|^{-\sigma}}{|y|^{n-2s}} \diff y \leq M.
\]
Since $w_f \in C^{3,\epsilon}(\rn) \subset C^{2,1}(\rn)$, we deduce $u = U+w_f \in C^{1,s}(\rn)$, and conclude the proof.
\end{proof}

With this result at hand we can establish the interior regularity of the solution to \eqref{eq:obstacle}. The idea is to use a direct localization argument. We point out that, for the fractional Laplacian a localization argument using the Caffarelli-Silvestre extension can be carried out, as described  in \cite[Section 2]{CaffarelliSalsaSilvestre}. Since for fractional Laplacians of order different than one half, the extension problem involves a degenerate elliptic equation with a weight that belongs to the Muckenhoupt class $A_2$ and depends only on the extended variable, this argument needs to combine fine estimates from \cite{FabesKenigSerapioni,MENGESHA2019184} with the translation invariance in the $x$-variable of the Caffarelli-Silvestre weight.

In this paper, instead, we pursue an entirely nonlocal approach. In that regard, the localization method we present in Proposition \ref{prop:localHolder} below can be applied also to more general nonlocal operators, such as those considered in \cite{CaffarelliRosOtonSerra}.
Finally, we stress that if $0\le\eta\le1$ is a smooth cut-off function such that $\eta=1$ in $\{ \chi > 0 \}$, then
\[
  \Laps (\eta u) \ne \eta \Laps u \quad \mbox{in } \{ \eta = 1 \}
\]
because of the nonlocal structure of $\Laps$. Consequently, we cannot deduce regularity of $\eta u$ directly from that of $\Laps u$. This is the difficulty we confront now.

\begin{remark}[Cauchy principal values]
\label{rem:valeur_principal}
At this point we must warn the reader about a technical aspect of our discussion. Namely, in what follows we will proceed formally and ``evaluate'' expressions of the form
\[
  \int_{\rn}\frac{w(y)}{|x-y|^{n+2s}} \diff y, \qquad \int_{\rn}\frac{w(x)-w(y)}{|x-y|^{n+2s}} \diff y,
\]
for some function $w:\rn \to \R$. Evidently, these integrals do not necessarily converge. We are doing this to avoid unnecessary technicalities, and what we mean in these cases is to compute the principal value of these integrals which, in the sense of distributions, is always meaningful. In other words, substitutions of the form
\[
  \int_{\rn}\frac{w(y)}{|x-y|^{n+2s}} \diff y \longleftrightarrow \lim_{\eps \downarrow 0} \int_{\rn \setminus B_\eps(x)}\frac{w(y)}{|x-y|^{n+2s}} \diff y
\]
need to be made below.
\eremk
\end{remark}

\AJS{As a final preparatory step we show that there is no loss of generality in assuming that the forcing term $f$ is zero, as the case $f\neq 0$ can be reduced to this one.

\begin{lemma}[reduction to $f = 0$]
\label{lem:reducefzero}
Let $f \in \F(\overline\W)$, $\chi \in \calX(\overline\W)$ and $u$ denote the solution to \eqref{eq:obstacle}. Then we have the representation
\[
  u = w_f + \widetilde{u},
\]
where $\widetilde{u}$ solves \eqref{eq:obstacle} with zero forcing ($f=0$) and obstacle $\widetilde \chi = \chi - w_f \in \calX(\overline\W)$.
\end{lemma}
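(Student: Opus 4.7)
The plan is to define $\widetilde u := u - w_f$, verify that $\widetilde u$ solves \eqref{eq:obstacle} with zero forcing and obstacle $\widetilde\chi := \chi - w_f$, and then check $\widetilde\chi \in \calX(\overline\W)$. The variational step is essentially the standard ``shift by the linear part'': writing $u$ as the sum of $w_f$ (which absorbs the inhomogeneity) and a residual, we expect that residual to solve an obstacle problem with zero right hand side and a modified obstacle.

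For the variational step I would pick an arbitrary
\[
  \widetilde v \in \widetilde \K := \{\widetilde v \in \tHs : \widetilde v \geq \widetilde\chi \text{ a.e.\ in } \W\},
\]
set $v := \widetilde v + w_f$ (which lies in $\K$), and insert $v$ into \eqref{eq:obstacle}. Since $u - v = \widetilde u - \widetilde v$, the left hand side expands as
\[
  (u, u-v)_s = (\widetilde u, \widetilde u - \widetilde v)_s + (w_f, \widetilde u - \widetilde v)_s .
\]
The identity $(w_f, \varphi)_s = \langle f, \varphi\rangle$ for every $\varphi \in \tHs$---which follows from \eqref{eq:innerprodisduality} combined with $\Laps w_f = f$ and density of $C_0^\infty(\W)$ in $\tHs$---turns the second summand into $\langle f, \widetilde u - \widetilde v\rangle$, and this cancels with the right hand side $\langle f, u-v\rangle = \langle f, \widetilde u - \widetilde v\rangle$ of \eqref{eq:obstacle}. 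We are left with $(\widetilde u, \widetilde u - \widetilde v)_s \leq 0$ for every $\widetilde v \in \widetilde\K$. Combined with $\widetilde u \in \tHs$ and the pointwise bound $\widetilde u \geq \widetilde\chi$ inherited from $u \geq \chi$, this is precisely \eqref{eq:obstacle} for $\widetilde u$ with $f = 0$ and obstacle $\widetilde\chi$.

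It remains to verify $\widetilde\chi \in \calX(\overline\W)$. Continuity on $\overline\W$ and negativity on $\partial\W$ come for free: $\chi \in C(\overline\W)$ by hypothesis and $w_f \in C^{0,s}(\overline\W)$ with $w_f|_{\partial\W} = 0$ by Proposition~\ref{prop:Holder}, so $\widetilde\chi \in C(\overline\W)$ and $\widetilde\chi|_{\partial\W} = \chi|_{\partial\W} < 0$. The step I expect to demand the most care is the interior regularity $\widetilde\chi \in C^{2,1}(\W)$, which---since $\chi \in C^{2,1}(\W)$ by assumption---reduces to $w_f \in C^{2,1}(\W)$. On any ball $B_r(x_0) \Subset \W$, $w_f$ satisfies $\Laps w_f = f$ with $f \in \F(\overline\W)$; the exponents in the definition of $\F$ are calibrated precisely so that interior H\"older--Schauder estimates for the fractional Laplacian (in the spirit of \cite[Propositions 2.8 and 2.9]{Silvestre}, the same estimates used in Lemma~\ref{lem:regRn} on $\rn$) upgrade $f$ to $w_f \in C^{3,\epsilon}(B_{r/2}(x_0)) \subset C^{2,1}(B_{r/2}(x_0))$. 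The subtlety is that those references are typically phrased on $\rn$, so to apply them on a bounded domain one must localize (via a cut-off) and control the nonlocal tail contribution; that tail is already bounded thanks to the global $C^{0,s}(\overline\W)$ estimate on $w_f$, which closes the argument.
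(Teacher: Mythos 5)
Your proof is correct, but it takes a genuinely different route from the paper's on both of its two steps. For the variational part, you argue at the level of the variational inequality: the affine change of variables $v \mapsto v + w_f$ maps $\widetilde\K$ into $\K$, and the identity $(w_f, \varphi)_s = \langle f, \varphi\rangle$ cancels the inhomogeneity, leaving $(\widetilde u, \widetilde u - \widetilde v)_s \leq 0$. The paper instead goes through the complementarity system of Proposition~\ref{prop:equiv}: it writes $(-\Delta)^s u = f + \lambda$, decomposes $u = w_f + w_\lambda$, and uses the nonlocal maximum principle (Proposition~\ref{prop:fractmaxprinciple}) with $\lambda \geq 0$ to conclude $w_\lambda = \widetilde u \geq 0$. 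Your route is more elementary (no appeal to the equivalence with the complementarity system), while the paper's route yields as a byproduct the pointwise inequality $u \geq w_f$, equivalently $\widetilde u \geq 0$, which is not part of the lemma's conclusion but is recorded at the end of the paper's proof. For the verification that $\widetilde\chi \in \calX(\overline\W)$, you localize the interior Schauder estimates of \cite{Silvestre} from $\rn$ to balls $B_r(x_0) \Subset \W$ and control the nonlocal tail; the paper instead cites the interior part of \cite[Proposition 1.4]{RosOtonSerra}, which is stated directly for bounded domains and gives $w_f \in C^{2,1}(\W)$ without any cut-off argument. Both are valid; the paper's citation is shorter, yours is more self-contained given the ingredients already developed for Lemma~\ref{lem:regRn}.
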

\begin{proof}
It is well--known that by introducing the Lagrange multiplier $\lambda \in H^{-s}(\Omega)$, we have that
\[
  (-\Delta)^s u = f + \lambda,
\]
(cf. \eqref{eq:complementarity}) and, therefore, $u = w_f + w_\lambda$
Since $\lambda \geq 0$, using Proposition~\ref{prop:fractmaxprinciple} (nonlocal maximum principle) we infer that $w_\lambda \geq 0$ and, consequently, $u \geq w_f$.

Now, since $f \in \F(\overline\W)$, Proposition~\ref{prop:Holder} (H\"older estimates for the linear problem) gives, in particular, that $w_f \in C^{0,s}(\overline \W)$ and thus, since $w_f = 0$ on $\partial \Omega$ and $\chi \in \calX(\overline\W)$ there exists $r>0$ such that
\[
  x \in \Omega_r = \left\{ x \in \overline\W: \dist(x,\pp\W)< r \right\}
\]
implies $w_f(x) > \chi (x)$. Notice then that, \rhn{in $\Omega_r$,} we have $u \geq w_f > \chi$.

Define $\widetilde \chi = \chi - w_f \in C(\overline\W)$ and note that the previous considerations also give us that $\widetilde \chi<0$ on $\pp \W$. Moreover, since $f \in \F(\overline\W)$, the conclusion of \cite[Proposition 1.4]{RosOtonSerra} gives that $w_f \in C^{2,1}(\W)$. Thus, $\widetilde \chi \in \calX(\overline\W)$.

It remains now to realize that if we define $\widetilde u = u - w_f$, \rhn{then $\widetilde u$} solves the following version of \eqref{eq:obstacle}
\[
  \min\left\{ (-\Delta)^s \widetilde u,  \widetilde u - \widetilde\chi\right\} = 0,
\]
and that, $\widetilde{u} \geq 0 > \widetilde\chi$ on $\Omega_r$.
\end{proof}

Note that the usefulness of the previous result lies in the fact that, in our setting, the regularity of $u$ can be deduced from the regularity of the linear problem, which was described in Section~\ref{sub:linear}, and that of an obstacle problem without forcing and with an obstacle that has the same regularity of the original obstacle $\chi$.

Owing to the reduction given above, from now on we consider only the case $f = 0$.}

\begin{proposition}[interior H\"older regularity]
\label{prop:localHolder}
Let $\W$ be a bounded Lipschitz domain \rhn{and} \AJS{$\chi \in \calX(\overline\W)$. Then the solution $u \in \tHs$ of \eqref{eq:obstacle} with $f=0$} satisfies $u \in C^{1,s}(\W)$.
\end{proposition}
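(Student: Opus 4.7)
My plan is to reduce Proposition~\ref{prop:localHolder} to Lemma~\ref{lem:regRn} (regularity in $\rn$) by constructing, around each interior point $x_0\in\W$, an obstacle problem posed on $\rn$ whose unique solution coincides with $u$ in a neighborhood of $x_0$. Lemma~\ref{lem:reducefzero} already reduces us to $f=0$, in which case Lemma~\ref{lem:monotone} yields $u\ge 0$. The central difficulty, as emphasized in the introduction, is that $\Laps$ fails to commute with multiplication by a cutoff, so the naive candidate $\eta u$ does not immediately satisfy an obstacle inequality with the original data. Quantifying and absorbing the resulting discrepancy is where the smoothed kernel $K_\kappa$ introduced before the proposition and the principal-value framework of Remark~\ref{rem:valeur_principal} enter.

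Fix $x_0\in\W$ and choose $R>0$ with $B_{3R}(x_0)\Subset\W$. Let $\eta\in C_0^\infty(\rn)$ satisfy $\eta\equiv 1$ on $B_{2R}(x_0)$, $\supp\eta\subset B_{3R}(x_0)$ and $0\le\eta\le 1$, and set $\bar u:=\eta u\in H^s(\rn)$, compactly supported in $\W$ and equal to $u$ on $B_{2R}(x_0)$. A direct rearrangement of the integral defining $\Laps$ gives, for $x\in B_{2R}(x_0)$,
\[
\Laps \bar u(x) = \Laps u(x) + \Phi(x), \qquad \Phi(x):=C(n,s)\int_{\rn}\frac{(1-\eta(y))\,u(y)}{|x-y|^{n+2s}}\diff y,
\]
in which the integral is honestly convergent (no principal value) since $(1-\eta)u$ vanishes on a neighborhood of such $x$. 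Differentiating under the integral sign yields $\Phi\in C^\infty(B_{2R}(x_0))$.

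Next I extend the obstacle by setting $\tilde\chi:=\eta\chi-M(1-\eta)\in C^{2,1}(\rn)$ for some $M\ge\|u\|_{L^\infty(\W)}+1$. Then $\tilde\chi=\chi$ on $B_{2R}(x_0)$, $\tilde\chi=-M$ outside $B_{3R}(x_0)$, and the identity $\bar u-\tilde\chi=\eta(u-\chi)+M(1-\eta)$ shows that $\bar u\ge\tilde\chi$ on $\rn$ with strict inequality wherever $\eta<1$. I claim $\bar u$ solves the global obstacle problem
\[
\min\{\Laps\bar u-\tilde f,\; \bar u-\tilde\chi\}=0 \quad \text{in } \rn,
\]
where $\tilde f:=\Phi$ on $B_{2R}(x_0)$ and $\tilde f:=\Laps\bar u$ on $\rn\setminus B_{2R}(x_0)$. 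Inside $B_{2R}(x_0)$ this complementarity is inherited from that of $u$: $\Laps\bar u-\tilde f=\Laps u\ge 0$ and $(\Laps u)(u-\chi)=0$; outside, $\bar u>\tilde\chi$ strictly forces $\Laps\bar u=\tilde f$ trivially by the definition of $\tilde f$.

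The main obstacle I foresee is verifying that $\tilde f\in\F(\rn)$ globally with the decay $|\tilde f(x)|\lesssim|x|^{-\sigma}$, $\sigma>2s$, demanded by Lemma~\ref{lem:regRn}. Smoothness of $\tilde f=\Phi$ on $B_{2R}(x_0)$ is immediate, and away from $\supp\bar u$ the explicit formula $\tilde f(x)=-C(n,s)\int\bar u(y)|x-y|^{-n-2s}\diff y$ gives a $C^\infty$ function decaying like $|x|^{-n-2s}$, well beyond the required rate. The delicate region is the annulus $B_{3R}(x_0)\setminus B_{2R}(x_0)$ together with matching across $\pp B_{2R}(x_0)$, where $\bar u=\eta u$ is only a priori continuous; this is exactly where the decomposition $\Laps=L_\kappa+M_\kappa$ based on the smoothed kernel $K_\kappa$ becomes essential. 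For $\kappa$ sufficiently small, the singular short-range part $M_\kappa\bar u$ depends only on values of $\bar u$ in a ball on which $\eta$ is already constant (so that cutoffs commute with $M_\kappa$ modulo lower-order terms), while the long-range operator $L_\kappa$, being a convolution with a smooth kernel, automatically produces a smooth output. Once this is used to verify $\tilde f\in\F(\rn)$ and $\tilde\chi\in\calX(\rn)$, Lemma~\ref{lem:regRn} yields $\bar u\in C^{1,s}(\rn)$; since $\bar u=u$ on $B_{2R}(x_0)$ and $x_0\in\W$ was arbitrary, $u\in C^{1,s}(\W)$.
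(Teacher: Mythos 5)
Your high-level strategy---cut off $u$ by a smooth $\eta$, interpret $\eta u$ as the solution of a whole-space obstacle problem, and invoke Lemma~\ref{lem:regRn}---matches the paper's, and your device of modifying the obstacle to $\tilde\chi=\eta\chi-M(1-\eta)$ is a tidy way to guarantee the global constraint. But the argument has a genuine gap exactly in the region you flag as delicate, and the proposed resolution does not close it. Because you localize around an \emph{arbitrary} $x_0\in\W$, the transition annulus $B_{3R}(x_0)\setminus B_{2R}(x_0)$ may well meet the coincidence set $\Lambda$. There $\Laps u=\lambda$ need not vanish, and a priori $u$ is only continuous, so $\tilde f:=\Laps \bar u$ is merely an $H^{-s}$ distribution, with no grounds to assert it is in $\F(\rn)$, nor that it matches $\Phi$ across $\partial B_{2R}(x_0)$ (the mismatch is precisely $\Laps u$, which is nonzero wherever $\Lambda$ touches $\partial B_{2R}(x_0)$). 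The sentence ``the singular short-range part $M_\kappa\bar u$ depends only on values of $\bar u$ in a ball on which $\eta$ is already constant'' is false for $x$ in the annulus: that annulus \emph{is} the transition region, $\eta$ is not locally constant there, and even where $\eta$ is locally constant the short-range operator still sees only $u$, whose regularity near $\partial\Lambda$ is exactly what is at stake.

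The fix, which is what the paper does, is to place the cutoff transition entirely inside $\{\chi<0\}$ so that $u$ is $s$-harmonic there. Since $\chi\in\calX(\overline\W)$ gives $\{\chi\ge 0\}\Subset\W$, and $u\ge\max\{\chi,0\}$ (Lemma~\ref{lem:monotone} with $f=0$) forces $\Lambda\subset\{\chi\ge0\}$, one may take $D\Subset\W$ with $\{\chi>0\}\Subset D$ (this costs nothing since the goal is $u\in C^{1,s}(D)$ for every such $D$) and choose $\eta\equiv 1$ on a neighborhood of $D$. Then on the transition set $\chi<0$ and $u\ge0>\chi$, so complementarity gives $\Laps u=0$ there; Lemma~\ref{lemma:balayage} upgrades $u$ to $C^\infty$ locally, hence $\eta u$ is $C^\infty$, and the identity $\Laps(\eta u)=\Laps\calE(\eta u)+K_{\tau/2}\star(\calE(\eta u)-\eta u)$ with a compactly supported smooth extension $\calE(\eta u)$ exhibits $\Laps(\eta u)$ as a manifestly smooth function there. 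With this choice all three pieces of the right-hand side are smooth, consistent on overlaps, and decay like $|x|^{-n-2s}$, so Lemma~\ref{lem:regRn} applies. In short: the localization must be global in the sense that the cutoff plateau contains the entire (compactly contained) coincidence set, not merely a ball around one point $x_0$.
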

\begin{proof}
% We may assume that $f = 0$, and thus $u\ge0$, because if $f\ne0$ the same reduction of Lemma \ref{lem:regRn} (regularity in $\rn$) applies. 
Let $D \Subset \W$ be open. Without loss of generality, we assume that $\{ \chi > 0 \} \Subset D$. Let, in addition, $\eta \in C_0^\infty(\W)$ be a smooth cutoff function such that
\[
  D \Subset \{ \eta \equiv 1 \}, \qquad \supp(\eta) \Subset \Omega, \qquad 0 \leq \eta \leq 1.
\]
Define $U = \eta u$. The objective is now to show that $U$ solves an obstacle problem with obstacle $\chi$ and a smooth right hand side $F$ with suitable decay at infinity, for if that is the case we can appeal to \AJS{Lemma~\ref{lem:regRn} (regularity in $\rn$)} to conclude that $U \in C^{1,s}(\rn)$. Since $U = u$ on $D$, the interior H\"older regularity of $u$ will follow.

We claim that $U \geq \chi$ in $\rn$. Indeed, if $\chi>0$ then $\eta = 1$ and $U=u\geq \chi$. On the other hand, since $0\leq \eta \leq 1$ we can multiply the inequality $u\geq \chi$ by $\eta$ to conclude that 
\[
  U = \eta u \geq \eta \chi,
\]
which, if $\chi \leq 0$, implies that $U \geq 0 \ge \chi$.

We now want to prove that there exists a smooth function $F$ such that $\Laps U \ge F$ and $\Laps U = F$ if $U > \chi$.
To accomplish this, we choose $\tau > 0$ sufficiently small so that
\[
  \dist\left( \supp(\eta), \pp\W \right) > 3 \tau, \qquad \dist\left( D, \pp\{\eta \equiv 1\} \right) > 3\tau.
\]
Finally, we let $\rn = A_1 \cup A_2 \cup A_3$, where
\begin{align*}
  A_1 &= \left\{ x \in \W:\ \dist(x,D) \leq 2\tau \right\}, \\
  A_2 &= \left\{ x \in \W:\ \dist(x,D)>\tau, \ \dist(x,\pp\W)>\tau  \right\}, \\
  A_3 &= \left\{ x \in \W:\ \dist(x,\pp\W) \leq 2\tau \right\} \cup \cW.
\end{align*}
We point out that this is not a partition, as the intersections $A_1 \cap A_2$ and $A_2 \cap A_3$ are not empty. However, since every point in $\rn$ is in the interior of at least one of the sets defined above, it suffices to show that the corresponding forcing is smooth on each of these sets separately.

\begin{enumerate}[$\bullet$]
  \item Let $x \in A_1$. Then, $\eta(y) = 1$ for all $y\in B_\tau(x)$ and we can write
  \[ \begin{aligned}
    \Laps U(x) & = \Laps u(x) + C(n,s)\int_{\rn} \frac{(1-\eta(y))u(y)}{|x-y|^{n+2s}} \diff y \\ 
   & = \Laps u(x) + \int_{\rn} \big[1-\eta(x-z)\big]u(x-z) K_\tau (z) \diff z,
  \end{aligned}\]
  where we use a fixed smooth kernel $K_\tau$ as described in the beginning of this section.
Using that $\Laps u(x) = \lambda (x) \geq 0$, we deduce that 
  \[
    \Laps U(x) \geq \left( K_{\tau} \star (1-\eta)u \right)(x).
  \]
  Moreover, if $x$ belongs to the non-coincidence set $N$, it follows that $\lambda(x) = 0$ and therefore
  \[
  \Laps U(x) = \left( K_{\tau} \star (1-\eta)u \right)(x).
  \]
  
  \item Given $x \in A_3$, we proceed essentially as before, except that now $\eta(y)= 0$ for all $y \in B_\tau(x)$, whence
  \[
    \Laps U(x) = C(n,s) \int_{\rn} \frac{-\eta(y)u(y)}{|x-y|^{n+2s}} \diff y = - \left( K_{\tau} \star \eta u\right)(x).
  \]
  
  \item Let $x \in A_2$. Since $\dist(x,\pp\W) > \tau$ and $\dist(x,\{\chi>0\})>\tau$, we deduce that $\chi < 0$ in $B_\tau(x)$ and, using Lemma~\ref{lem:monotone} (continuous dependence), that $u \in L^\infty(B_\tau(x))$ and
  \[
    \chi < 0 \leq u \ \mae B_\tau(x).
  \]
  From the complementarity conditions \eqref{eq:complementarity} it then follows that $u$ is $s$-harmonic in $B_\tau(x)$. Lemma~\ref{lemma:balayage} (balayage) implies that $u \in C^\infty(B_{\tau/2}(x))$, whence $U = \eta u \in C^\infty(B_{\tau/2}(x))$. Since this holds for every point in $A_2$, we deduce that $U \in C^\infty (A_2 + B_{\tau/2}(0))$, where $A_2 + B_{\tau/2}(0)$ is the Minkowski sum
  \[
  A_2 + B_{\tau/2}(0) = \{ x \in \rn \colon x = y + z, \quad y \in A_2, \ z \in  B_{\tau/2}(0) \}.
  \] 
  Let $\calE U \in C_0^\infty(\rn)$ be a smooth extension of $U$ outside $A_2 + B_{\tau/2}(0)$. Then, for $x \in A_2$ we can write    
 \[
    \Laps U(x) = C(n,s)\int_{\rn} \frac{\calE U(x) - U(y)}{|x-y|^{n+2s}} \diff y = \Laps \calE U(x) + C(n,s)\int_{\rn} \frac{\calE U(y) - U(y)}{|x-y|^{n+2s}} \diff y.
  \]
Since $\calE U(y) = U(y)$ for every $y \in B_{\tau/2} (x)$, we thus conclude
  \[
    \Laps U(x) = \Laps \calE U(x) + \left( K_{\tau/2} \star (\calE U - U) \right)(x).
  \]
\end{enumerate}

In order to gather the three cases considered above, we let %define the function $F \colon \rn \to \R$, 
\begin{equation} \label{eq:LapsUis}
F(x) = \begin{dcases}
                  \left( K_{\tau} \star (1-\eta)u \right)(x), & x \in A_1, \\
                  \Laps \calE (\eta u)(x) + \left( K_{\tau/2} \star (\calE (\eta u) - \eta u) \right)(x), & x \in A_2, \\
                  - \left( K_{\tau} \star \eta u\right)(x), & x \in A_3.
               \end{dcases}
\end{equation}
Observe that this expression is well defined. Indeed, on $A_2 \cap A_3$ both expressions coincide with $\Laps U$, while the fact that $A_1 \cap A_2 \subset N$ implies $\lambda=0$ and thus equality in this case. Therefore, we have defined a function $F\colon \rn \to \R$, which
% This is a well defined function, because in the set $A_1 \cap A_2$ (resp. in $A_2 \cap A_3$) the two expressions for $F$ coincide with $\Laps U$. Also, $F$ 
is smooth in $\rn$ because so are the kernels $K_{\tau}, K_{\tau/2}$ and the function $\calE (\eta u)$.

From the considerations above, we deduce that $U$ solves an obstacle problem posed in $\rn$, with obstacle $\chi$ and right hand side $F$.
In addition, $u$ is bounded according to Lemma~\ref{lem:monotone}. We can thus derive for $x \in A_3$
\[
  |(K_{\tau} \star \eta u)(x)| \leq C \int_\Omega \frac1{|x-y|^{n+2s}} \diff y \leq C \dist(x,\Omega)^{-n-2s} \quad \mbox{as } |x|\to \infty,
\]
which gives the decay required in Lemma~\ref{lem:regRn} (regularity on $\rn$). As a consequence, we can then invoke Lemma~\ref{lem:regRn} to conclude that $U \in C^{1,s}(\rn)$. This in turn implies $u \in C^{1,s}(D)$ as asserted and concludes the proof.
\end{proof}

\begin{remark}[interior regularity estimate]
\label{rem:kappa}
Notice that, from \eqref{eq:LapsUis}, one can establish an estimate of $|u|_{C^{1,s}(\overline D)}$ in terms of $f$, $\chi$ and, more importantly $\tau$, which, essentially, measures how close the set $\{\chi >0\}$ is to the boundary $\pp\W$.
\eremk
\end{remark}

An immediate consequence of the interior H\"older regularity is an interior Sobolev regularity estimate.

\begin{corollary}[interior Sobolev regularity]
\label{col:locSobolev}
In the setting of Proposition~\ref{prop:localHolder} we have that, for every $\eps>0$, the solution $u$ to \eqref{eq:obstacle} satisfies $u \in H^{1+s-\eps}_{loc}(\W)$ with the estimate
\[
  |u|_{H^{1+s-\eps}(D)} \leq \frac{C(n)|D|^{1/2}\diam(D)^{\eps}}{\eps^{1/2}} |u|_{C^{1,s}(\overline D)},
\]
where $D\Subset \Omega$ is any open set and $\diam(D)$ denotes the diameter of $D$.
\end{corollary}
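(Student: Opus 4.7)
The plan is to unravel the definition of $|u|_{H^{1+s-\eps}(D)}$, use the $C^{1,s}$ bound from Proposition~\ref{prop:localHolder} to control each difference quotient of $\partial^\beta u$, and then bound the resulting integral of $|x-y|^{2\eps-n}$ by polar coordinates, which is where the factor $\eps^{-1}$ appears.

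More concretely, writing $1+s-\eps = 1 + (s-\eps)$ with $s-\eps \in (0,1)$ (reducing $\eps$ if necessary so that $s-\eps > 0$), Definition~\ref{def:wfSobolev} (with $\alpha=0$, where $\delta$-weighted and unweighted seminorms on the subdomain $D\Subset\Omega$ behave identically up to universal constants involving $\dist(D,\pp\W)$, but since the statement is for the unweighted $H^{1+s-\eps}(D)$ we work directly) gives
\[
  |u|_{H^{1+s-\eps}(D)}^2 = \sum_{|\beta|=1} \iint_{D\times D} \frac{|\partial^\beta u(x)-\partial^\beta u(y)|^2}{|x-y|^{n+2(s-\eps)}} \, dx\, dy.
\]
Because $u \in C^{1,s}(\overline D)$ by Proposition~\ref{prop:localHolder}, each first derivative satisfies $|\partial^\beta u(x)-\partial^\beta u(y)| \le |u|_{C^{1,s}(\overline D)} |x-y|^s$. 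Substituting this into the Gagliardo seminorm yields
\[
  |u|_{H^{1+s-\eps}(D)}^2 \le n\, |u|_{C^{1,s}(\overline D)}^2 \iint_{D\times D} \frac{dx\,dy}{|x-y|^{n-2\eps}}.
\]

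For the remaining double integral, I would fix $x \in D$ and bound the inner integral by enlarging $D$ to $B_{\diam(D)}(x)$, so that polar coordinates give
\[
  \int_D \frac{dy}{|x-y|^{n-2\eps}} \le \omega_{n-1}\int_0^{\diam(D)} r^{2\eps-1}\, dr = \frac{\omega_{n-1}}{2\eps}\diam(D)^{2\eps}.
\]
Integrating in $x$ over $D$ produces an extra factor of $|D|$, and taking square roots gives the asserted bound with $C(n)=\sqrt{n\omega_{n-1}/2}$. The factor $\eps^{-1/2}$ reflects the fact that the estimate degenerates as $\eps \downarrow 0$, which is consistent with the sharp interior Hölder regularity being $C^{1,s}$ but not higher.

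There is no serious obstacle here: once Proposition~\ref{prop:localHolder} is in hand, the proof is a direct Hölder-to-Sobolev embedding computation on a bounded set, and the only minor point is keeping track of the $\eps^{-1}$ blowup of $\int_0^R r^{2\eps-1}\, dr$. The statement $u \in H^{1+s-\eps}_{loc}(\W)$ then follows by applying the bound to an arbitrary $D\Subset\Omega$.
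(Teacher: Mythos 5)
Your proof is correct and follows essentially the same route as the paper: bound the Gaussian--Gagliardo seminorm of $\nabla u$ by the $C^{1,s}$ H\"older estimate from Proposition~\ref{prop:localHolder}, then evaluate $\iint_{D\times D}|x-y|^{2\eps-n}\,dx\,dy$ in polar coordinates to produce the $|D|\diam(D)^{2\eps}/(2\eps)$ factor. The only cosmetic difference is that you sum over individual partials while the paper writes the gradient as a vector, which changes the constant $C(n)$ but not the substance.
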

\begin{proof}
For $x,y \in D \Subset \W$ Proposition~\ref{prop:localHolder} (interior H\"older regularity) implies the bound 
\[
  |\nabla u(x) - \nabla u(y)| \leq |u|_{C^{1,s}(\overline D)} |x-y|^s.
\]
This bound, together with integration in polar coordinates, allow us to estimate directly the requisite seminorm as follows:
\begin{align*}
  |u|_{H^{1+s-\eps}(D)}^2 &= \iint_{D \times D} \frac{|\nabla u(x) - \nabla u(y)|^2}{|x-y|^{n+2s-2\eps} }\diff y \diff x
    \leq |u|_{C^{1,s}(\overline D)}^2 \iint_{D \times D} \frac1{|x-y|^{n-2\eps}} \diff y \diff x \\
    &\leq |u|_{C^{1,s}(\overline D)}^2 \omega_{n-1} |D| \int_0^{\diam(D)} \zeta^{-1+2\eps} \diff \zeta 
      = |u|_{C^{1,s}(\overline D)}^2 \omega_{n-1} |D| \frac{ \diam(D)^{2\eps} }{2\eps}.
\end{align*}
This is the asserted estimate.
\end{proof}

%--------------------------------------------------------------------------------
\subsection{Boundary regularity}
\label{sub:Bdry_reg}
%--------------------------------------------------------------------------------

Let us now study the behavior of the solution to \eqref{eq:obstacle} near the boundary of the domain $\pp\W$. It is here that the weighted Sobolev spaces introduced in Definition~\ref{def:wfSobolev} (weighted fractional Sobolev spaces) shall become important. We begin by recalling that we assume the obstacle $\chi$ to be a smooth function that is negative in a neighborhood of the boundary $\pp\W$. In other words, we have
\begin{equation}
\label{eq:defofvarrho}
  \varrho = \dist\left( \{ \chi>0\}, \pp \W \right) > 0.
\end{equation}
In the spirit to Remark~\ref{rem:kappa} (interior regularity estimate), the regularity estimates near the boundary will depend on $\varrho$. We now choose $\tau \in (0, \varrho/5)$ and define a boundary layer $\calB_\tau$ of width $\tau$, \ie
\begin{equation}
\label{eq:defoflayer}
  \calB_\tau = \left\{ x \in \overline\W : \dist(x,\pp\W) < \tau \right\}.
\end{equation}
Let $\eta \in C^\infty(\rn)$ be a smooth cutoff function such that
\[
  0 \leq \eta \leq 1, \quad \eta(x) = 1 \ \forall x \in \calB_{4\tau}, \quad \dist(\supp(\eta),\{\chi>0\}) > \tau.
\]
We finally set $N_\eta = \{\eta>0\}$.

Having introduced all the necessary notation, we proceed to establish the boundary regularity of $u$.

\begin{proposition}[boundary H\"older regularity of $\Laps (\eta u)$]
\label{prop:bdryHolder}
Let \AJS{$\chi \in \calX(\overline\W)$} and $f = 0$. With the notation introduced above, the function $\Laps (\eta u)$ is smooth in $N_\eta$. In particular, it holds that
\begin{equation} \label{eq:bdryHolder}
\|\Laps (\eta u) \|_{C^{\AJS{0,1-s}}(\overline{N_\eta})} \le C(\| u \|_{C^{1,s}(\W \setminus  \overline{\calB_{\tau}})}, \chi, s, n,\Omega, \varrho).
\end{equation}
\end{proposition}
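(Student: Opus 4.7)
I would prove the proposition by exploiting two structural facts: $(i)$ the complementarity $\Laps u = \lambda$ together with the nondegeneracy $\chi\in\calX(\overline\W)$ implies that $\lambda$ is supported in a compact subset of $\W$ lying at positive distance from $\partial\W$; and $(ii)$ the cutoff $\eta$ equals $1$ on the layer $\calB_{4\tau}$, so that $(1-\eta)u$ has its support strictly inside $\W$ at distance $\geq 3\tau$ from $\calB_\tau$. The plan is then to split $N_\eta$ into the boundary region $N_\eta\cap\calB_\tau$ and the interior region $N_\eta\setminus\calB_\tau$, and to estimate $\Laps(\eta u)$ on each.

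\emph{Boundary region.} On $\calB_\tau$, since $\eta\equiv 1$, the decomposition $\eta u = u - (1-\eta)u$ formally gives
\[
\Laps(\eta u)(x) = \Laps u(x) + C(n,s)\int_{\W\setminus\calB_{4\tau}}\frac{(1-\eta(y))u(y)}{|x-y|^{n+2s}}\,\diff y.
\]
By Lemma~\ref{lem:monotone} (continuous dependence) we have $u\geq 0$, hence $\Lambda\subset\{\chi\geq 0\}\Subset\W$, which by the choice $\tau<\varrho/5$ guarantees $\Lambda\cap\calB_\tau=\emptyset$ and that $\Laps u = 0$ distributionally on $\calB_\tau$. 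The remaining integral is an analytic function of $x\in\calB_\tau$ because $|x-y|\geq 3\tau$ on its domain and $u\in L^\infty(\W)$; this gives smoothness of $\Laps(\eta u)$ on $\calB_\tau$ with explicit bounds in terms of $\tau$, $\|u\|_{L^\infty}$ and $\W$. (Strictly speaking, since $u$ is only $C^{0,s}$ up to $\partial\W$, the identity should be read distributionally on $\calB_\tau$ and then promoted to pointwise via the observation that the right-hand side is smooth.)

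\emph{Interior region.} On $N_\eta\setminus\calB_\tau$ we are at distance $\geq\tau$ from $\partial\W$, so Proposition~\ref{prop:localHolder} gives $u\in C^{1,s}$ in a neighborhood of $x$. I use the commutator identity
\[
\Laps(\eta u)(x) = \eta(x)\Laps u(x) + C(n,s)\,\mbox{p.v.}\!\int_{\rn}\frac{(\eta(x)-\eta(y))u(y)}{|x-y|^{n+2s}}\,\diff y,
\]
in which $\Laps u(x)=0$ pointwise because $x$ lies in the open set $N\setminus\Lambda\supset\supp(\eta)$ on which $u$ is $s$-harmonic and smooth enough for pointwise evaluation. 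For the commutator integral, the smoothness $|\eta(x)-\eta(y)|\lesssim\min\{1,|x-y|\}$ combined with a near-field/far-field split at radius $\tau/2$ gives a bounded contribution: the near-field uses $\eta\in C^\infty$ and the local $C^{1,s}$ regularity of $u$ (plus symmetry to cancel the mildly singular piece when $s\geq 1/2$), while the far-field uses $\|u\|_{L^\infty}$ and the kernel bound $|x-y|^{-n-2s}\leq(\tau/2)^{-n-2s}$.

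\emph{Main obstacle and conclusion.} The two regimes overlap on a neighborhood of $\partial\calB_\tau$, and by distributional uniqueness they agree there, yielding $\Laps(\eta u)\in C^\infty(N_\eta\cap\calB_\tau)\cap C^\infty(N_\eta\setminus\overline{\calB_\tau})$. Patching Hölder estimates for the difference quotient of $\Laps(\eta u)$ between points in different regimes, using the common control by $\|u\|_{C^{1,s}(\W\setminus\overline{\calB_\tau})}$ and $\|u\|_{L^\infty(\W)}$, yields the $C^{0,1-s}(\overline{N_\eta})$ bound. The main obstacle is precisely this transition: near $\partial\W$ we cannot invoke pointwise $\Laps u(x)=0$ since $u$ is only $C^{0,s}$ up to the boundary, so the argument must rely on the fact that $\eta\equiv 1$ near $\partial\W$ and hence the singular part of $\Laps(\eta u)$ there agrees with that of $\Laps u$, which vanishes distributionally because $\lambda$ is supported away from $\partial\W$.
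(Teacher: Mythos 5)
Your boundary-layer argument is essentially the paper's: since $\eta\equiv1$ on $\calB_{4\tau}$ and $\lambda=\Laps u=0$ there, $\Laps(\eta u)$ reduces to a far-field integral $K_\tau\star(1-\eta)u$, which is smooth. For the interior region you take a genuinely different route. The paper introduces a smooth, compactly supported extension $\calE U$ of $U=\eta u$ (using the balayage smoothness of $u$ near each interior point) and writes $\Laps U(x)=\Laps\calE U(x)+\bigl(K_{\tau/2}\star(\calE U-U)\bigr)(x)$, so that the $C^{0,1-s}$ modulus of $\Laps U$ follows directly from the classical Schauder bound $\|\Laps w\|_{C^{0,1-s}(\rn)}\lesssim\|w\|_{C^{1,s}(\rn)}$ applied to $\calE U$, plus the smoothness of the mollified far-field term. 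You instead use the commutator identity $\Laps(\eta u)=\eta\Laps u+[\Laps,\eta]u$ with $\Laps u=0$, reducing to an estimate for the commutator. This is a valid alternative, but your ``near-field/far-field split gives a bounded contribution'' together with ``patching Hölder estimates \dots yields the $C^{0,1-s}$ bound'' elides precisely the step that the paper isolates cleanly via the extension: to get the $C^{0,1-s}$ modulus (not just boundedness) of $\mbox{p.v.}\!\int\frac{(\eta(x)-\eta(y))u(y)}{|x-y|^{n+2s}}\diff y$ one must exploit the odd cancellation of $\nabla\eta(x)\cdot z\,u(x)$ and the local $C^{1,s}$ bound on $u$ near $x$, essentially rederiving the Schauder estimate the paper cites. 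Finally, a small structural point: your two regions $N_\eta\cap\calB_\tau$ and $N_\eta\setminus\calB_\tau$ are disjoint, so the ``overlap'' you invoke to patch estimates on $\overline{N_\eta}$ is not present as written; the paper uses the overlapping pair $\calB_{3\tau}$ and $N_\eta\setminus\calB_{2\tau}$ precisely to make that patching legitimate, and you should do the same.
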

\begin{proof}
We proceed as in the proof of Proposition~\ref{prop:localHolder} (interior H\"older regularity): we define $U= \eta u$, consider separately two overlapping sets $\calB_{3\tau}$ and $N_\eta\setminus\calB_{2\tau}$ and argue on each of these.

\begin{enumerate}[$\bullet$]
  \item Let $x \in \calB_{3\tau}$. Since $B_{\tau}(x) \cap \Omega\subset\calB_{4\tau}$, we have $\eta(y) = 1$ for all $y\in B_{\tau}(x)\cap \Omega$ and we can write
  \[
    \Laps U(x) = \Laps u(x) + C(n,s) \int_{\rn} \frac{(1-\eta(y))u(y)}{|x-y|^{n+2s}} \diff y.
  \]
We resort to Lemma~\ref{lem:monotone} (continuous dependence) once again to see that $u\ge 0 >\chi$ in $\calB_{3\tau}$, whence the complementarity conditions \eqref{eq:complementarity} imply that $\lambda(x) = \Laps u(x) = 0$. We deduce that
  \[
    \Laps U(x) = \left( K_{\tau} \star (1-\eta)u \right)(x) \quad \forall ~x\in \calB_{3\tau},
  \]
and therefore
  \[
    \|\Laps U\|_{C^{\AJS{0,1-s}}(\overline{\calB_{3\tau}})} \le \| K_{\tau} \|_{C^{\AJS{0,1-s}}(\rn)} \| (1-\eta)u \|_{L^1(\rn)} \leq C(\varrho, \chi, u).
  \]
  
\item Given $x \in N_\eta \setminus \calB_{2\tau}$, we still have that $\lambda =  \Laps u = 0$ in $B_\tau(x)$. Consequently, we can proceed as in the case $x \in A_2$ in the proof of Proposition~\ref{prop:localHolder} (interior H\"older regularity) to deduce that $\Laps U$ is smooth in $(N_\eta \setminus \calB_{2\tau}) + B_{\tau/2}(0)$. In fact, we construct a smooth extension $\calE U$ outside $(N_\eta \setminus \calB_{2\tau}) + B_{\tau/2}(0)$ that vanishes in $\compl{[(N_\eta \setminus \calB_{2\tau}) + B_{\tau}(0)]}$ to get
  \[
    \Laps U(x) = \Laps \calE U(x) + \left( K_{\tau/2} \star (\calE U - U) \right)(x) \quad\forall ~x \in N_\eta \setminus \calB_{2\tau},
  \]
 whence
  \begin{align*}
    \|\Laps U\|_{C^{\AJS{0,1-s}}(\overline{N_\eta \setminus \calB_{2\tau}})} \le \|\Laps \calE U\|_{C^{\AJS{0,1-s}}(\overline{N_\eta \setminus \calB_{2\tau}})} + 
      \| K_{\tau/2} \star (\calE U - U)\|_{C^{\AJS{0,1-s}}(\overline{N_\eta \setminus \calB_{2\tau}})}.
  \end{align*}
We next exploit that the extension satisfies $\calE U \equiv 0$ in  $\compl{[(N_\eta \setminus \calB_{2\tau}) + B_{\tau}(0)]}$, and so vanishes in $\calB_{\tau} \cup \compl{\Omega}$, to realize that
\[
 \|\Laps \calE U\|_{C^{\AJS{0,1-s}}(\overline{N_\eta \setminus \calB_{\tau}})}  \le C(s)  \| \calE U\|_{C^{1,s}(\rn)} \leq C(s) \| u \|_{C^{1,s}(\Omega \setminus \overline{\calB_{\tau}})}.
 \]
Moreover, since
\[
 \| K_{\tau/2} \star (\calE U - U)\|_{C^{\AJS{0,1-s}}(\overline{N_\eta \setminus \calB_{2\tau}})} \le C(\tau, \chi, u),
\]we deduce
\[
 \|\Laps U\|_{C^{\AJS{0,1-s}}(\overline{N_\eta \setminus \calB_{\tau}})} \le C(\| u \|_{C^{1,s}(\W \setminus \overline{\calB_{\tau}})}, \varrho, \chi).
 \]
\end{enumerate}
Combining the above H\"older estimates with the fact that $\calB_{3\tau}$ and $N_\eta\setminus\calB_{2\tau}$ overlap, \eqref{eq:bdryHolder} follows.
\end{proof}

The following simple argument reveals that the boundary behavior of $u$ coincides with that of linear equations. Let $g = \Laps (\eta u)$ and notice that, in $N_\eta$, the function $\eta u$ coincides with the solution $w_g$ of
\[
  \AJS{\Laps w_g = g, \ \text{in } N_\eta, \qquad w_g = 0, \ \text{in } \compl{N_\eta}}.
\]
We employ this relation to derive first a H\"older estimate and next a Sobolev estimate.  
We recall that $\varrho$ is defined in \eqref{eq:defofvarrho} and $\calB_\tau$ in \eqref{eq:defoflayer}.

\begin{corollary}[boundary H\"older regularity]
Let $\W$ be a bounded Lipschitz domain satisfying the exterior ball condition, and   
let $u \in \tHs$ solve \eqref{eq:obstacle} with \AJS{$\chi \in \calX(\overline\W)$} and $f = 0$. Then
\begin{equation} \label{eq:pointwise_bdry}
\|u\|_{C^\AJS{0,s}(\overline{N_\eta})} + 
\sup_{x \in \calB_{\tau}} \delta(x)^{1-s} |\nabla u(x)|\leq C(\| u \|_{C^{1,s}(\W \setminus  \overline{\calB_\tau})}, \chi, s, n,\Omega, \varrho).
\end{equation}
\end{corollary}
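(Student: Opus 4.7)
The plan is to reduce the statement to an application of Proposition~\ref{prop:Holder} to the linear Dirichlet problem satisfied by $\eta u$ on $N_\eta$. As noted in the paragraph preceding the corollary, setting $g = \Laps(\eta u)$, the function $\eta u$ coincides on $N_\eta$ with the solution $w_g$ of the linear problem $\Laps w_g = g$ in $N_\eta$ with zero exterior data. Moreover, Proposition~\ref{prop:bdryHolder} gives the bound $\|g\|_{C^{0,1-s}(\overline{N_\eta})} \leq C(\|u\|_{C^{1,s}(\W \setminus \overline{\calB_\tau})}, \chi, s, n, \Omega, \varrho)$, so it remains only to push this into pointwise estimates on $u$.

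Before invoking Proposition~\ref{prop:Holder}, I would choose $\eta$ so that $N_\eta$ is a bounded Lipschitz domain satisfying the exterior ball condition: near $\partial\W$ this is automatic, since $\eta \equiv 1$ on $\calB_{4\tau}$ forces $\partial N_\eta$ to coincide with $\partial\W$ there, and in the interior it suffices to take $0$ to be a regular value of $\eta$. With this choice, for any $x \in \calB_\tau$ the nearest point of $\partial N_\eta$ lies on $\partial\W$, so $\dist(x, \partial N_\eta) = \delta(x)$. Applying Proposition~\ref{prop:Holder} on $N_\eta$ to $w_g$ then yields
\[
\|w_g\|_{C^{0,s}(\overline{N_\eta})} + \sup_{x \in N_\eta} \dist(x,\partial N_\eta)^{1-s} |\nabla w_g(x)| \leq C(N_\eta, s) \|g\|_{C^{0,1-s}(\overline{N_\eta})}.
\]
Since $u = w_g$ throughout $\calB_{4\tau} \supset \calB_\tau$ and the two distance functions coincide on $\calB_\tau$, the gradient supremum in \eqref{eq:pointwise_bdry} is controlled at once. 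For the H\"older norm on $\overline{N_\eta}$, the identity $u = w_g$ again handles $\calB_{4\tau}$, while on $N_\eta \setminus \calB_{4\tau} \subset \W \setminus \overline{\calB_\tau}$ one has $\|u\|_{C^{0,s}} \leq C \|u\|_{C^{1,s}(\W \setminus \overline{\calB_\tau})}$; a routine patching over the overlap of these two regions then closes the argument.

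The main, and mild, obstacle is the geometric setup: ensuring that $N_\eta$ qualifies as a domain to which Proposition~\ref{prop:Holder} applies and that the intrinsic distance $\dist(\cdot, \partial N_\eta)$ matches $\delta$ in the boundary layer $\calB_\tau$ where the gradient bound is asserted. Both can be arranged by an appropriate choice of the cutoff $\eta$, and no further interaction between the obstacle problem and the linear theory is required beyond the identification $\eta u = w_g$ on $N_\eta$.
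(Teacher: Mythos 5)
Your proposal is correct and follows the same path as the paper: identify $\eta u$ with the solution $w_g$ of the linear problem on $N_\eta$, invoke Proposition~\ref{prop:bdryHolder} to control $g = \Laps(\eta u)$ in $C^{0,1-s}(\overline{N_\eta})$, and then apply Proposition~\ref{prop:Holder} on $N_\eta$. The extra care you take — choosing $\eta$ so that $N_\eta$ qualifies as a Lipschitz domain with exterior ball condition and so that $\dist(\cdot,\partial N_\eta)=\delta(\cdot)$ on $\calB_\tau$, plus the patching argument to pass from the $\eta u$-bound to the $u$-bound on $\overline{N_\eta}$ — is exactly what the paper's terse one-sentence proof leaves implicit, so this is a faithful (and slightly more explicit) reconstruction of the intended argument.
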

\begin{proof}
Since $g \in C^{\AJS{0,1-s}}(\overline{N_\eta})$ according to Proposition \ref{prop:bdryHolder} (boundary H\"older regularity), we can apply Proposition \ref{prop:Holder} (H\"older estimates for the linear problem) to deduce \eqref{eq:pointwise_bdry}.
\end{proof}

\begin{corollary}[boundary weighted Sobolev regularity]
\label{thm:bdyrreg}
Let $\W$ be a bounded Lipschitz domain satisfying the exterior ball condition, and let \AJS{$\chi \in \calX(\overline\W)$}, $f = 0$, and $u \in \tHs$ solve \eqref{eq:obstacle}. Then, for every \AJS{$\eps\in(0,s/2)$}, we have that $u \in H^{1+s-2\eps}_{1/2-\eps}(\calB_\tau)$ with the estimate
\begin{equation} \label{eq:boundary_1}
  |u|_{H^{1+s-2\eps}_{1/2-\eps}(\calB_\tau)} \leq \frac{C(\| u \|_{C^{1,s}(\W \setminus  \overline{\calB_\tau})}, \chi, s, n,\Omega, \varrho)}{\eps},
\end{equation}
where the weight $\delta$ refers to $\dist(\cdot, \pp\W)$. Moreover, we have the estimate
\begin{equation}
\label{eq:boundary_2}
  \iint_{\calB_\tau \times \cW} \frac{| \nabla u(x)|^2}{|x-y|^{n+2s - 4\eps}} \delta(x,y)^{1-2\eps} \diff y \diff x \le  
    \frac{C(\| u \|_{C^{1,s}(\W \setminus  \overline{\calB_\tau})}, \chi, s, n,\Omega, \varrho)}{\eps^2}.
\end{equation}
\end{corollary}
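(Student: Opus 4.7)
The plan is to mirror the proof of Theorem~\ref{teo:weighted} (weighted regularity of $w_g$) by replacing $w_g$ with the cut-off function $U=\eta u$ and using Proposition~\ref{prop:bdryHolder} to supply the H\"older regularity of the right-hand side. The key observation is that $U$ satisfies a linear Dirichlet problem on the enlarged set $N_\eta$: since $\eta$ vanishes outside $N_\eta$, we have $U=0$ in $\compl{N_\eta}$, and setting $g:=\Laps U$, Proposition~\ref{prop:bdryHolder} gives $g\in C^{0,1-s}(\overline{N_\eta})$ with a constant of the form $C(\|u\|_{C^{1,s}(\W\setminus\overline{\calB_\tau})},\chi,s,n,\Omega,\varrho)$. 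I would choose $\eta$ with smooth support so that $N_\eta$ is a bounded Lipschitz domain satisfying the exterior ball condition and such that $\pp N_\eta$ coincides with $\pp\W$ on a neighborhood of $\calB_\tau$; since $\eta\equiv 1$ on $\calB_{4\tau}$ while $\pp N_\eta\setminus\pp\W$ stays at distance at least $3\tau$ from $\calB_\tau$, the intrinsic distance $\dist(\cdot,\pp N_\eta)$ agrees with $\delta(\cdot)=\dist(\cdot,\pp\W)$ throughout $\calB_\tau$.

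For estimate \eqref{eq:boundary_1}, I would apply Theorem~\ref{teo:weighted} on $N_\eta$ to the Dirichlet problem solved by $U$, obtaining
\[
\|U\|_{\tH^{1+s-2\eps}_{1/2-\eps}(N_\eta)}\;\le\; \frac{C(N_\eta,s)}{\eps}\,\|g\|_{C^{0,1-s}(\overline{N_\eta})}.
\]
Restricting to $\calB_\tau$, where $u=U$ pointwise and the weights coincide, gives the claimed bound for $|u|_{H^{1+s-2\eps}_{1/2-\eps}(\calB_\tau)}$. Alternatively, I can bypass invoking Theorem~\ref{teo:weighted} on $N_\eta$ and instead redo its proof directly on $\calB_\tau$, using the pointwise estimate \eqref{eq:pointwise_bdry} to control $\int_{\calB_\tau}|\nabla u|^2\delta^{1-2\eps}\,dx$, and combining Proposition~\ref{prop:Holder} applied to $U=w_g$ on $N_\eta$ with the identification $u=U$ on $\calB_\tau$ to control the Gagliardo double integral $\iint_{\calB_\tau\times\calB_\tau}|\nabla u(x)-\nabla u(y)|^2|x-y|^{-n-2s+4\eps}\delta(x,y)^{1-2\eps}\,dx\,dy$.

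For estimate \eqref{eq:boundary_2}, I would simply repeat the $\calI_{\mathcal{O}}$ calculation from the proof of Theorem~\ref{teo:weighted}. Since $\delta(x,y)\le\delta(x)$ for every $x\in\calB_\tau$, and since for such $x$ the ball $B(x,\delta(x))$ is contained in $\W$, integration in polar coordinates yields
\[
\int_{\cW}\frac{1}{|x-y|^{n+2s-4\eps}}\,dy\;\le\;\int_{\compl{B(x,\delta(x))}}\frac{dy}{|x-y|^{n+2s-4\eps}}\;=\;\frac{\omega_{n-1}}{2(s-2\eps)}\,\delta(x)^{-2s+4\eps}.
\]
Combining this with the pointwise bound $|\nabla u(x)|\le C\,\delta(x)^{s-1}$ from \eqref{eq:pointwise_bdry} and the weight bound $\delta(x,y)^{1-2\eps}\le\delta(x)^{1-2\eps}$ reduces the left-hand side of \eqref{eq:boundary_2} to a constant multiple of $\int_{\calB_\tau}\delta(x)^{-1+2\eps}\,dx$, which is controlled by $C/\eps$. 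Absorbing the factor $1/(s-2\eps)$ into the constant yields the stated bound.

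The main technical step I expect to require care is verifying that $N_\eta$ can be taken as a Lipschitz domain with exterior ball property, and that the weight $\dist(\cdot,\pp N_\eta)$ matches $\delta$ on $\calB_\tau$ so that Theorem~\ref{teo:weighted} transfers cleanly; this is where the careful design of $\eta$ (with $\eta\equiv 1$ on $\calB_{4\tau}$ and support a smooth collar of $\pp\W$) pays off. Everything else reduces to the bookkeeping already carried out in the proof of Theorem~\ref{teo:weighted}, with Proposition~\ref{prop:bdryHolder} providing the key H\"older input in place of the global hypothesis on $g$.
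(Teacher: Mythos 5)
Your proposal is correct and takes essentially the same route as the paper: both identify $U=\eta u$ with the solution $w_g$ of the linear Dirichlet problem on $N_\eta$, invoke Theorem~\ref{teo:weighted} together with the H\"older bound on $g$ from Proposition~\ref{prop:bdryHolder}, and use that $\dist(\cdot,\partial N_\eta)=\delta(\cdot)$ on $\calB_\tau$ to pass from $N_\eta$-weighted estimates to the $\calB_\tau$-statements. The only cosmetic difference is in \eqref{eq:boundary_2}: you re-run the $\calI_{\mathcal O}$ computation from scratch (using the polar-coordinate bound, the pointwise gradient decay \eqref{eq:pointwise_bdry}, and the weighted integrability of $\delta^{-1+2\eps}$), whereas the paper obtains \eqref{eq:boundary_2} in one line by restricting the $\rn\times\rn$ Gagliardo integral built into the $\tH^{1+s-2\eps}_{1/2-\eps}(N_\eta)$-seminorm of $\eta u$ to the block $\calB_\tau\times\cW$ (using $u\equiv 0$ on $\compl\Omega$); the two are equivalent, and your direct version in fact recovers the slightly sharper dependence $(s-2\eps)^{-1}\eps^{-1}$ rather than $\eps^{-2}$, at the cost of carrying the same implicit $(s-2\eps)^{-1}$ factor the paper already absorbs into its constants.
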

\begin{proof}
We apply Theorem~\ref{teo:weighted} (weighted regularity of $w_g$) to infer that $\eta u = w_g \in \tH^{1+s-2\eps}_{1/2-\eps}(N_\eta)$ with 
\[
\| \eta u \|_{\tH^{1+s-2\eps}_{1/2-\eps}(N_\eta)} \le \frac{C(\Omega,s)}{\eps} \|g\|_{C^{\AJS{0,1-s}}(\overline{N_\eta})} \le
\frac{C(\| u \|_{C^{1,s}(\W \setminus  \overline{\calB_\tau})}, \chi, s, n,\Omega, \varrho)}{\eps} .
\]
Notice that, in this estimate, the weight used to define the norm is the distance to $\pp N_\eta$. However, owing to the definition of $\calB_\tau$, we have that for all $x \in \calB_\tau$ this coincides with $\dist(x,\pp\W)$. In addition, since
$\eta \equiv 1$ on $\calB_\tau$ we can conclude that $u \in H^{1+s-2\eps}_{1/2-\eps}(\calB_\tau)$, with the corresponding estimate \eqref{eq:boundary_1}. 
Finally, recalling the definition of $\tH^{1+s-2\eps}_{1/2-\eps}(N_\eta)$ and restricting the integration to $\calB_\tau \times \cW$ instead of $\rn\times\rn$, the previous inequality yields \eqref{eq:boundary_2} because $u=0$ on $\compl{\Omega}$.
\end{proof}

%--------------------------------------------------------------------------------
\subsection{Global regularity}
\label{sub:globreg}
%--------------------------------------------------------------------------------
 
We are now in position to prove the global regularity of solutions of the fractional obstacle problem.

\begin{theorem}[global weighted Sobolev regularity for $f=0$] \label{thm:global_regularity}
Let $\W$ be a bounded Lipschitz domain satisfying the exterior ball condition, \AJS{$\chi \in \calX(\overline\W)$} satisfy \eqref{eq:defofvarrho}, and $f = 0$. Then, the solution $u \in \tHs$ of \eqref{eq:obstacle} satisfies $u \in \tH^{1+s-2\eps}_{1/2-\eps}(\W)$ for all \AJS{$\eps\in(0,s/2)$} with the estimate
\[
  |u|_{\tH^{1+s-2\eps}_{1/2-\eps}(\W)} \leq \frac{C(\chi, s, n,\Omega, \varrho)}{\eps},
\]
\AJS{\rhn{where} the constant in this estimate is independent of $\eps$}.
\end{theorem}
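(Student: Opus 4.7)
The plan is to combine the interior H\"older/Sobolev estimates of Proposition~\ref{prop:localHolder} and Corollary~\ref{col:locSobolev} with the boundary weighted Sobolev estimates of Corollary~\ref{thm:bdyrreg} by splitting $\W$ into the boundary layer $\calB_\tau$ (for a fixed $\tau \in (0,\varrho/5)$) and its interior complement $\W_\tau := \W\setminus\overline{\calB_\tau}$. First I would use Remark~\ref{rem:kappa} to upgrade the qualitative interior statement of Proposition~\ref{prop:localHolder} into a quantitative bound $\|u\|_{C^{1,s}(\overline{\W_{\tau/2}})}\le C(\chi,s,n,\W,\varrho)$, so that the constants of the form $C(\|u\|_{C^{1,s}(\W\setminus\overline{\calB_\tau})},\chi,s,n,\W,\varrho)$ appearing in Corollary~\ref{thm:bdyrreg} are absorbed into $C(\chi,s,n,\W,\varrho)$.

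Next, I would control the $H^1_{1/2-\eps}(\W)$-part of $\|u\|_{\tH^{1+s-2\eps}_{1/2-\eps}(\W)}$. The $L^2_{1/2-\eps}$-piece is immediately handled by Lemma~\ref{lem:monotone} and the integrability of $\delta^{1-2\eps}$. For the gradient, on $\W_\tau$ the weight is bounded and $|\nabla u|$ is uniformly bounded by interior regularity, while on $\calB_\tau$ estimate~\eqref{eq:boundary_1} yields the bound directly.

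The main work is to control the double integral
\[
  \calI := \iint_{\rn \times \rn} \frac{|\nabla u(x) - \nabla u(y)|^2}{|x-y|^{n+2(s-2\eps)}} \, \delta(x,y)^{1-2\eps} \diff x \diff y,
\]
using that $\nabla u\equiv 0$ in $\cW$. I would split $\calI$ into four pieces. (i) The integral over $\calB_\tau\times\calB_\tau$ is bounded via~\eqref{eq:boundary_1}. (ii) Over $\W_{\tau/2}\times \W_{\tau/2}$ the weight $\delta^{1-2\eps}$ is uniformly bounded, so Corollary~\ref{col:locSobolev} with $D=\W_{\tau/2}$ provides an unweighted $H^{1+s-\eps}$ bound which dominates this piece. (iii) In the mixed region $x\in\calB_{\tau/2}$, $y\in\W_\tau$ (and the symmetric one) one has $|x-y|\ge\tau/2$, so the kernel is non-singular; applying $|\nabla u(x)-\nabla u(y)|^2\le 2|\nabla u(x)|^2+2|\nabla u(y)|^2$, the pointwise bound $|\nabla u(x)|\le C\delta(x)^{s-1}$ from~\eqref{eq:pointwise_bdry}, and $\delta(x,y)^{1-2\eps}\le\delta(x)^{1-2\eps}$ reduces the critical contribution to $\int_{\calB_{\tau/2}}\delta(x)^{-1+2s-2\eps}\,dx$, which is finite since $2s-2\eps>0$. (iv) For $x\in\W$, $y\in\cW$, the integrand equals $|\nabla u(x)|^2|x-y|^{-n-2s+4\eps}\delta(x,y)^{1-2\eps}$; sub-splitting into $x\in\calB_\tau$ (directly controlled by~\eqref{eq:boundary_2}) and $x\in\W_\tau$ (where $|\nabla u|$ is bounded, $\delta(x,y)\le\diam(\W)$, and $|x-y|\ge\delta(x)\ge\tau$ renders $\int_{\cW}|x-y|^{-n-2s+4\eps}\,dy$ convergent thanks to $\eps<s/2$) completes the bound.

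The principal technical obstacle is region~(iii): although the kernel is not singular there, one must balance the boundary decay $|\nabla u(x)|\le C\delta(x)^{s-1}$ against the weight $\delta(x,y)^{1-2\eps}$ precisely enough to stay above the integrability threshold at $\partial\W$. All contributions to $\calI$ yield bounds of order $1/\eps^2$, so taking square roots produces the asserted $C/\eps$-estimate on $|u|_{\tH^{1+s-2\eps}_{1/2-\eps}(\W)}$; the $\eps$-independence of $C$ in the statement refers precisely to this final scaling.
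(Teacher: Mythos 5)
Your proposal is correct and follows essentially the same route as the paper: the split $\rn\times\rn = (\W\times\W) \cup (\W\times\cW)$ followed by the further decomposition of $\W\times\W$ into boundary ($\calB_\tau\times\calB_\tau$ via \eqref{eq:boundary_1}), interior ($\W_{\tau/2}\times\W_{\tau/2}$ via Corollary~\ref{col:locSobolev}), and mixed ($\calB_{\tau/2}\times\W_\tau$, handled by $|x-y|\ge\tau/2$, Young's inequality, and the pointwise decay \eqref{eq:pointwise_bdry}), and of $\W\times\cW$ into $\calB_\tau\times\cW$ (via \eqref{eq:boundary_2}) and $(\W\setminus\calB_\tau)\times\cW$, is exactly the paper's decomposition, and Remark~\ref{rem:kappa} is invoked in the same way to make the interior H\"older bound quantitative. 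The only minor difference is that you additionally discuss the lower-order $L^2_{1/2-\eps}$ and $H^1_{1/2-\eps}$ pieces of the full norm, which the paper leaves implicit.
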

\begin{proof}
We split
\begin{equation} \label{eq:split_u}
|u|_{\widetilde H^{1+s-2\eps}_{1/2-\eps}(\W)}^2  = 
|u|_{H^{1+s-2\eps}_{1/2-\eps}(\W)}^2 + 2 \iint_{\W \times \cW} \frac{|\nabla u(x)|^2}{|x - y|^{n+2s-4\eps}} \delta(x,y)^{1-2\eps} \diff y \diff x ,
\end{equation}
and treat the two terms on the right hand side separately.
We bound the integral over $\W \times \W$ as follows:
\[
|u|_{H^{1+s-2\eps}_{1/2-\eps}(\W)}^2 \le |u|_{H^{1+s-2\eps}_{1/2-\eps}(\calB_{\tau})}^2 
+ |u|_{H^{1+s-2\eps}_{1/2-\eps}(\W\setminus \overline{\calB_{\tau/2}})}^2 + 2 \iint_{\calB_{\tau/2} \times (\W\setminus \overline{\calB_{\tau}})} \frac{|\nabla u(x)- \nabla u(y)|^2}{|x - y|^{n+2s-4\eps}} \delta(x,y)^{1-2\eps} \diff y \diff x .
\]
Theorem \ref{thm:bdyrreg} (boundary weighted Sobolev regularity) and Corollary \ref{col:locSobolev} (interior Sobolev regularity), respectively, give upper bounds for the first two terms on the right hand side. For the last term, we write 
\[ \begin{aligned}
\iint_{\calB_{\tau/2} \times (\W\setminus \overline{\calB_{\tau}})} \frac{|\nabla u(x)- \nabla u(y)|^2}{|x - y|^{n+2s-4\eps}} \delta(x,y)^{1-2\eps} \diff y \diff x \le &
2 \int_{\calB_{\tau/2}} |\nabla u(x)|^2 \delta(x)^{1-2\eps} \left( \int_{\W\setminus \overline{\calB_{\tau}}} \frac{1}{|x - y|^{n+2s-4\eps}}  \diff y \right) \diff x  \\ 
& + 2 \int_{\W\setminus \overline{\calB_{\tau}}} |\nabla u(y)|^2 \left( \int_{\calB_{\tau/2}} \frac{\delta(x)^{1-2\eps} }{|x - y|^{n+2s-4\eps}} \diff x \right) \diff y .
\end{aligned} \]
Since for every $(x,y) \in \calB_{\tau/2} \times (\W\setminus \overline{\calB_{\tau}})$ we have $|x-y| \ge \tau/2$, using the pointwise bound \eqref{eq:pointwise_bdry} and that owing to Remark \ref{rem:kappa} (interior regularity estimate) we have
\[
|\nabla u (y) | \le C(f, \chi, \tau), \quad y \in \W\setminus \overline{\calB_{\tau}},
\]
we conclude that the previous integral is independent of $\eps$ and
\[
|u|_{H^{1+s-2\eps}_{1/2-\eps}(\W)}^2 \le \frac{C(\chi, s, n,\Omega, \varrho)}{\eps}.
\]

We now consider the integral over $\W\times\cW$ in \eqref{eq:split_u}. In order to bound the integral over $\calB_\tau \times \cW$, we resort to estimate \eqref{eq:boundary_2}. On the other hand, if $x \in \W \setminus \calB_\tau$ and $y \in \cW$, then
$|x-y|>\tau$ yields
\[
\int_{\cW} \frac{1}{|x - y|^{n+2s-4\eps}} \diff y \le \omega_{n-1}\int_\tau^\infty t^{-1-2s+2\eps} \diff t \le \frac{\w_{n-1} \tau^{-2s+4\eps}}{2 (s-2\eps)},
\]
whereas $\delta(x,y) \le \delta(x) \le \diam(\W)$ implies
\[
\iint_{(\W\setminus\calB_\tau)\times\cW}  \frac{|\nabla u(x)|^2}{|x - y|^{n+2s-4\eps}} \delta(x,y)^{1-2\eps} \diff y \diff x \le C \int_{\W\setminus\calB_\tau} |\nabla u(x)|^2 \diff x \le C \|u\|_{C^{1,s}(\Omega\setminus\overline{\calB_\tau})},
\]
\AJS{
where 
\[
  C \leq \frac{\omega_{n-1} \tau^{-2s+4\eps}}{s(2-2\eps)} \diam(\W)^{1-2\eps},
\]
which can be bounded above independently of $\eps \in (0,s/2)$.}
Adding this estimate with \eqref{eq:boundary_2} finishes the proof.
\end{proof}

We conclude the discussion about the regularity of $u$ by treating the nonhomogeneous case $f \neq 0$.

\begin{corollary}[global weighted Sobolev regularity for $f\ne 0$]
\label{col:globregwithf}
Let $\W$ be a bounded Lipschitz domain satisfying the exterior ball condition, \AJS{$\chi \in \calX(\overline\W)$} satisfy \eqref{eq:defofvarrho}. Moreover, let \AJS{$ f \in \F(\overline\W)$} and $u \in \tHs$ be the solution to \eqref{eq:obstacle}. For every \AJS{$\eps \in(0,s/2)$} we have that $u \in \tH^{1+s-2\eps}_{1/2-\eps}(\Omega)$ with the estimate
\[
  |u|_{\tH^{1+s-2\eps}_{1/2-\eps}(\W)} \leq \frac{C(\chi, s, n, \Omega, \varrho, \| f \|_{\F(\overline\W)})}\eps.
\]
\end{corollary}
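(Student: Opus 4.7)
The plan is to reduce the inhomogeneous case to the case $f=0$ already handled by Theorem~\ref{thm:global_regularity} (global weighted Sobolev regularity for $f=0$), using the decomposition provided in Lemma~\ref{lem:reducefzero} (reduction to $f=0$). Concretely, I would write
\[
  u = w_f + \widetilde u,
\]
where $w_f$ solves the linear Dirichlet problem \eqref{eq:Dirichlet} with right hand side $f$, and $\widetilde u \in \tHs$ solves the obstacle problem \eqref{eq:obstacle} with zero forcing and with modified obstacle $\widetilde \chi = \chi - w_f$. Lemma~\ref{lem:reducefzero} guarantees that $\widetilde \chi \in \calX(\overline\W)$, so the reduced obstacle problem fits the hypotheses of Theorem~\ref{thm:global_regularity}.

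Next, I would estimate the two contributions separately in the norm of $\tH^{1+s-2\eps}_{1/2-\eps}(\W)$ and conclude by the triangle inequality. For the linear piece, since $\F(\overline\W) \hookrightarrow C^{0,1-s}(\overline\W)$ by definition of $\F$ in \eqref{eq:defofcalF}, Theorem~\ref{teo:weighted} (weighted regularity of $w_g$) yields
\[
  |w_f|_{\tH^{1+s-2\eps}_{1/2-\eps}(\W)} \leq \frac{C(\W,s)}{\eps}\,\|f\|_{C^{0,1-s}(\overline\W)} \leq \frac{C(\W,s)}{\eps}\,\|f\|_{\F(\overline\W)}.
\]
For the nonlinear piece, Theorem~\ref{thm:global_regularity} applied to $\widetilde u$ with obstacle $\widetilde\chi$ produces
\[
  |\widetilde u|_{\tH^{1+s-2\eps}_{1/2-\eps}(\W)} \leq \frac{C(\widetilde\chi,s,n,\W,\widetilde\varrho)}{\eps},
\]
where $\widetilde\varrho := \dist(\{\widetilde\chi > 0\},\pp\W)$.

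The remaining task, which I expect to be the only delicate point, is to show that the constant in the second estimate depends on the original data $\chi$, $f$, $\varrho$ in the quantitative way claimed in the statement. For this, I would invoke the proof of Lemma~\ref{lem:reducefzero}: it produces a radius $r>0$ (depending on $\chi$, $\varrho$, and on $\|w_f\|_{L^\infty(\W)}$, hence on $\|f\|_{\F(\overline\W)}$) such that $\widetilde \chi < 0$ on $\Omega_r$, giving the lower bound $\widetilde \varrho \ge r$. Moreover, since Proposition~\ref{prop:Holder} (H\"older estimates for the linear problem) and \cite[Proposition 1.4]{RosOtonSerra} ensure that $w_f$ enjoys $C^{2,1}(\W) \cap C^{0,s}(\overline\W)$ regularity with norms bounded by $\|f\|_{\F(\overline\W)}$, the smoothness and size of $\widetilde \chi$ are controlled by $\chi$, $\W$, $s$, and $\|f\|_{\F(\overline\W)}$. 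Tracking these dependencies through the inequality above gives
\[
  |\widetilde u|_{\tH^{1+s-2\eps}_{1/2-\eps}(\W)} \leq \frac{C(\chi,s,n,\W,\varrho,\|f\|_{\F(\overline\W)})}{\eps},
\]
and combining with the bound on $|w_f|_{\tH^{1+s-2\eps}_{1/2-\eps}(\W)}$ via the triangle inequality yields the desired estimate. The main obstacle is thus purely bookkeeping: verifying that every constant extracted from Lemma~\ref{lem:reducefzero} and Theorem~\ref{thm:global_regularity} can indeed be expressed in terms of the listed parameters, with the $1/\eps$ scaling preserved.
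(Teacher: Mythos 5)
Your proposal matches the paper's proof exactly: decompose $u = w_f + \widetilde u$ via Lemma~\ref{lem:reducefzero}, apply Theorem~\ref{teo:weighted} to the linear piece $w_f$ (using $\F(\overline\W)\hookrightarrow C^{0,1-s}(\overline\W)$), apply Theorem~\ref{thm:global_regularity} to $\widetilde u$, and conclude by the triangle inequality. The additional bookkeeping you carry out to trace the dependence of $\widetilde\varrho$ and of $\|\widetilde\chi\|$ on the original data is correct and, if anything, more explicit than what the paper records.
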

\begin{proof}
\AJS{Recall that, from Lemma~\ref{lem:reducefzero} (reduction to $f=0$) we have the representation
\[
  u = w_f + \widetilde{u}.
\]
Apply Theorem~\ref{teo:weighted} (weighted regularity of $w_g$) for $w_f$, and Theorem~\ref{thm:global_regularity} (global weighted Sobolev regularity for $f=0$) to $\widetilde{u}$ to \rhn{prove the asserted estimate.}}
% We will, as before, reduce this to the case $f=0$. Let $w_f$ solve \eqref{eq:Dirichlet} and define $U = u-w_f$. We observe that $U$ solves the obstacle problem \eqref{eq:obstacle} with right hand side $f = 0$ and obstacle $\chi - w_f$. To conclude then that $U$ has the regularity properties of Theorem~\ref{thm:global_regularity} (global weighted Sobolev regularity for $f=0$) we must be able to assert that $\chi - w_f \in C^{2,1}(\W)$ and that $\chi-w_f <0$ on a layer near $\pp\W$.
% A sufficient condition for the former is to require $\chi \in C^{2,1}(\W)$ and $f \in \F(\overline\W)$, see \cite[formula (7.11)]{Grubb}.
% 
% On the other hand, since $f \geq 0$, Proposition~\ref{prop:fractmaxprinciple} (nonlocal maximum principle) implies that $w_f \geq 0$ whence $\chi - w_f <0$ in a layer around $\pp \W$ of size at least $\varrho$ defined in \eqref{eq:defofvarrho}.
% We finally use $u = U+w_f$ and combine Theorems~\ref{thm:global_regularity} (global weighted Sobolev regularity for $f=0$) and \ref{teo:weighted} (weighted regularity of $w_g$) to deduce the desired estimate.
\end{proof}

We conclude this section with a regularity result for $\lambda$ that will be useful in the sequel.

\begin{theorem}[H\"older regularity of $\lambda$]
\label{thm:reg_lambda}
Let $\lambda$ be defined in \eqref{eq:complementarity}. In the setting of Corollary~\ref{col:globregwithf} we have that $\lambda \in C^{\AJS{0,1-s}}(\overline\W)$.
\end{theorem}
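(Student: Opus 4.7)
The plan is to reduce to the case $f=0$ via Lemma~\ref{lem:reducefzero} and then to exploit the interior $C^{1,s}$ regularity from Proposition~\ref{prop:localHolder} together with the standard mapping property $\Laps\colon C^{1,s}\to C^{0,1-s}$. Writing $u = w_f + \widetilde u$ with $\widetilde\chi = \chi - w_f \in \calX(\overline\W)$, and noting that $\Laps w_f = f$ in $\W$, one has $\lambda = \Laps u - f = \Laps \widetilde u$, so it suffices to prove $\Laps \widetilde u \in C^{0,1-s}(\overline\W)$. Moreover, the proof of Lemma~\ref{lem:reducefzero} furnishes a number $r>0$ such that $\widetilde u \ge 0 > \widetilde\chi$ on the boundary layer $\calB_r$; the complementarity condition then forces $\lambda \equiv 0$ on $\calB_r$, and the coincidence set associated with $\widetilde u$ is compactly contained in $\W \setminus \overline{\calB_r}$.

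On the complementary interior region $D := \W \setminus \overline{\calB_{r/4}}$, Proposition~\ref{prop:localHolder} gives $\widetilde u \in C^{1,s}(D)$, while the boundary Hölder estimate from Subsection~\ref{sub:Bdry_reg} supplies the global bound $\widetilde u \in C^{0,s}(\rn)$. I would then show $\Laps \widetilde u \in C^{0,1-s}(\overline{\W \setminus \calB_{r/2}})$ through the symmetrized representation
\[
\Laps \widetilde u(x) = \frac{C(n,s)}{2} \int_{\rn} \frac{2\widetilde u(x) - \widetilde u(x+h) - \widetilde u(x-h)}{|h|^{n+2s}} \diff h,
\]
splitting the integral into $|h|\le r/8$, where the $C^{1,s}$ bound on $\widetilde u$ (valid in a ball of radius $r/4$ around any $x\in \overline{\W\setminus\calB_{r/2}}$) controls the symmetric second-order difference by $|h|^{1+s}$, and $|h|>r/8$, where only the global boundedness of $\widetilde u$ is needed and the kernel is non-singular. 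Estimating the dependence on $x$ by comparing the integrand at $x$ and $x+\eta$ and further splitting the $h$-integral at the scale $|\eta|$ produces the Hölder modulus $|\eta|^{1-s}$ in a standard way.

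The function $\lambda = \Laps\widetilde u$ is thus defined consistently on $\W$: it vanishes on $\calB_r$ and lies in $C^{0,1-s}(\overline{\W\setminus\calB_{r/2}})$, the two prescriptions agreeing on the overlap $\calB_r \setminus \overline{\calB_{r/2}}$. To obtain a global Hölder estimate, the only nontrivial case involves one point in $\overline{\calB_{r/2}}$ and one in $\W\setminus\overline{\calB_r}$, in which $|x-y|\ge r/2$ and therefore $|\lambda(x)-\lambda(y)|/|x-y|^{1-s}$ is bounded by $\|\lambda\|_{L^\infty}(r/2)^{s-1}$. The main obstacle is the non-global nature of the $C^{1,s}$ regularity of $\widetilde u$: a direct invocation of a black-box $C^{1,s}\to C^{0,1-s}$ mapping property for $\Laps$ is unavailable, and one must carry out the near/far-field bookkeeping in the symmetrized integral explicitly, combining the local $C^{1,s}$ bound in a fixed-size ball around any target point with the global $C^{0,s}$ bound at infinity.
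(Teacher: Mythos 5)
Your proposal is correct in spirit and shares the paper's overall strategy — reduce to $f=0$, use the compact containment of the coincidence set to get $\lambda\equiv 0$ near $\partial\W$, then show $\Laps\widetilde u$ is H\"older in a neighborhood of the coincidence set — but the route you take to the key estimate differs, and one step is glossed over.

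The paper does not abandon the black-box mapping property $\Laps\colon C^{1,s}(\rn)\to C^{0,1-s}(\rn)$. Instead it introduces a smooth cutoff $\eta$ with $\eta\equiv 1$ on the coincidence set $\Lambda$ and $\supp\eta\Subset\W$. Since $\widetilde u\in C^{1,s}(\W)$ interiorly and $\supp\eta$ is compactly contained in $\W$, the product $\eta\widetilde u$ belongs to $C^{1,s}(\rn)$ \emph{globally}, so the black-box property (\cite[Prop.\ 2.8]{Silvestre}) applies cleanly to $\Laps(\eta\widetilde u)$. On a neighborhood $\Lambda_\tau$ of $\Lambda$ one then writes $\Laps\widetilde u = \Laps(\eta\widetilde u) - K_{\tau/2}\star(1-\eta)\widetilde u$, and the correction term is smooth because $K_{\tau/2}$ is a smooth kernel by construction. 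Your claim that a direct invocation of the mapping property "is unavailable" is correct for $\widetilde u$ itself, but it becomes available for $\eta\widetilde u$ — that is precisely the paper's trick, and it replaces all the explicit near/far-field bookkeeping.

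In your direct approach, the far-field H\"older modulus needs more care than "comparing the integrand at $x$ and $x+\eta$ ... in a standard way." If one fixes $h$ with $|h|>r/8$ and compares integrands, the terms $\widetilde u(x\pm h) - \widetilde u(x+\eta\pm h)$ can land near $\partial\W$ where $\widetilde u$ is only $C^{0,s}$, yielding a bound of order $|\eta|^s$ for the far-field contribution. For $s<1/2$ this is weaker than $|\eta|^{1-s}$ and does not give the claimed regularity. To rescue this, one must change variables in the far-field integral (write $\int_{|h|>R}\widetilde u(x\pm h)|h|^{-n-2s}\,\mathrm{d}h$ as $\int_{|y-x|>R}\widetilde u(y)|x-y|^{-n-2s}\,\mathrm{d}y$), so that the $x$-dependence lives in the kernel and the integration domain, and then estimate separately the smooth-kernel difference (giving $O(|\eta|)$) and the thin annular shell $\{R<|y-x|\le R+|\eta|\}$ where the domains disagree (also $O(|\eta|)$). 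This makes the far-field actually Lipschitz in $x$, which is what one needs — and it is exactly the observation, in disguise, that $K_{\tau/2}\star(1-\eta)\widetilde u$ is a convolution with a smooth kernel. So your argument can be completed, but the step you dismissed as standard is where the real content hides; the paper's cutoff formulation sidesteps it entirely.
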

\begin{proof}
  We begin by observing that $\varrho>0$ according to \eqref{eq:defofvarrho} and the coincidence set $\Lambda \Subset \Omega$. Consequently $\lambda \equiv 0$ in the non-coincidence set $N$ and we need to prove the asserted regularity of $\lambda$ in $\Lambda$.

The arguments below mimic ideas used to prove Proposition~\ref{prop:localHolder} (interior H\"older regularity). We introduce a smooth cutoff function $\eta$ such that $\eta \equiv 1$ on $\Lambda$ and, for some $\tau >0$,
\[
  \dist(\supp(\eta), \pp\W) > 2\tau, \qquad \dist(\Lambda, \pp\{\eta = 1 \} ) > 2\tau.
\]
Define now 
\[
  \Lambda_\tau = \left\{ x \in \Omega : \dist(x,\Lambda)\leq \tau \right\}
\]
and let $x \in \Lambda_\tau$. Since $\eta \equiv 1$ on $B_\tau(z)$, we are now in a similar situation to the case $z \in A_1$ in the proof of Proposition~\ref{prop:localHolder}. Then we have for all $x\in B_\tau(z)$
\[
  \Laps u(x) = \Laps(\eta u)(x) - C(n,s) \int_{\rn} \frac{(1-\eta(y))u(y)}{|x-y|^{n+2s}} \diff y = \Laps(\eta u)(x) - \left( K_{\tau/2} \star (1-\eta)u \right)(x),
\]
where the last identity holds because $\eta(y)=1$ for $|z-x|, |x-y| \le\tau/2$. Since $u \in C^{1,s}(\W)$ and $\eta$ is smooth, we deduce that the first term $\eta u \in C^{1,s}(\rn)$ and $\Laps (\eta u) \in C^{\AJS{0,1-s}}(\rn)$. On the other hand, the second term $K_{\tau/2} \star (1-\eta)u$ is smooth in $B_\tau(z)$ which in turn is arbitrary. This implies $\Laps u \in C^{\AJS{0,1-s}}(\Lambda_\tau)$.

Finally, since $f \in \F(\overline\W) \subset C^{\AJS{0,1-s}}(\overline\W)$ we conclude that $\lambda = \Laps u - f \in C^{\AJS{0,1-s}}(\overline\Lambda_\tau)$.
\end{proof}

%%%%%%%%%%%%%%%%%%%%%%%%%%%%%%%%%%%%%%%%%%%%%%%%%%%%%%%%%%%%%%%%%%%%%%%%%%%%%%%%%%
\section{Finite element approximation}
\label{sec:FEM}
%%%%%%%%%%%%%%%%%%%%%%%%%%%%%%%%%%%%%%%%%%%%%%%%%%%%%%%%%%%%%%%%%%%%%%%%%%%%%%%%%%

In this section we will apply regularity estimates in weighted Sobolev spaces shown in Section~\ref{sec:regularity} to derive near optimal rates of convergence for a finite element method (FEM) for \eqref{eq:obstacle} over graded meshes. The latter compensate for the singular boundary behavior of solutions regardless of domain smoothness, which is a distinctive feature of fractional diffusion problems for any fractional order $s\in (0,1)$.

Let us then begin by describing the discrete framework that we will adopt. First, to avoid technicalities we shall assume, from now on, that $\W$ is a polytope and so convex owing to the exterior ball condition. Next, we introduce a family $\{\T\}_{h>0}$ of conforming and simplicial triangulations of $\overline\W$ which we assume shape regular, \ie we have that
\[
  \sigma = \sup_{h>0} \sup_{T \in \T} \frac{h_T}{\rho_T} <\infty,
\]
where $h_T =\diam(T)$ and $\rho_T $ is the diameter of the largest ball contained in $T$. The vertices of $\T$ will be denoted by $\{\x_i\}$. We comment that we assume that the elements $T \in \T$ are closed. In this case the star, patch, or first ring of $T \in \T$ is defined as
\[
  S^1_T = \bigcup \left\{ T' \in \T: T \cap T' \neq \emptyset \right\}.
\]
We also introduce the star of $S^1_T$ (or second ring of $T$),
\[
  S^2_T = \bigcup \left\{ T' \in \T: S^1_T \cap T' \neq \emptyset \right\}.
\]
Below, when discussing positivity preserving interpolation over fractional order smoothness spaces we partition $\T$ into two classes, interior and boundary elements, as follows:
\begin{equation}
\label{eq:defofinteriorandaway}
\calT_h^\circ = \left\{ T \in \T: S^1_T \cap \pp \W = \emptyset \right\}, \qquad
\calT_h^\pp = \left\{ T \in \T: S^1_T \cap \pp \W \neq \emptyset \right\}.
\end{equation}

On the basis of the triangulation $\T$ we define $V_h$ as the space of continuous, piecewise affine functions on $\T$ that vanish on $\pp\W$. The Lagrange nodal basis of $V_h$ will be denoted by $\{\phii_i\}$ and
\[
  S_i = \supp (\phii_i) .
\]
We will denote by $B_i$ the maximal ball, centered at $\x_i$, and contained in $S_i$. If $\rho_i$ is the radius of $B_i$, and $h_i = \diam(S_i)$ by shape regularity of the mesh we have the equivalences $\rho_i \approx h_i \approx h_T$, for all $T \subset S_i$.

%--------------------------------------------------------------------------------
\subsection{Positivity preserving interpolation over fractional order spaces}
\label{sub:positive}
%--------------------------------------------------------------------------------

Below it will become necessary to introduce a discrete version of the admissible set $\K$ defined in \eqref{eq:def_K}. In addition, when performing the analysis of the FEM it will become necessary that an interpolator of the exact solution belongs to this discrete admissible set. Since we assume that \AJS{$\chi \in \calX(\overline\W)$} and $f \in \F(\overline\W)$, we have that $u \in C(\overline\W)$ as a consequence of Proposition~\ref{prop:equiv} (\eqref{eq:obstacle}$\implies$\eqref{eq:complementarity}). Therefore one could, in principle, use the Lagrange interpolation operator. It turns out, however, that this operator does not possess suitable stability and approximation properties with respect to fractional order Sobolev spaces. For this reason, we will use instead the operator $I_h$ introduced in \cite{ChenNochetto} which we now describe.

\begin{definition}[positivity preserving interpolation operator]
\label{def:CZinterpolant}
Let $I_h : L^1(\W) \to V_h$ be defined by
\[
  I_h v = \sum_{i \colon \x_i \in \Omega} \left( \frac1{|B_i|} \int_{B_i} v(x) \diff x \right) \phii_i.
\]
\end{definition}

Notice that, since the sum is only over interior vertices of $\T$, we indeed have that $I_h v$ vanishes on $\pp\W$, whence $I_hv \in V_h$. In addition, by construction, this operator is positivity preserving: we have that $I_h v \geq 0$ whenever $v\geq 0$. Moreover, since for every $\x_i \in \Omega$ the ball $B_i$ is symmetric with respect to $\x_i$ we have the following exactness property for $I_h$
\begin{equation}
\label{eq:linear}
  I_h v(\x_i) = v(\x_i), \quad \forall v \in \polP_1(B_i),
\end{equation}
where by $\polP_1(E)$ we denote the space of polynomials of degree one over the set $E$. Notice however, that this operator is not a projection. In general, if $v_h \in V_h$ then $I_h v_h \neq v_h$; see \cite{NochettoWahlbin} for details. The following result summarizes the local stability and approximation properties of $I_h$.

\begin{proposition}[properties of $I_h$]
\label{prop:propsIh}
Let $p \in [1,\infty]$, $I_h$ be the operator introduced in \AJS{Definition~\ref{def:CZinterpolant} (positivity preserving interpolation operator)}, and $T \in \T$. Then, there are constants independent of $T$ and $h$ such that
\[
  \| I_h v \|_{L^p(T)} \leq C\| v \|_{L^p(S^1_T)}, \quad \forall v \in L^p(\Omega),
\]
and
\[
  \| \nabla I_h v \|_{L^p(T)} \leq C \| \nabla v \|_{L^p(S^1_T)}, \quad \forall v \in W^{1,p}_0(\Omega).
\]
Moreover, for \AJS{$t \in [1,2]$}, we also have the error estimate
\[
  \AJS{\| v - I_h v \|_{L^p(T)} \leq C h_T^{t} |v|_{W^{t,p}(S^1_T)}, \quad \forall v\in W^{t,p}(\Omega) \cap W^{1,p}_0(\Omega)}.
\]
\end{proposition}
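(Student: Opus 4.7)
My plan is to establish the three bounds in sequence, handling interior elements $T\in\calT_h^\circ$ and boundary elements $T\in\calT_h^\pp$ separately, because the partition of unity identity $\sum_i \phii_i \equiv 1$ on $T$ only holds when $T$ has no boundary vertex. The $L^p$ stability is the most direct: on each $T$ I would expand $I_h v|_T = \sum_{i : \x_i\in T\cap\Omega} a_i \phii_i$ with $a_i = |B_i|^{-1}\int_{B_i} v\,dx$, apply H\"older's inequality to bound $|a_i| \le C h_T^{-n/p}\|v\|_{L^p(B_i)}$ using shape regularity, and then sum the at most $n+1$ terms, noting that $\|\phii_i\|_{L^p(T)} \le C h_T^{n/p}$ and $B_i \subset S^1_T$.

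For the $W^{1,p}$ stability I would use the subtraction trick $\nabla I_h v = \sum_i (a_i - c)\nabla\phii_i$, which is valid for any constant $c$ provided $\sum_i \nabla\phii_i = 0$ on $T$. For $T\in\calT_h^\circ$ this identity holds directly; for $T\in\calT_h^\pp$, I would extend $v$ by zero across $\pp\W$ and formally include the boundary vertices $\x_j$ with $a_j = 0$ in the sum, so the identity persists. Choosing $c$ as the mean of the extended $v$ over the connected patch $\tilde S = \bigcup_{\x_i\in T}B_i$, I would estimate
\[
|a_i - c| \le C h_T^{-n/p}\|v - c\|_{L^p(\tilde S)},
\]
and invoke a Poincar\'e inequality on $\tilde S$ (with zero trace if $T\in\calT_h^\pp$) to obtain $\|v - c\|_{L^p(\tilde S)} \le C h_T \|\nabla v\|_{L^p(S^1_T)}$. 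Combined with $\|\nabla\phii_i\|_{L^p(T)} \le C h_T^{-1+n/p}$, the powers of $h_T$ cancel as required.

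For the approximation estimate I would write $v - I_h v = (v - P_T v) - I_h(v - P_T v)$, where $P_T v$ is a polynomial that $I_h$ preserves on $T$. For $t=1$ I take $P_T v$ to be the mean of $v$ over $S^1_T$ when $T\in\calT_h^\circ$ (or $P_T v = 0$ for $T\in\calT_h^\pp$ after extending $v$ by zero); for $t=2$ I take the first-order averaged Taylor polynomial of $v$ over an interior ball of $S^1_T$, which \eqref{eq:linear} preserves nodewise on $T\in\calT_h^\circ$. Applying the $L^p$ stability already proved to the second term and a standard Poincar\'e/Bramble--Hilbert estimate to the first yields $\|v - I_h v\|_{L^p(T)} \le C h_T^t |v|_{W^{t,p}(S^1_T)}$ for $t\in\{1,2\}$. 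The intermediate case $t\in(1,2)$ then follows by real interpolation between these endpoint estimates.

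The main obstacle concentrates on the boundary elements: since boundary vertices are excluded from the sum defining $I_h$, the operator does not preserve constants or affine functions on $T\in\calT_h^\pp$, and the usual Bramble--Hilbert argument does not apply there. The resolution, used in both Step 2 and Step 3, is to exploit the zero boundary condition $v\in W^{1,p}_0(\Omega)$ by extending $v$ by zero across $\pp\W$ and formally assigning $a_j = 0$ at boundary vertices; this restores the partition of unity on boundary patches and enables a zero-trace Poincar\'e inequality there, supplying the missing scaling in $h_T$.
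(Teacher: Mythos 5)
Your $L^p$ and $W^{1,p}$ stability arguments are sound, and your identification of the boundary elements as the crux of the matter is exactly right, but the stated remedy does not close the gap in the $t=2$ case. For $T\in\calT_h^\pp$ and a nonconstant affine polynomial $P_T v$ (the averaged Taylor polynomial), the decomposition $v - I_h v = (v - P_T v) - I_h(v - P_T v)$ \emph{fails}, because $I_h P_T v\ne P_T v$ on $T$: at a boundary vertex $\x_j$ one has $I_h P_T v(\x_j) = 0$ while $P_T v(\x_j)\ne 0$ in general. The uncontrolled remainder $P_T v - I_h P_T v = \sum_{j:\x_j\in T\cap\pp\W} P_T v(\x_j)\,\phii_j$ must be shown to be of size $C h_T^{2}|v|_{W^{2,p}(S^1_T)}$, i.e., one needs $|P_T v(\x_j)|\le C h_T^{2-n/p}|v|_{W^{2,p}(S^1_T)}$, and the zero-trace Poincar\'e inequality you invoke cannot produce it: it trades $\|v\|$ for $h_T\|\nabla v\|$, but $\nabla v$ itself does not vanish on $\pp\W$, so only a \emph{single} power of $h_T$ is available from that source.

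What is missing is the trace argument. Since $v$ vanishes on the boundary face $e_j\ni\x_j$, write $\|P_T v\|_{L^p(e_j)} = \|P_T v - v\|_{L^p(e_j)}$, apply the scaled trace inequality on $T$ to $w = v - P_T v$, and then an inverse estimate for the affine function $P_T v$ restricted to $e_j$ to recover $|P_T v(\x_j)|\le C h_T^{(1-n)/p}\|P_T v\|_{L^p(e_j)}$; Bramble--Hilbert on $S^1_T$ then supplies the two powers of $h_T$. This is precisely the device used in \cite[Lemma 3.2]{ChenNochetto}, which the paper simply cites, and which the paper itself replays in the weighted $L^2$ setting in the proof of Proposition~\ref{prop:interpolation_boundary}. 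Once this is inserted your proof is complete; in particular the intermediate orders $t\in(1,2)$ do follow by interpolation between the $t=1$ and $t=2$ endpoints, as both you and the paper indicate.
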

\begin{proof}
See \cite[Lemmas 3.1 and 3.2]{ChenNochetto}. The fractional error estimates follows from interpolation between the cases \rhn{$t=1$ and $t=2$} in \cite[Lemma 3.2]{ChenNochetto}.
\end{proof}

We need to obtain similar properties in fractional order Sobolev spaces, and for that we will follow the ideas of \cite{Ciarlet}. We begin with a local stability estimate over the set $T\times S^1_T$, which exhibits the least amount of overlap for every $T\in\T$ to control the \emph{nonlocal} fractional Sobolev norms \cite{Faermann2,Faermann}.

\begin{proposition}[local stability of $I_h$]
\label{prop:stabIh}
Let $s \in (0,1)$ and $T \in \T$. There is a constant $C(n,\sigma)$, depending only on the dimension $n$ and the shape regularity parameter $\sigma$ of the mesh, such that the estimate
\[
  \iint_{T \times S^1_T} \frac{ |I_hv(x) - I_hv(y) |^2}{|x-y|^{n+2s}} \diff y \diff x \leq \frac{C(n,\sigma)}{1-s} h_T^{n-2s} \sum_{i: \x_i \in S^1_T} \left( \frac1{|B_i|} \int_{B_i} v(z) \diff z \right)^2
\]
holds for all $v \in L^1(\Omega)$.
\end{proposition}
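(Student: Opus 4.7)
My plan is to expand the interpolant as $I_h v = \sum_i c_i \phi_i$ with $c_i = \frac{1}{|B_i|}\int_{B_i} v(z) \diff z$, so that
\[
  I_h v(x) - I_h v(y) = \sum_i c_i \bigl(\phi_i(x) - \phi_i(y)\bigr).
\]
For $(x,y) \in T \times S^1_T$, any basis function $\phi_i$ that is nonzero at $x$ or at $y$ must have its node $\x_i$ a vertex of some simplex contained in $S^1_T$, and therefore $\x_i \in S^1_T$. Shape regularity guarantees that the number of such indices is bounded by a constant $N = N(n,\sigma)$. Applying the elementary Cauchy--Schwarz inequality $(a_1 + \cdots + a_N)^2 \le N(a_1^2 + \cdots + a_N^2)$ yields the pointwise bound
\[
  |I_h v(x) - I_h v(y)|^2 \le N \sum_{i:\, \x_i \in S^1_T} c_i^2 \, |\phi_i(x) - \phi_i(y)|^2.
\]

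Integrating against the fractional kernel, it suffices to estimate, uniformly in $i$ with $\x_i \in S^1_T$, the quantity
\[
  J_i := \iint_{T \times S^1_T} \frac{|\phi_i(x) - \phi_i(y)|^2}{|x-y|^{n+2s}} \diff y \diff x,
\]
and to show that $J_i \le C(n,\sigma) h_T^{n-2s}/(1-s)$. For this I will combine the Lipschitz estimate $|\phi_i(x) - \phi_i(y)| \le C(n,\sigma) h_T^{-1} |x-y|$ (which follows from $\|\nabla \phi_i\|_{L^\infty} \le C(n,\sigma) h_T^{-1}$) with the shape regularity bound $\diam(S^1_T) \le C(n,\sigma) h_T$, so that $|x-y| \le C h_T$ throughout the domain of integration. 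Substituting these bounds and integrating in polar coordinates in the $y$-variable gives
\[
  J_i \le \frac{C(n,\sigma)}{h_T^2} \int_T \int_0^{C h_T} r^{1-2s} \diff r \diff x \le \frac{C(n,\sigma)}{h_T^2} \, |T| \, \frac{(C h_T)^{2-2s}}{2(1-s)} \le \frac{C(n,\sigma)}{1-s} \, h_T^{n-2s},
\]
where the last inequality uses $|T| \le C(n) h_T^n$. Combining this with the pointwise inequality above produces the claimed estimate.

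The only real subtlety is tracking the $s$-dependence of the constant. The factor $(1-s)^{-1}$ is produced solely by the near-diagonal integral $\int_0^{C h_T} r^{1-2s} \diff r$, which is responsible for the singular behavior as $s \uparrow 1$. Crucially, no compensating $s^{-1}$ blow-up appears: because $\diam(S^1_T) \le C h_T$, the far-field contribution that typically generates such a factor (when one integrates against the kernel over all of $\rn$) is simply absent here, so the Lipschitz bound can be applied uniformly over the entire domain of integration and the final constant has the advertised form $C(n,\sigma)/(1-s)$.
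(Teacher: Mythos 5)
Your argument is correct and follows essentially the same route as the paper: expand $I_h v(x) - I_h v(y)$ over the finitely many basis functions supported near $T$, use the pointwise Cauchy--Schwarz inequality together with the shape-regularity bound on the number of relevant nodes, and then estimate each $\iint_{T\times S^1_T}|\phi_i(x)-\phi_i(y)|^2/|x-y|^{n+2s}\,\diff y\,\diff x$ via the Lipschitz bound $\|\nabla\phi_i\|_{L^\infty}\lesssim h_T^{-1}$ and polar coordinates over a radius $\lesssim h_T$. The closing remark on the provenance of the $(1-s)^{-1}$ factor is a nice (accurate) sanity check but not a departure from the paper's proof.
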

\begin{proof}
From Definition~\ref{def:CZinterpolant} (positivity preserving interpolation operator) it follows that, if $x \in T$ and $y \in S^1_T$, then
\[
  I_h v(x) - I_h v(y) = \sum_{i:\x_i \in S^1_T} \left( \frac1{|B_i|} \int_{B_i} v(z) \diff z \right) ( \phii_i(x) - \phii_i(y)).
\]
In addition we observe that, by shape regularity the number of terms in this sum is uniformly bounded by a constant that depends only on $\sigma$. Thus, by H\"older's inequality we have that
\begin{multline*}
  \iint_{T \times S^1_T} \frac{ |I_hv(x) - I_hv(y) |^2}{|x-y|^{n+2s}} \diff y \diff x \leq C(\sigma) \sum_{i:\x_i \in S^1_T} \left( \frac1{|B_i|} \int_{B_i} v(z) \diff z \right)^2 
    \iint_{T \times S^1_T} \frac{ |\phii_i(x) - \phii_i(y) |^2}{|x-y|^{n+2s}} \diff y \diff x.
\end{multline*}
From mesh regularity it follows that $|\phii_i|_{C^{0,1}(\overline\W)} \leq C(\sigma)h_T^{-1}$ uniformly in $i$ and that
\[
  \alpha(x) = \max_{z \in S^1_T} |x-z| \leq C(\sigma) h_T.
\]
These two observations and integration in polar coordinates then imply that
\[
  \iint_{T \times S^1_T} \frac{ |\phii_i(x) - \phii_i(y) |^2}{|x-y|^{n+2s}} \diff y \diff x \leq \frac{C(\sigma)}{h_T^2} \iint_{T \times S^1_T} |x-y|^{2-n-2s} \diff y \diff x
    \leq \frac{C(n,\sigma)}{h_T^2} \int_T \int_0^{\alpha(x)} \rho^{1-2s} \diff \rho \diff x.
\]
From this the asserted estimate immediately follows.
\end{proof}

Let now $S\subset \rn$. It is well-known that for every $v \in W^{k,1}(S)$ there is a unique polynomial $P_kv$ of degree $k$ that satisfies
\begin{equation}
\label{eq:averaging}
  \int_S \partial^\alpha(v-P_kv) \diff x = 0, \quad \forall \alpha \in\N^n,\  |\alpha|\leq k.
\end{equation}
We shall also need the following fractional Poincar\'e inequality.

\begin{proposition}[fractional Poincar\'e inequality]
Let $s \in (0,1)$, $\alpha \in [0,s)$ and $S$ be a domain which is a finite union of overlapping star-shaped domains $S_i$ with respect to balls $B_i$, $i=1,\ldots,I$. Then, there exists a constant $C>0$, depending on the chunkiness of $S_i$ and the amount of overlap between the subdomains $S_i$, such that, for any $i \in \{1,\ldots,I\}$, we have
\begin{equation}
\label{eq:poincare}
  \| v - \overline{v}_i \|_{L^2(S)} \le C \diam(S)^{s-\alpha} |v|_{H^s_\alpha(S)}, \quad \forall v \in H^s_\alpha(S),
\end{equation}
where $\overline{v}_i = \tfrac1{|S_i|}\int_{S_i} v(x) \diff x$.
\end{proposition}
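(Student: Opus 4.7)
The plan is to first prove the inequality on a single star-shaped subdomain $S_i$, then extend to all of $S$ using the overlap structure in a Boman-chain style argument.

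For the local estimate on $S_i$, I would start with Jensen's inequality applied to the average $\overline{v}_i$:
\[
\|v - \overline{v}_i\|_{L^2(S_i)}^2 \le \frac{1}{|S_i|} \iint_{S_i \times S_i} |v(x)-v(y)|^2 \diff x \diff y.
\]
Then multiply and divide by $|x-y|^{n+2s}\delta(x,y)^{-2\alpha}$ to recover the integrand of the seminorm $|v|_{H^s_\alpha(S_i)}^2$, leaving a ``weight quotient'' $|x-y|^{n+2s}/\delta(x,y)^{2\alpha}$ that must be bounded pointwise on $S_i \times S_i$. Using $|x-y| \le \diam(S_i)$ together with the chunkiness of $S_i$ (which gives $|S_i| \gtrsim \diam(S_i)^n$), the goal is to arrive at
\[
\|v - \overline{v}_i\|_{L^2(S_i)} \le C\,\diam(S_i)^{s-\alpha}\,|v|_{H^s_\alpha(S_i)},
\]
with constant depending only on the chunkiness of $S_i$.

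To extend from $S_i$ to $S$, I would enumerate a chain of overlapping subdomains connecting $S_i$ to each $S_j$ and estimate
\[
\|v - \overline{v}_i\|_{L^2(S)}^2 \le \sum_j \|v - \overline{v}_i\|_{L^2(S_j)}^2 \le 2\sum_j \Big( \|v - \overline{v}_j\|_{L^2(S_j)}^2 + |S_j|\,|\overline{v}_i - \overline{v}_j|^2 \Big).
\]
For any pair with nontrivial overlap, Cauchy--Schwarz on $S_i \cap S_j$ yields
\[
|\overline{v}_i - \overline{v}_j|^2 \le \frac{2}{|S_i \cap S_j|}\Big( \|v - \overline{v}_i\|_{L^2(S_i \cap S_j)}^2 + \|v - \overline{v}_j\|_{L^2(S_i \cap S_j)}^2 \Big),
\]
so iterating along the chain and applying the local estimate of Step 1 on each $S_j$ (using $\diam(S_j) \le \diam(S)$) gives the desired global inequality, with constant depending on the overlap parameters of the covering.

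The main obstacle is the weight quotient in Step 1. The naive bound using only $|x-y| \le \diam(S_i)$ gives $|x-y|^{n+2s}/\delta(x,y)^{2\alpha} \le \diam(S_i)^{n+2s}/\delta(x,y)^{2\alpha}$, which is unbounded when $\delta(x,y) \to 0$. The fix is to split $S_i \times S_i$ into the regime $\delta(x,y) \ge c\,|x-y|$ (where the quotient is directly $\le C\,\diam(S_i)^{n+2(s-\alpha)}$) and its complement, where one instead integrates the weighted integrand against a pointwise bound that exploits the condition $\alpha < s$ so that the singularity $\delta^{-2\alpha}$ is integrable against $|x-y|^{2(s-\alpha)-n}$ in polar coordinates centered at $x$. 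The star-shapedness of $S_i$ with respect to $B_i$ lets one parameterize the ``bad'' region via segments joining $y$ to the center of $B_i$, converting the weighted seminorm bound into a one-dimensional Hardy-type estimate that finally absorbs the singular weight into the constant.
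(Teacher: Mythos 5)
You correctly identify the two-step strategy --- a local weighted Poincar\'e inequality on each star-shaped $S_i$, extended to $S$ by a Boman/Dupont--Scott chain of overlaps --- and your Step~2 matches what the paper has in mind. Be aware, though, that the paper's ``proof'' is purely a pair of citations: \cite[Prop.~4.8]{AcostaBorthagaray} for a single star-shaped domain and \cite[Thm.~7.1]{DupontScott} for the chaining. That means the real content of a self-contained argument sits entirely in your Step~1, and there your sketch has a genuine gap.

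After Jensen, the task is to control $\iint_{S_i\times S_i}|v(x)-v(y)|^2\diff x\diff y$ by the weighted seminorm. Your treatment of the ``good'' regime $\delta(x,y)\ge c|x-y|$ is fine, since there $|x-y|^{n+2s}/\delta(x,y)^{2\alpha}\le c^{-2\alpha}\diam(S_i)^{n+2(s-\alpha)}$. On the complementary ``bad'' regime, however, ``integrating the weighted integrand against a pointwise bound'' is not an argument: the weighted integrand multiplied by the quotient $|x-y|^{n+2s}/\delta(x,y)^{2\alpha}$ is identically $|v(x)-v(y)|^2$, which has no useful pointwise bound, and the asserted ``integrability of $\delta^{-2\alpha}$ against $|x-y|^{2(s-\alpha)-n}$'' does not hold once $\alpha\ge 1/2$ --- a case allowed by the hypothesis $\alpha\in[0,s)\subset[0,1)$ --- because $\delta(y)^{-2\alpha}$ is then non-integrable near $\partial\Omega$. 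What must actually carry the bad regime is a quantitative trade-off: for each $x$, the bad slice $\{y:\delta(x,y)<c|x-y|\}$ is a thin shell near $\partial\Omega$ whose measure decays fast enough to offset the unboundedness of the quotient; or, as you gesture at, one chains $v(x)-v(y)$ through an intermediate point $z$ on the segment from $y$ toward the center of $B_i$ chosen so that $(x,z)$ and $(z,y)$ both land in the good regime, with a subsequent averaging in $z$. Either route requires a careful integral estimate that your sketch does not supply, and that is precisely the step the paper outsources to \cite[Prop.~4.8]{AcostaBorthagaray}.
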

\begin{proof}
We must first observe that when $S$ is itself star-shaped, the result is proved in \cite[Proposition 4.8]{AcostaBorthagaray}.

In the general case, the result is an easy modification of the arguments used to show \cite[Theorem 7.1]{DupontScott}; see also \cite[Corollary 3.2]{nochetto2016piecewise} and \cite[Corollary 4.4]{NochettoOtarola}. For brevity we skip the details.
\end{proof}

Notice that, as a consequence of the fractional Poincar\'e inequality \eqref{eq:poincare}, we have that, whenever $t \in (1,2)$ and $\alpha \in [0, t-1)$, there are constants that depend only on $\sigma$ such that, for every $v \in H^t_\alpha(S^2_T)$, the polynomial $P_1v$, defined by \eqref{eq:averaging} with $S=S_T^2$, satisfies
\begin{align*}
  \| v - P_1 v \|_{L^2(S^2_T)} &\leq C h_T^{t-\alpha} | v |_{H^t_{\alpha}(S^2_T)}, \\
  \| \nabla(v - P_1 v) \|_{L^2(S^2_T)} &\leq C h_T^{t-\alpha-1} | v |_{H^t_\alpha}(S^2_T).
\end{align*}
\rhn{We use \cite[Lemma 23.1]{Tartar07} to interpolate these two inequalities and obtain that, whenever $s \in [0,1]$, $t \in(1,2)$, and $\alpha \in[0,t-1)$, there is a constant $C$} that depends only on $\sigma$ for which
\begin{equation}
\label{eq:est_P}
  |v-P_1v|_{H^s(S^2_T)} \leq C h_T^{t-\alpha-s} |v|_{H^t_\alpha(S^2_T)}.
\end{equation}

With these estimates at hand, we now proceed to obtain local interpolation error estimates for $I_h$ of Definition~\ref{def:CZinterpolant} (positivity preserving interpolation operator). We must do this separately for interior and boundary elements, as defined in \eqref{eq:defofinteriorandaway}. We first give the interior estimate and next the boundary estimate.

\begin{proposition}[interior interpolation estimate]
\label{prop:interpolation_interior}
Let $\calT_h^\circ$ be defined in \eqref{eq:defofinteriorandaway} and $T \in \calT_h^\circ$. Assume, in addition, that $s \in (0,1)$, $t \in (1,2)$, and that $I_h$ is the positivity preserving interpolator of Definition~\ref{def:CZinterpolant}. Then, there is a constant $C(n,\sigma,t)$ that depends only on the dimension $n$, the shape regularity parameter $\sigma$, and $t$ such that
\[
  \iint_{T \times S^1_T} \frac{|(v-I_h v) (x) - (v-I_h v) (y)|^2}{|x-y|^{n+2s}} \diff y \diff x \le \frac{C(n,\sigma, t)}{1-s} h_T^{2(t-s)} |v|_{H^t(S^2_T)}^2,
\]
\AJS{where the constant $C(n,\sigma, t)$ is \rhn{non-decreasing} in $t$}.
\end{proposition}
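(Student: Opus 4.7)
The plan is to combine the classical subtract-a-polynomial technique with the local stability bound in Proposition~\ref{prop:stabIh}. The key structural observation is that, since $T \in \calT_h^\circ$, the hypothesis $S^1_T \cap \pp\W = \emptyset$ guarantees that every vertex $\x_i \in S^1_T$ is interior, and therefore contributes to the sum defining $I_h$. Let $P_1 v$ be the degree-one polynomial on $S^2_T$ produced by \eqref{eq:averaging}, viewed as an affine function on $\rn$. Because each $B_i \subset S^2_T$ (by shape regularity) is centered at $\x_i$, one has $\tfrac{1}{|B_i|}\int_{B_i} P_1 v = P_1 v(\x_i)$ for every $\x_i \in S^1_T$, and since affine functions on any simplex $T' \subset S^1_T$ are reconstructed exactly from their nodal values via the Lagrange basis, this yields the reproduction identity $I_h P_1 v = P_1 v$ on $T \cup S^1_T$. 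Consequently, on this set, $v - I_h v = (v - P_1 v) - I_h(v - P_1 v)$.

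By the triangle inequality the target double integral is bounded by $2A + 2B$, where
\[
  A = \iint_{T \times S^1_T} \frac{|(v - P_1 v)(x) - (v - P_1 v)(y)|^2}{|x-y|^{n+2s}} \diff y \diff x,
\]
and $B$ is the analogous integral with $v - P_1 v$ replaced by $I_h(v - P_1 v)$. For $A$, I would enlarge the domain of integration from $T \times S^1_T$ to $S^2_T \times S^2_T$ and apply \eqref{eq:est_P} with $\alpha = 0$ to obtain
\[
  A \leq |v - P_1 v|_{H^s(S^2_T)}^2 \leq C(n,\sigma,t)\, h_T^{2(t-s)}\, |v|_{H^t(S^2_T)}^2.
\]
For $B$, I would apply Proposition~\ref{prop:stabIh} with $w = v - P_1 v$ to get
\[
  B \leq \frac{C(n,\sigma)}{1-s}\, h_T^{n-2s} \sum_{\x_i \in S^1_T} \left( \frac{1}{|B_i|} \int_{B_i} (v - P_1 v) \diff x \right)^2.
\]
Jensen's inequality bounds each summand by $|B_i|^{-1}\, \| v - P_1 v \|_{L^2(S^2_T)}^2$; combining with the equivalence $|B_i| \approx h_T^n$, the finite cardinality of the sum (by shape regularity), and the $s=0$ case of \eqref{eq:est_P} (that is, $\| v - P_1 v \|_{L^2(S^2_T)} \leq C\, h_T^{t}\, |v|_{H^t(S^2_T)}$) produces $B \leq \tfrac{C(n,\sigma,t)}{1-s}\, h_T^{2(t-s)}\, |v|_{H^t(S^2_T)}^2$. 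Adding the two estimates yields the desired bound, and the monotonicity in $t$ of the resulting constant can be enforced by replacing $C(n,\sigma,t)$ with $\sup_{1<t'\le t} C(n,\sigma,t')$.

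The main obstacle is the reproduction identity $I_h P_1 v = P_1 v$ on the whole patch $T \cup S^1_T$: it uses in an essential way that no vertex of $S^1_T$ lies on $\pp\W$, which is precisely the defining property of $\calT_h^\circ$ and the reason boundary elements must be handled separately in the forthcoming analysis. The remaining ingredients—enlarging to $S^2_T \times S^2_T$, Jensen's inequality, and tracking the explicit $\tfrac{1}{1-s}$ factor through Proposition~\ref{prop:stabIh}—are routine bookkeeping.
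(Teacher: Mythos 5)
Your proof is correct and follows the same route as the paper's: subtract the averaged Taylor polynomial $P_1 v$ over $S^2_T$, use the interiority of every vertex in $S^1_T$ (the defining property of $\calT_h^\circ$) together with the exactness \eqref{eq:linear} to get $I_h P_1 v = P_1 v$ there, estimate $v - P_1 v$ via \eqref{eq:est_P} with $\alpha = 0$, and estimate $I_h(v - P_1 v)$ via Proposition~\ref{prop:stabIh} plus Jensen and \eqref{eq:est_P} at $s=\alpha=0$. The only cosmetic difference is that you split with the triangle inequality into $2A + 2B$, whereas the paper writes $v - I_h v = (v - P_1 v) + (P_1 v - I_h v)$ and estimates the two summands directly; the substance is identical.
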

\begin{proof}
We begin by writing $v - I_hv = (v-P_1v) + (P_1v-I_hv)$, where $P_1v \in \polP_1$ is the polynomial defined by \eqref{eq:averaging} over $S^2_T$. We estimate the two terms on the right hand side separately.

Using \eqref{eq:est_P} with $\alpha = 0$ the first term can be estimated as follows:
\[
  \iint_{T \times S^1_T} \frac{|(v-P_1 v) (x) - (v-P_1 v) (y)|^2}{|x-y|^{n+2s}} \diff y \diff x \leq |v-P_1 v|^2_{H^s(S^1_T)} \le Ch_T^{2(t-s)}|v|^2_{H^t({S^2_T})}.
\]

On the other hand, since $P_1v \in \polP_1(S^2_T)$ it follows, from \eqref{eq:linear}, that $I_hP_1v_{|S^1_T} = P_1v_{|S^1_T}$ and to control the second term we only need to invoke Proposition~\ref{prop:stabIh} (local stability of $I_h$) to arrive at
\begin{align*}
  \iint_{T \times S^1_T} \frac{|(P_1 v - I_h v) (x) - (P_1 v - I_h v)(y)|^2}{|x-y|^{n+2s}} \diff y \diff x &\leq 
    \frac{C(n,\sigma)}{1-s} h_T^{n-2s} \sum_{i \colon \x_i\in S^1_T} \frac{1}{|B_i|}\| v - P_1 v\|_{L^2(B_i)}^2 \\
    &\leq \frac{C(n,\sigma)}{1-s} h_T^{-2s} \| v - P_1 v\|_{L^2(S^2_T)}^2.
\end{align*}
Setting $s=\alpha=0$ in \eqref{eq:est_P} yields the desired estimate.
\end{proof}

As a final preparatory step we obtain local interpolation error estimates for elements in $\calT_h^\pp$

\begin{proposition}[boundary interpolation estimate]
\label{prop:interpolation_boundary}
Let $\calT_h^\pp$ be defined in \eqref{eq:defofinteriorandaway} and $T \in \calT_h^\pp$. Assume, in addition, that $s \in (0,1)$, $t \in (1,2)$, $\alpha \in [0,1/2)$, and that $I_h$ is the positivity preserving interpolation operator of Definition~\ref{def:CZinterpolant}. Then, there is a constant $C(n,\sigma,t)$ that depends only on the dimension $n$, the shape regularity parameter $\sigma$, and $t$ such that, for all $v \in \tH_\alpha^t(\W)$, we have
\[
  \iint_{T \times S^1_T} \frac{|(v-I_h v) (x) - (v-I_h v) (y)|^2}{|x-y|^{n+2s}} \diff y \diff x \leq \frac{C(n,\sigma, t)}{1-s} h_T^{2(t-s-\alpha)} |v|_{H_\alpha^t(S^2_T)}^2,
\]
\AJS{where the constant $C(n,\sigma,t)$ is \rhn{non-decreasing} in $t$}.
\end{proposition}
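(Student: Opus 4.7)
The plan is to adapt the argument of Proposition~\ref{prop:interpolation_interior} to boundary elements. As there, I would split $v - I_h v = (v - P_1 v) + (P_1 v - I_h v)$, where $P_1 v \in \polP_1$ is the linear polynomial defined on $S^2_T$ by the moment conditions \eqref{eq:averaging}. The first summand is controlled directly by the weighted fractional Poincar\'e estimate \eqref{eq:est_P}, now used with the present $\alpha$ in place of $0$, which gives $|v - P_1 v|^2_{H^s(S^1_T)} \leq C h_T^{2(t - s - \alpha)} |v|^2_{H^t_\alpha(S^2_T)}$, already of the required order.

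The genuinely new feature for $T \in \calT_h^\pp$ is that $I_h$ no longer reproduces linear polynomials on $S^1_T$: at every boundary vertex $\x_i \in \pp\W \cap \overline{S^1_T}$ one has $I_h P_1 v(\x_i) = 0$ while $P_1 v(\x_i)$ is generically nonzero. To isolate this defect I would decompose
\[
  P_1 v - I_h v = (P_1 v - I_h P_1 v) - I_h (v - P_1 v).
\]
The term $I_h(v - P_1 v)$ is treated as in the interior case, combining the local stability from Proposition~\ref{prop:stabIh} with the weighted $L^2$-approximation bound $\| v - P_1 v \|_{L^2(S^2_T)} \leq C h_T^{t - \alpha} |v|_{H^t_\alpha(S^2_T)}$ that follows from \eqref{eq:poincare}. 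For the remaining term, since $I_h P_1 v$ agrees with $P_1 v$ at every interior vertex while vanishing at every boundary vertex, we have
\[
  (P_1 v - I_h P_1 v)(x) = \sum_{\x_i \in \pp\W \cap \overline{S^1_T}} P_1 v(\x_i)\, \phii_i(x),
\]
and the basis-function seminorm estimate already derived in the proof of Proposition~\ref{prop:stabIh} reduces matters to bounding the nodal values $|P_1 v(\x_i)|$ at boundary vertices.

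The hard step, and the main new ingredient, is therefore establishing a pointwise bound of the form
\[
  |P_1 v(\x_i)|^2 \leq C\, h_T^{2(t - \alpha) - n}\, |v|^2_{H^t_\alpha(S^2_T)}
\]
for every $\x_i \in \pp\W \cap \overline{S^1_T}$. This is exactly the point at which the hypothesis $\alpha \in [0,1/2)$ becomes essential, because it is precisely this range in which functions in $\tH^t_\alpha(\W)$ retain a vanishing trace on $\pp\W$. I would obtain the bound by applying a scaled trace inequality to $v - P_1 v$ on the boundary piece $\pp\W \cap S^2_T$ (of surface measure comparable to $h_T^{n-1}$), using that $v$ has zero trace there, together with the equivalence between pointwise values and $L^2$-averages for a linear polynomial on a set of that size; a standard scaling to a reference configuration combined with shape regularity ensures the constants depend only on $n$, $\sigma$, and $t$. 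Assembling the three pieces then yields the claimed estimate, with the $(1-s)^{-1}$ factor arising, as in the interior case, from the $H^s$-seminorm of the nodal basis functions through integration in polar coordinates.
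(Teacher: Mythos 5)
Your proposal follows essentially the same route as the paper's proof: the split $v - I_h v = (v - P_1 v) + (P_1 v - I_h v)$, the further decomposition $P_1 v - I_h v = I_h(P_1 v - v) + (P_1 v - I_h P_1 v)$, the reduction to bounding $|P_1 v(\x_i)|$ at boundary vertices, and the use of a scaled trace inequality combined with the vanishing trace of $v$ and an inverse estimate for the linear polynomial $P_1 v$ all match the paper's argument, and you correctly identify $\alpha < 1/2$ as the condition ensuring the zero-trace property of $\tH^t_\alpha(\W)$. The only cosmetic difference is that the paper applies the trace inequality on a single boundary face $e_j$ rather than on $\pp\W\cap S^2_T$, but the substance is identical.
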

\begin{proof}
As in the proof of Proposition~\ref{prop:interpolation_interior} (interior interpolation estimate) we decompose $v - I_hv = (v-P_1v) + (P_1v-I_hv)$ and estimate each term separately. For the first term, we use \eqref{eq:est_P} to obtain
\[
  \iint_{T \times S^1_T} \frac{|(v-P_1 v) (x) - (v-P_1 v) (y)|^2}{|x-y|^{n+2s}} \diff y \diff x  \le Ch_T^{2(t-s-\alpha)}|v|^2_{H^t_\alpha({S^2_T})}.
\]

The estimate of the second term $P_1v - I_h v$ is now more delicate, as we cannot exploit the symmetries that $T \in \calT_h^\circ$ afforded us in Proposition~\ref{prop:interpolation_interior} (interior interpolation estimate). Instead, we will follow the ideas used to obtain \cite[Lemma 3.2]{ChenNochetto}, where a similar difficulty is handled by further decomposing this term into
\[
  P_1 v - I_h v = I_h(P_1 v - v) + (P_1v - I_hP_1v).
\]
Proposition~\ref{prop:stabIh} (local stability of $I_h$) and estimate \eqref{eq:est_P} for $s=0$ allow us to bound the first term:
\[
  \iint_{T \times S^1_T} \frac{|I_h(P_1 v- v) (x) - I_h(P_1 v- v) (y)|^2}{|x-y|^{n+2s}} \diff y \diff x  \leq \frac{C(n,\sigma, t)}{1-s} h_T^{2(t-s-\alpha)} 
    |v|_{H_\alpha^t(S^2_T)}^2.
\]
Next, we notice that the difference $P_1v - I_hP_1v$ can be written, for $x \in S^1_T$, as
\[
  (P_1v - I_hP_1v)(x) = \sum_{j \colon \x_j\in S^1_T} \left( P_1 v (\x_j) - I_h P_1 v (\x_j) \right) \phii_j (x);
\]
where now the summation must include the vertices $\x_j \in S^1_T \cap \pp\W$, where $I_hP_1v(\x_j) = 0$ but $P_1v(\x_j) \neq 0$ in general. Since, by shape regularity, the number of indices in this sum is uniformly bounded and $0 \leq \phii_j \leq 1$, we can proceed as in Proposition~\ref{prop:stabIh} to obtain
\[
  \iint_{T \times S^1_T} \frac{| (P_1 v  - I_h P_1 v) (x) - (P_1 v  - I_h P_1 v) (y)|^2}{|x-y|^{n+2s}} \diff y \diff x  
    \le \frac{C(n,\sigma)}{1-s} h_T^{n-2s} \sum_{j \colon \x_j\in S^1_T} \left( (P_1 v - I_h P_1 v) (\x_j) \right)^2.
\]
The objective is now to show that, for all indices in the indicated range, 
\[
  \left( (P_1 v - I_h P_1 v) (\x_j) \right)^2 \leq C h_T^{-n+2(t-\alpha)}|v|_{H^t_\alpha(S^2_T)}^2,
\]
as this will imply the desired estimate. If $\x_j \in \W$ then we get
\[
  (P_1 v - I_h P_1 v) (\x_j) = 0,
\]
in view of \eqref{eq:linear}.
On the other hand if $\x_j \in \pp \W$, then $I_hP_1v(\x_j) = 0$. Let $\x_j \in e_j \subset \pp\W \cap S^1_T$ be a face and recall the scaled trace inequality
\[
  \| w \|_{L^2(e)} \leq C \left( h_e^{-1/2} \| w \|_{L^2(T)} + h_e^{1/2} \| \nabla w \|_{L^2(T)} \right) \quad \forall w\in H^1(T).
\]
This, for $w=v-P_1v$, together with an inverse inequality and the fact that $v|_{e_j} = 0$, yields
\begin{align*}
    |P_1 v (\x_j)| &\leq C h_T^{(1-n)/2} \| P_1 v \|_{L^2(e_j)} = C h_T^{(1-n)/2} \| P_1 v - v\|_{L^2(e_j)} \\
      &\leq C h_T^{(1-n)/2} \left( h_T^{-1/2} \|v - P_1v \|_{L^2(T)} + h_T^{1/2} \| \nabla (v - P_1 v) \|_{L^2(T)} \right).
\end{align*}
Property $v|_{e_j} = 0$ is a consequence of \cite[Theorem 2.3]{MR1258430}, because $v \in \tH^t_\alpha(\W) \subset \tH^1_\alpha(\W)$.
An application of \eqref{eq:est_P} for $s =0$ and $s=1$ allows us to conclude the proof.
\end{proof}

\begin{remark}[case $s=0$]
\label{rem:s_equals_0}
We briefly comment that Proposition~\ref{prop:interpolation_boundary} (boundary interpolation estimate)  can be extended to $s=0$. In fact, if $T \in \calT_h^\pp$, and $t$ and $\alpha$ are as in Proposition~\ref{prop:interpolation_boundary}, then we have
\[
  \| v - I_h v \|_{L^2(T)} \leq C h_T^{t - \alpha} |v|_{H_\alpha^t(S^2_T)},
\]
for every $v \in \widetilde{H}_\alpha^t(\Omega)$. The proof is a slight modification of the arguments needed for $s>0$ and, for brevity, we skip the details.
\eremk
\end{remark}

We are now finally in position to prove global interpolation error estimates. While Propositions~\ref{prop:interpolation_interior} (interior interpolation estimate) and \ref{prop:interpolation_boundary} (boundary interpolation estimate) may allow us to obtain error estimates over quasi-uniform meshes for functions in $H^t(\Omega)$, $t \in (1,2)$, the regularity results of Section~\ref{sec:regularity} show that these may be of little use for the approximation of problem \eqref{eq:obstacle}. We will, instead, exploit the regularity estimates in weighted Sobolev spaces $H^t_\alpha(\Omega)$ of Section \ref{sec:regularity} in conjunction with mesh grading towards the boundary to compensate for the singular behavior of the solution.

% It is well known that, in dimensions $n\ge3$, it is not possible to construct shape regular graded meshes that are able to restore optimal convergence rates for boundary singularities like the ones characterized in Theorem~\ref{thm:global_regularity} (global weighted regularity for $f=0$) and Corollary \ref{col:globregwithf} (global weighted Sobolev regularity for $f\ne 0$); see \cite[Section 4.2.3]{Apel}. Anisotropic meshes must be introduced to optimally handle singularities near edges, and this will violate the shape regularity assumption. For this reason, from now on we will assume that $n=2$, and that 
\AJS{The \rhn{preceding discussion motivates the use of graded meshes. In addition, these meshes must be shape regular for Propositions \ref{prop:interpolation_interior} and \ref{prop:interpolation_boundary} to hold.} For these reasons the meshes $\T$ that we consider will be constructed as follows.} Given a mesh parameter $h>0$ and $\mu \in [1,2]$ every element $T \in \T$ satisfies
\begin{equation}
\label{eq:mesh_grad}
%\tag{G}
  \begin{dcases}
    h_T \approx C(\sigma) h^\mu, & T \in \calT_h^\pp \\ 
    h_T \approx C(\sigma) h \dist(T,\pp\W)^{(\mu-1)/\mu}, & T \in \calT_h^\circ.
  \end{dcases}
\end{equation}

\begin{remark}[dimension of $V_h$]
\label{rem:cardinality}
\AJS{Following \cite[Lemma 4.1]{BKP:79} it is not difficult to see that the space $V_h$ constructed over the mesh $\T$ that satisfies \eqref{eq:mesh_grad} will satisfy 
\[
  \dim V_h \approx \begin{dcases}
                     h^{(1-n)\mu}, & \mu > \frac{n}{n-1}, \\
                     h^{-n}|\log h|, & \mu  = \frac{n}{n-1}, \\
                     h^{-n}, & \mu < \frac{n}{n-1}.
                   \end{dcases}
\]
Indeed, since 
% we are in two dimensions and 
the mesh is assumed shape regular, we have that
\[
  \dim V_h \leq (n+1) \sum_{T \in \T} 1 
    \leq C(\sigma) \left(  \sum_{T \in \calT_h^\circ} h_T^{-n}\int_T \diff x + \sum_{T \in \calT_h^\pp} h_T^{-n} \int_T \diff x \right).
\]
Over $\calT_h^\pp$, because $\cup_{T \in \calT_h^\pp} T$ defines a layer around the boundary of thickness about $h^\mu$, we have
\[
  \sum_{T \in \calT_h^\pp} h_T^{-n} \int_T \diff x \leq C h^{-n\mu} \sum_{T \in \calT_h^\pp} \int_T \diff x \leq C h^{(1-n)\mu}.
\]
On the other hand, for $\calT_h^\circ$ we have
\[
  \sum_{T \in \calT_h^\circ} h_T^{-n}\int_T \diff x \leq C h^{-n} \int_{h^\mu}^{\diam(\W)} \rho^{-\AJS{n}(\mu-1)/\mu} \diff \rho = 
                   \begin{dcases}
                     h^{(1-n)\mu}, & \mu > \frac{n}{n-1}, \\
                     h^{-n}|\log h|, & \mu  = \frac{n}{n-1}, \\
                     h^{-n}, & \mu < \frac{n}{n-1}.
                   \end{dcases}
\]
In other words, if we wish that the dimension of $V_h$ scaled like (up to logarithmic factors) $h^{-n}$ we must set the grading to be $\mu\leq n/(n-1)$.

For future reference we record that, if we insist on setting $\mu = 2$, then we obtain
\[
  \dim V_h = \begin{dcases}
                     h^{-2}|\log h|, &  n=2, \\
                     h^{-4}, & n=3.
                   \end{dcases}
\]
In three dimensions $\mu=2$ does not yield an optimal number of degrees of freedom.
}
% In conclusion, since $\mu \in[1, 2]$ implies $-2(\mu-1)/\mu \in [-1,0]$, we get
% \[
%   \dim V_h \approx h^{-2} + h^{-2} \int_{h^\mu}^{\diam(\W)} \rho^{-2(\mu-1)/\mu} \diff \rho \leq C h^{-2} |\log h|.
% \]
% The extreme value $\mu=2$ is responsible for the logarithmic factor.
\eremk
\end{remark}

Before we proceed further, we present the following inequality regarding the localization of fractional order Sobolev seminorms, and refer the reader to \cite{Faermann,Faermann2} for a proof:
\begin{equation}
\label{eq:faermann_new}
  |v|_{H^s(\W)}^2 \leq \sum_{T \in \T} \left[ \iint_{T \times S^1_T} \frac{|v (x) - v (y)|^2}{|x-y|^{n+2s}} \diff y \diff x 
    + \frac{2 \w_{n-1}}{s h_T^{2s}} \, \| v \|^2_{L^2(T)} \right].
\end{equation}

Let us now show a global interpolation estimate for functions in $\tH^{1+s-2\eps}_{1/2-\eps}(\Omega)$, in two dimensions, over graded meshes that satisfy \eqref{eq:mesh_grad}.

\begin{theorem}[global interpolation estimate]
\label{thm:interpolation}
\AJS{Let $\T$ be shape regular and satisfy the mesh grading condition \eqref{eq:mesh_grad} with $\mu\in[1,2]$. Assume, in addition, that $t \in (1,2)$ and $\eps\in(0,1/4)$. Define
\[
  \alpha = \begin{dcases}
             \left(\tfrac{\mu-1}\mu\right)(t - s), & s \neq \frac12, \\
             \left(\tfrac{\mu-1}\mu\right)\left(t - \frac12 - \eps \right), & s = \frac12.
           \end{dcases}
\]
% and assume that it satisfies the condition $\alpha > t - s -1/2$.
% Let $n=2$ and $\T$ be shape regular and satisfy the mesh grading condition \eqref{eq:mesh_grad} with $\mu=2$.
Then, there is a constant $C$ that depends only on $s$, $\Omega$ and $\sigma$ such that,
\begin{equation} \begin{aligned}
\label{eq:interpolacion}
 & | v - I_h v |_{\tHs} \leq C h^{t-s} |v|_{\tH^{t}_{\alpha}(\Omega)} & (s \ne 1/2), \\
 & | v - I_h v |_{\widetilde H^{1/2}(\Omega)} \leq \frac{C}{\eps} h^{t-1/2-\eps} |v|_{\tH^{t}_{\alpha}(\Omega)} & (s = 1/2),
\end{aligned} \end{equation}
for all $v \in \tH^{t}_{\alpha}(\Omega)$}.
% $v \in \tH^{1+s-2\eps}_{1/2-\eps}(\Omega)$.
\end{theorem}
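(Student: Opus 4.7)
The plan is to control $|w|_{\tHs}^{2}$ with $w = v - I_h v$ by reducing it to a sum of local contributions over the element patches $T\times S^{1}_{T}$, applying the local interpolation estimates of Propositions~\ref{prop:interpolation_interior} and \ref{prop:interpolation_boundary}, and balancing the resulting factors $h_{T}^{\bullet}$ through the grading condition \eqref{eq:mesh_grad}. The prescribed value $\alpha = \frac{\mu-1}{\mu}(t-s)$ is chosen so that the exponents of the boundary distance $\delta_{T}$ arising from interior and boundary elements exactly cancel the grading contribution and leave a uniform $h^{2(t-s)}$.

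Since both $v$ and $I_h v$ vanish on $\cW$, I would split the $\rn\times\rn$ integral defining $|w|_{\tHs}^{2}$ into an inner piece over $\W\times\W$ and a symmetric tail over $\W\times\cW$. Using polar coordinates around $x\in\W$, the tail is controlled by $\frac{\omega_{n-1}}{s}\int_{\W}|w(x)|^{2}\delta(x)^{-2s}\diff x$. The inner piece is then localized by the Faermann-type inequality \eqref{eq:faermann_new}, which reduces it to a sum of patch seminorms plus mass terms $h_{T}^{-2s}\|w\|_{L^{2}(T)}^{2}$.

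For $T\in\calT_h^\circ$, Proposition~\ref{prop:interpolation_interior} yields $\iint_{T\times S^{1}_{T}}(\cdot)\le C h_{T}^{2(t-s)}|v|_{H^{t}(S^{2}_{T})}^{2}$; since $\delta(x,y)\approx\delta_{T}:=\dist(T,\pp\W)$ on $S^{2}_{T}$ (by shape regularity and the grading), the weight can be inserted, giving $|v|_{H^{t}(S^{2}_{T})}^{2}\le C\delta_{T}^{-2\alpha}|v|_{H^{t}_{\alpha}(S^{2}_{T})}^{2}$. Inserting $h_{T}\approx h\,\delta_{T}^{(\mu-1)/\mu}$, the net exponent of $\delta_{T}$ is $2(t-s)\frac{\mu-1}{\mu}-2\alpha=0$, leaving the local contribution $Ch^{2(t-s)}|v|_{H^{t}_{\alpha}(S^{2}_{T})}^{2}$. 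For $T\in\calT_h^\pp$, Proposition~\ref{prop:interpolation_boundary} already incorporates the weight, so $\iint_{T\times S^{1}_{T}}(\cdot)\le Ch_{T}^{2(t-s-\alpha)}|v|_{H^{t}_{\alpha}(S^{2}_{T})}^{2}$; with $h_{T}\approx h^{\mu}$ this again reduces to $Ch^{2(t-s)}|v|_{H^{t}_{\alpha}(S^{2}_{T})}^{2}$. The $L^{2}$ terms in \eqref{eq:faermann_new} are handled analogously via Proposition~\ref{prop:propsIh} and Remark~\ref{rem:s_equals_0}. Summation over $T$ with finite overlap of patches completes the inner estimate.

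For the tail $\int_{\W}|w|^{2}\delta^{-2s}\diff x$ I would again separate the element types. On $T\in\calT_h^\circ$ one has $\delta\ge\delta_{T}\ge Ch^{\mu}$, so $\delta^{-2s}\le\delta_{T}^{-2s}$, and combining with $\|w\|_{L^{2}(T)}^{2}\le Ch_{T}^{2t}|v|_{H^{t}(S^{2}_{T})}^{2}$, inserting the weight, and using again $\delta_{T}\ge Ch^{\mu}$, one recovers the rate $h^{2(t-s)}$. On $T\in\calT_h^\pp$, where $\delta$ may be arbitrarily small, I would appeal to a one-sided fractional Hardy inequality: since $w$ vanishes on $\pp T\cap\pp\W$, for $s\neq 1/2$ one has
\[
\int_{T}|w|^{2}\delta^{-2s}\diff x\le C(s)\iint_{T\times T}\frac{|w(x)-w(y)|^{2}}{|x-y|^{n+2s}}\diff y\diff x,
\]
and this is bounded as before via Proposition~\ref{prop:interpolation_boundary}. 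The main obstacle is the critical case $s=1/2$, where the one-sided Hardy degenerates. I would circumvent it by the pointwise bound $\delta^{-1}\le\diam(\W)^{2\eps}\delta^{-1-2\eps}$ and applying Hardy at the supercritical level $\tfrac12+\eps$. The Hardy constant then behaves like $1/\eps$, and running the whole argument at effective regularity $s+\eps$ costs a factor $h^{-2\eps}$, producing the second estimate in \eqref{eq:interpolacion}.
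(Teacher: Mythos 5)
Your treatment of the inner part $|v-I_hv|_{H^s(\Omega)}^2$ via the Faermann localization \eqref{eq:faermann_new}, the two local interpolation estimates (Propositions~\ref{prop:interpolation_interior} and \ref{prop:interpolation_boundary}), and the observation that the grading \eqref{eq:mesh_grad} together with the prescribed $\alpha$ collapses the $\delta_T$--dependence, is exactly the paper's argument, and the $\eps$--shift for $s=1/2$ is also identical. Where you diverge is the tail $\calI_{\mathcal O}\approx\int_\Omega|w(x)|^2\delta(x)^{-2s}\,dx$ with $w=v-I_hv$, and here there is a gap.

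The paper does \emph{not} localize this term. Since $w\in\tHs$, it applies a global fractional Hardy inequality over $\Omega$ (Dyda for $s>1/2$, Grisvard for $s<1/2$, and Dyda at the shifted level $1/2+\eps$ for $s=1/2$), which reduces $\calI_{\mathcal O}$ directly to $\|w\|_{H^s(\Omega)}^2$ or $|w|_{H^s(\Omega)}^2$ and hence to the inner estimate already established. You instead invoke, for $T\in\calT_h^\pp$, an element--wise inequality $\int_T|w|^2\delta^{-2s}\,dx\le C\iint_{T\times T}|w(x)-w(y)|^2/|x-y|^{n+2s}\,dy\,dx$, which for $s>1/2$ is a nonstandard result that would itself need a proof with a constant uniform over all admissible simplex shapes and scales. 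More seriously, $\calT_h^\pp$ is defined by $S^1_T\cap\pp\Omega\neq\emptyset$ and therefore contains elements that meet $\pp\Omega$ only at a vertex or an edge (where $\delta$ is not a distance to a face of $T$ and $w$ vanishes only on a lower--dimensional subset of $\pp T$), and even elements with $T\cap\pp\Omega=\emptyset$, on which $w$ need not vanish anywhere on $\pp T$, so no such local Hardy inequality can hold there with a uniform constant. You would have to subdivide $\calT_h^\pp$ and supply separate arguments for each of these cases; none of this is carried out. The paper's global Hardy route uses only off--the--shelf inequalities applied to $w\in\tHs$ and avoids all of these issues at no extra cost, so I would adopt it.
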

\begin{proof}
From the localization estimate \eqref{eq:faermann_new} we obtain
\[
  |v - I_hv|_{H^s(\W)}^2 \leq \sum_{T \in \T} \left[ \iint_{T \times S^1_T} \frac{|(v-I_hv) (x) - (v-I_hv) (y)|^2}{|x-y|^{n+2s}} \diff y \diff x 
  + \frac{2 \w_{n-1}}{sh_T^{2s}} \| v-I_hv \|^2_{L^2(T)} \right].
\]
To shorten notation, for $T \in \T$, we set
\[
\calI_T = \iint_{T \times S^1_T} \frac{|(v-I_hv) (x) - (v-I_hv) (y)|^2}{|x-y|^{n+2s}} \diff y \diff x,
\quad
 \calL_T = \frac1{h_T^{2s}} \| v - I_h v \|_{L^2(T)}^2.
\]
To control the term $\calI_T$, we recall the notation \eqref{eq:defofinteriorandaway} and consider two cases:
\begin{enumerate}[$\bullet$]
  \item $T \in \calT_h^\circ$: \AJS{In this case we apply Proposition~\ref{prop:interpolation_interior} (interior interpolation estimate) 
%   with $t = 1+s-2\eps$ 
  and use the mesh grading condition \eqref{eq:mesh_grad} 
%   with $\mu=2$ 
  to obtain that
  \[
    \calI_T \leq \frac{C(n,\sigma, t)}{1-s} h^{2(t - s)} \dist(T,\pp\W)^{2(t-s)\frac{\mu-1}\mu} |v|_{H^{t}(S^2_T)}^2.
  \]
  In addition since, for all $x,y \in S^2_T$, we have that $\dist(T,\pp\W) \approx \delta(x,y)$, the right hand side of the previous expression can be modified so that the final estimate reads
  \[
    \calI_T \leq \frac{C(n,\sigma, t)}{1-s} h^{2(t-s)} |v|_{H^{t}_{\alpha}(S^2_T)}^2,
  \]
  where we used the prescribed value for $\alpha$.}
  
  \item $T \in \calT_h^\pp$: \AJS{We now use Proposition~\ref{prop:interpolation_boundary} (boundary interpolation estimate) 
%   with $\alpha = 1/2-\eps$ and $t = 1+s-2\eps$
  to arrive at
  \[
    \calI_T \leq \frac{C(n,\sigma, t)}{1-s}h_T^{2(t-s - \alpha)} |v |_{H^{t}_{\alpha}(S^2_T)}^2 
    \le \frac{C(n,\sigma, t)}{1-s}h^{2(t - s)} |v |_{H^{t}_{\alpha}(S^2_T)}^2
  \]
  as a consequence of the grading condition \eqref{eq:mesh_grad} and the prescribed value of $\alpha$}.
\end{enumerate}
Gathering the two previous estimates, \AJS{and using that the constants are} \rhn{non-decreasing in $t$, we deduce}
\begin{equation}
\label{eq:sumfirstterm}
  \sum_{T \in \T} \calI_T \leq C \AJS{h^{2(t - s)} |v|_{{\tH^{t}_{\alpha}(\Omega)}}^2}.
\end{equation}

It remains to control the local $L^2$-interpolation errors $\calL_T$. We again consider two cases:
\begin{enumerate}[$\bullet$]
  \item $T \in \calT_h^\circ$: Employing the error estimate of Proposition~\ref{prop:propsIh} (properties of $I_h$) for $p=2$ we have
  \[
    \calL_T \leq C h_T^{2(t-s)} |v|_{H^{t}(S^1_T)}^2.
  \]
  Then, as in the first case for $\calI_T$, we can use the mesh grading condition \eqref{eq:mesh_grad} and the fact that, for all $x,y \in S^1_T$, $\delta(x,y) \approx \dist(T,\pp\W)$ to obtain
  \[
    \calL_T \leq \AJS{C(\sigma,s) h^{2(t-s)} | v |_{H^{t}_{\alpha}(S^1_T)}^2},
  \]
  \AJS{where we also use the prescribed value for $\alpha$}.
  
  \item $T \in \calT_h^\pp$: Owing to Remark~\ref{rem:s_equals_0} (case $s=0$) we have
  \[
    \calL_T \leq \AJS{C(\sigma) h_T^{2(t - \alpha)} |v|_{H^{t}_{\alpha}(S^1_T)}^2}.
  \]
\end{enumerate}
\rhn{Using the mesh grading condition \eqref{eq:mesh_grad}, the prescribed value of $\alpha$, and the fact that $\mu \in [1,2]$ we see that}
\begin{equation}
\label{eq:sumL2terms}
  \sum_{T \in \T} \calL_T \leq \AJS{C h^{2(t-s)} | v|_{\tH^{t}_{\alpha}(\W)}^2}.
\end{equation}

Adding \eqref{eq:sumfirstterm} and \eqref{eq:sumL2terms} allows us to conclude that
\[
  \AJS{|v-I_hv|_{H^s(\W)} \leq C h^{t-s} |v|_{\tH^{t}_{\alpha}(\W)}},
\]
\AJS{where $\alpha = (t-s)(\mu-1)/\mu$ and $\mu \in [1,2]$}.

Finally, to bound the full $\tHs$-seminorm we need to provide a bound for the term
\[
  \calI_{\mathcal{O}} = \int_\W |(v-I_hv)(x)|^2\int_{\cW} \frac1{|x-y|^{n+2s}} \diff y \diff x \le C(s) \int_\Omega \frac{|(v-I_h v)(x)|^2}{\delta(x)^{2s}} \diff x.
\]
To do so, if $s \neq 1/2$ we employ the inequality
\[
  \calI_{\mathcal{O}} \leq C(s) \begin{dcases}
                               \| v \|_{H^s(\Omega)}^2, & s \in \left(0,\frac12\right), \\
                               | v |_{H^s(\Omega)}^2, & s \in \left(\frac12,1\right),
                             \end{dcases}
\]
whose proof is implicit in the proof of \cite[Corollary 2.6]{AcostaBorthagaray} and uses the fractional Hardy-type inequality of \cite[Theorem 1.1 (T1)]{Dyda} in the case $s>1/2$
\[
\int_\Omega \frac{|w(x)|^2}{\delta(x)^{2s}} \diff x \le C(s) \int_\Omega\int_\Omega
  \frac{|w(x)-w(y)|^2}{|x-y|^{n+2s}} \diff x \diff y
  \quad\forall \ w \in \widetilde{H}^s(\Omega),
\]
and is the content of \cite[Theorem 1.4.4.4]{Grisvard} for $s < 1/2$. 
We point out that, as shown in \cite{LossSloane}, in case $\Omega$ is a convex domain, the constant $C(s)$ in the Hardy-type inequality for $s>1/2$ behaves like $C(s) \approx (s-1/2)^{-2}$ for $s \downarrow 1/2$.
On the other hand, if $s=1/2$, an argument similar to the one provided in the proof of Theorem~\ref{teo:weighted}(weighted regularity of $w_g$) yields for any \AJS{$\eps \in (0,1/4)$}
\[
  \calI_{\mathcal{O}} \leq C \int_\W \frac{|(v-I_hv)(x)|^2}{\delta(x)}\diff x \leq C \diam(\W)^{2\eps} \int_\W \frac{|(v-I_hv)(x)|^2}{\delta(x)^{1+2\eps}}\diff x,
\]
where, in the last step, we used that, since $\W$ is bounded, $\delta(x) \leq \diam(\W)$. It remains to apply, once again, the above fractional Hardy-type inequality \cite[Theorem 1.1 (T1)]{Dyda}. Since this inequality involves the $H^{1/2+\eps}$-seminorm, the constant behaves as $\eps^{-2}$.
\end{proof}

%--------------------------------------------------------------------------------
\subsection{The numerical scheme and its analysis}
\label{sub:errest}
%--------------------------------------------------------------------------------

Having studied the interpolation operator $I_h$, introduced in Definition~\ref{def:CZinterpolant} (positivity preserving interpolation operator), we can finally proceed to present and analyze the numerical scheme we use to approximate the solution of \eqref{eq:obstacle}. In essence, this is a direct discretization inspired by the approximation of classical obstacle-type problems and their analyses; see \cite{MR0448949, MR3393323}.

We begin by introducing a discrete version of the admissible set as follows:
\begin{equation}
\label{eq:defofKh}
  \K_h = \left\{ v_h \in V_h : v_h \geq I_h \chi \right\}.
\end{equation}
\AJS{Note that, in general, $\K_h \not\subset \K$ and so our approximation scheme is nonconforming}. The discrete problem reads: find $u_h \in \K_h$ such that
\begin{equation}
\label{eq:scheme}
  (u_h, u_h - v_h)_s \leq \langle f, u_h - v_h \rangle, \quad \forall v_h \in \K_h.
\end{equation}
The existence and uniqueness of a solution to \eqref{eq:scheme} is standard. The approximation properties of this scheme are presented below.

\begin{theorem}[error estimate]
\label{thm:conv_rates}
Let $u$ be the solution to \eqref{eq:obstacle} and $u_h$ be the solution to \eqref{eq:scheme}, respectively. Assume that \AJS{$\chi \in \calX(\overline\W)$} satisfies \eqref{eq:defofvarrho} and that $f \in \F(\overline\W)$. \AJS{If $n \geq 2$, $\W$ is a convex polytope}, and the mesh $\T$ satisfies the grading hypothesis \eqref{eq:mesh_grad} with $\mu = 2$ then, \AJS{for $\eps \in (0,s/2)$}, we have that
\[ \begin{aligned}
 & |u-u_h|_{\tHs} \leq \frac{C}\eps h^{1-2\eps} & (s \ne 1/2), \\
  & |u-u_h|_{\widetilde H^{1/2}(\Omega)} \leq \frac{C}{\eps^2} h^{\AJS{1-3\eps}} & (s = 1/2),
\end{aligned} \]
where $C>0$ depends on $\chi$, $s$, $n$, $\Omega$, $\varrho$ and $\| f \|_{\F(\overline\W)}$.
In particular, setting $\eps \AJS{\approx} |\log h|^{-1}$ we obtain
\[ \begin{aligned}
 & |u-u_h|_{\tHs} \leq C h |\log h| & (s \ne 1/2), \\
 & |u-u_h|_{\widetilde H^{1/2}(\Omega)} \leq C h |\log h|^2 & (s = 1/2).
\end{aligned} \]
\end{theorem}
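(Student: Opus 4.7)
The strategy combines a Falk-type quasi-best-approximation estimate—adapted to the nonconforming setting $\K_h\not\subset\K$—with the global interpolation bound of Theorem~\ref{thm:interpolation} and the weighted regularity of Corollary~\ref{col:globregwithf}. The key observation is that $I_h u \in \K_h$: since $I_h$ preserves positivity (Definition~\ref{def:CZinterpolant}) and $u\geq\chi$, we have $I_h(u-\chi)\geq 0$, i.e.\ $I_h u \geq I_h\chi$, so $I_h u$ is an admissible test function in the discrete VI \eqref{eq:scheme}.

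First I would derive the abstract inequality
\[
  \tfrac12|u-u_h|_{\tHs}^2 \leq \tfrac12|u - I_h u|_{\tHs}^2 + \langle \lambda, I_h u - u_h\rangle
\]
by writing $|u-u_h|_{\tHs}^2 = (u-u_h, u - I_h u)_s + (u-u_h, I_h u - u_h)_s$, controlling the first term with Cauchy--Schwarz and Young's inequality, and for the second term combining the identity $(u,w)_s = \langle f+\lambda, w\rangle$ (which follows from \eqref{eq:innerprodisduality} together with $\Laps u = f+\lambda$) with the discrete VI tested against $v_h = I_h u$.

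Next I would bound the interpolation error by invoking Theorem~\ref{thm:interpolation} with $t = 1+s-2\eps$, $\mu = 2$, and the matching weight $\alpha = 1/2-\eps$, which equals the theorem's $(t-s)(\mu-1)/\mu$ when $s\neq 1/2$. Since Corollary~\ref{col:globregwithf} supplies $u \in \tH^{1+s-2\eps}_{1/2-\eps}(\Omega)$ with seminorm $\lesssim 1/\eps$, this yields $|u - I_h u|_{\tHs} \leq \frac{C}{\eps} h^{1-2\eps}$ for $s\neq 1/2$, and an analogous bound with an extra $1/\eps$ when $s=1/2$ due to the $\calI_\mathcal{O}$ contribution (where the $h^{1-2\eps}$ exponent must be replaced by $h^{1-3\eps}$ because the weight in Theorem~\ref{thm:interpolation} then involves an additional $\eps$ slack).

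The delicate step is controlling the Lagrange multiplier term. Since $\lambda\in C^{0,1-s}(\overline\Omega)$ (Theorem~\ref{thm:reg_lambda}) and $\supp\lambda \subset \Lambda \Subset\{\chi>0\}\Subset\Omega$, the pairing reduces to an integral over the interior compact set $\Lambda$, where both $u\in C^{1,s}$ (Proposition~\ref{prop:localHolder}) and $\chi\in C^{2,1}$ enjoy higher smoothness and the graded mesh is essentially quasi-uniform with size $\approx h$. Using $u_h\geq I_h\chi$ and $\lambda\geq 0$ produces
\[
  \langle\lambda, I_h u - u_h\rangle \leq \int_\Lambda \lambda\, I_h(u-\chi)\,dx.
\]
Inside $\Lambda$ we have $u-\chi=0$ and (since $u-\chi$ attains its minimum) $\nabla(u-\chi)=0$, so Taylor expansion gives $(u-\chi)(x) = O(\dist(x,\Lambda)^{1+s})$ locally. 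Consequently $|I_h(u-\chi)|\leq C h^{1+s}$ on nodes near $\partial\Lambda$, and $I_h(u-\chi)\equiv 0$ on elements whose patch lies entirely in $\Lambda$; the integrand is supported in an $O(h)$-strip around $\partial\Lambda$ whose measure is $O(h)$, so $\langle\lambda,I_h u - u_h\rangle \leq C h^{2+s}$, which is dominated by the interpolation term. Finally, combining these bounds yields $|u-u_h|_{\tHs}^2 \leq \frac{C}{\eps^2} h^{2(1-2\eps)}$ (and the corresponding $s=1/2$ variant), and the advertised logarithmic rate follows by setting $\eps \approx |\log h|^{-1}$. The main obstacle will be making the Lagrange bound rigorous, namely combining the complementarity $\lambda(u-\chi)\equiv 0$ with the vanishing of $\nabla(u-\chi)$ on $\Lambda$ to beat the naive $O(h^{1+s})$ estimate that would otherwise dominate the target rate.
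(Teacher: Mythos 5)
Your overall architecture matches the paper's: Galerkin orthogonality plus Young's inequality to isolate $|u-I_h u|_{\tHs}^2$, the interpolation estimate with $t=1+s-2\eps$ and $\alpha=1/2-\eps$ against Corollary~\ref{col:globregwithf}, and a consistency term $\langle\lambda, I_h u - u_h\rangle$ handled via positivity of $\lambda$ and $u_h - I_h\chi$. The $s=1/2$ bookkeeping is also right. However, your treatment of the Lagrange multiplier term has a gap.

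You reduce to $\int_\Lambda \lambda\, I_h(u-\chi)\,dx$ and argue that the integrand is supported in an $O(h)$-strip around $\partial\Lambda$ \emph{of measure $O(h)$}. That measure estimate requires regularity of the free boundary (e.g.\ finite $(n-1)$-Hausdorff measure of $\partial\Lambda$), which is neither assumed nor established. Without it, the strip measure is merely $o(1)$, and the resulting bound is $O(h^{1+s})$, which is too weak for small $s$ relative to the interpolation term $O(h^{2(1-2\eps)})$. The paper closes this unconditionally by an extra piece of information you have available but did not use: Theorem~\ref{thm:reg_lambda} gives $\lambda\in C^{0,1-s}(\overline\Omega)$, and on any transition element $T$ (one with $S^1_T\cap N\neq\emptyset$ and $T\cap\Lambda\neq\emptyset$) there is $x_N\in S^1_T$ with $\lambda(x_N)=0$, hence $|\lambda|\le C h_T^{1-s}$ on $T$. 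Pairing this with $|I_h(u-\chi)-(u-\chi)|\le C h_T^{1+s}$ yields the per-element bound $\J_T\le C h_T^2|T|$, and summing over the mesh gives $\le C h^2|\Omega|$ with no assumption on $|\partial\Lambda|$. (The paper phrases the integral as $\int\lambda[I_h(u-\chi)-(u-\chi)]$, which is equivalent to yours since $\lambda(u-\chi)\equiv0$.) Replacing your strip-measure step by the H\"older decay of $\lambda$ is the fix, and it also directly answers the difficulty you flag at the end of your plan.
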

\begin{proof}
After all the discussion about regularity of Section~\ref{sec:regularity} and preparatory steps, the proof of this result follows more or less standard arguments; see \cite[Theorem 4.1]{MR0448949}. However, it requires a combination of Sobolev and H\"older regularity results on the solution as it was first exploited in \cite[Theorems 3.1 and 4.4]{MR3393323}.

We begin by writing
\begin{align*}
  |u-u_h|_{\tHs}^2 &= (u-u_h,u-I_hu)_s + (u-u_h,I_hu-u_h)_s \\ &\leq \frac12 |u-u_h|_{\tHs}^2 + \frac12 |u-I_h u|_{\tHs}^2 + (u-u_h,I_hu-u_h)_s
\end{align*}
so that
\[
|u-u_h|_{\tHs}^2 \leq |u-I_h u|_{\tHs}^2 + 2 (u-u_h,I_hu-u_h)_s.
\]
\AJS{For the first term on the right hand side Corollary \ref{col:globregwithf} (global weighted Sobolev regularity for $f\ne0$) shows that we must
apply Theorem~\ref{thm:interpolation} (global interpolation estimate) with $t = 1+s-2\eps$ and $\alpha =\frac12 - \eps$ to deduce, first of all, that this forces us to set $\mu =2$ and that, in addition, we have}
\[ \begin{aligned}
& |u-I_h u|_{\tHs} \le C h^{1-2\eps} |u|_{\widetilde{H}^{1+s-2\eps}_{1/2-\eps} (\Omega)} \le C \frac{h^{1-2\eps}}{\eps} & (s \ne 1/2), \\
& |u-I_h u|_{\widetilde H^{1/2}(\Omega)} \le \frac{C}{\eps} h^{\AJS{1-3\eps}} |u|_{\widetilde{H}^{\AJS{3/2}-2\eps}_{1/2-\eps} (\Omega)} \le C \frac{h^{\AJS{1-3\eps}}}{\eps^2} & (s = 1/2).
\end{aligned} \]
It remains to bound the second term. To do this we use \eqref{eq:innerprodisduality} to obtain
\[
  (u,I_hu -u_h)_s = \langle \Laps u, I_h u-u_h \rangle.
\]
In addition, since $I_h$ is positivity preserving, we have that $I_h u \in \K_h$ and so it is a legitimate test function for \eqref{eq:scheme}. Adding \eqref{eq:scheme} to the previous equality then yields
\begin{align*}
  (u-u_h,I_hu -u_h)_s &\leq \langle \lambda, I_hu - u_h \rangle = \int_\W \lambda (I_hu - u_h) \diff x\\
      &= \int_\W \lambda (u-\chi) \diff x + \int_\W \lambda (I_h\chi - u_h) \diff x + \int_\W \lambda [I_h(u - \chi) - (u-\chi)] \diff x, 
\end{align*}
where we have used the regularity Theorem~\ref{thm:reg_lambda} (H\"older regularity of $\lambda$) to transform the pairing into an integral. Next, we apply the complementarity conditions \eqref{eq:complementarity} to conclude that $\lambda (u-\chi)=0$. Finally, we use, once again, the complementarity conditions to see that $\lambda \geq 0$ and, since $u_h \in \K_h$, then the middle term is non-positive and can be dropped. Consequently,
\[
(u-u_h,I_hu -u_h)_s \leq \int_\W \lambda [I_h(u - \chi) - (u-\chi)]\diff x = \sum_{T \in \T} \int_{T} \lambda [I_h(u - \chi) - (u-\chi)] \diff x = \sum_{T \in \T} \J_T .
\]

We continue by partitioning the terms in the previous sum into three cases:
\begin{enumerate}[$\bullet$]
  \item $T \subset N$: The complementarity condition \eqref{eq:complementarity} then implies that $\lambda = 0$, whence $\J_T = 0$.
  
  \item $T$ is such that $S^1_T \subset \Lambda$: In this case $u = \chi$ and, again, $\J_T = 0$.
  
  \item $T$ is such that $S^1_T \cap N \neq \emptyset$ and $T \cap \Lambda \neq \emptyset$: The first condition yields the existence of $x_N \in S^1_T \cap N$ for which $\lambda(x_N) = 0$. Since $\lambda\in C^{\AJS{0,1-s}}(\overline{\Omega})$, according to Theorem~\ref{thm:reg_lambda} (H\"older regularity of $\lambda$), we infer that
  \[
    |\lambda(x)|\leq C(\sigma)h_T^{1-s} \quad \forall x \in T.
  \]
  The second condition gives rise to the existence of a point $x_\Lambda \in T$ where $u(x_\Lambda) = \chi(x_\Lambda)$. Using the facts that $u-\chi \in C^{1,s}(\W)$, which can be deduced from Remark~\ref{rem:kappa} (interior regularity estimate), and $T$ is uniformly away from $\partial\W$ because $\varrho>0$ in \eqref{eq:defofvarrho}, we obtain
    \[
    |(u-\chi)(x)| \leq C(\sigma) h_T^{1+s} \quad \forall x \in T.
  \]
  The local stability estimate of Proposition~\ref{prop:propsIh} (properties of $I_h$) with $p=\infty$ then implies
  \[
    |I_h(u-\chi)(x)-(u-\chi)(x)| \leq C(\sigma) h_T^{1+s}.
  \]
  In conclusion, in this case we have
  \[
    \J_T \leq C(\sigma) h_T^2|T|.
  \]
\end{enumerate}
The previous considerations then lead to
\[
  (u-u_h,I_hu -u_h)_s \leq C(\sigma) \sum_{T \in \T} h_T^2 |T|.
\]
Since the mesh grading condition \eqref{eq:mesh_grad} yields $h_T \leq C h$ for all $T \in \T$, this completes the proof.
\end{proof}

\begin{remark}[complexity] \label{rem:conv_rates}
\AJS{Let us take another look at the estimates shown in Theorem~\ref{thm:conv_rates} (error estimate). We will consider two separate cases.

In two dimensions ($n=2)$, since the mesh is assumed to verify the grading condition \eqref{eq:mesh_grad} with $\mu =2 = n/(n-1)$, we have that $\dim V_h \approx h^{-2}|\log h|$, according to Remark~\ref{rem:cardinality} (dimension of $V_h$). This allows us to interpret the assertion of Theorem~\ref{thm:conv_rates} (error estimate) in terms of degrees of freedom as follows
\[ \begin{aligned}
 & |u - u_h |_{\tHs} \leq C (\dim V_h)^{-1/2} (\log \dim V_h )^{3/2} & (s \ne 1/2), \\
 & |u - u_h |_{\widetilde H^{1/2}(\Omega)} \leq C (\dim V_h)^{-1/2} (\log \dim V_h )^{5/2} & (s = 1/2),
\end{aligned} \]
which shows that in this case our method is near optimal.

On the other hand, in three dimensions ($n=3$) we have that
\[
  \dim V_h \approx h^{-4},
\]
see Remark~\ref{rem:cardinality} (dimension of $V_h$). Therefore, the estimate will read
\[ \begin{aligned}
 & |u - u_h |_{\tHs} \leq C (\dim V_h)^{-1/4} \log \dim V_h  & (s \ne 1/2), \\
 & |u - u_h |_{\widetilde H^{1/2}(\Omega)} \leq C (\dim V_h)^{-1/4} (\log \dim V_h )^{2} & (s = 1/2),
\end{aligned} \]
which is not near optimal anymore. One could, in principle, repeat the proof of Theorem~\ref{thm:conv_rates} (error estimate) with $\mu = 3/2$ so that $\dim V_h$ and $h$ have the correct scaling. In this case, however, we need to revisit the weighted regularity estimate for the linear problem \rhn{and the interpolation error estimate \eqref{eq:est_P}. For the former,} instead of Theorem~\ref{teo:weighted} (weighted regularity of $w_g$), we use that
$w_g \in \widetilde{H}^t_\alpha(\Omega)$ for $t < 1+s$ and $\alpha > t - s - 1/2$ (cf. \cite[Proposition 2.2]{BoCi19}).
\rhn{For the latter, we resort to \cite[Lemma 23.1]{Tartar07} to interpolate the error estimates
\[
\| v - P_1 v \|_{L^2(S^2_T)} \leq C h_T^{t-\alpha_1} | v |_{H^t_{\alpha_1}(S^2_T)},
\quad
  \| \nabla(v - P_1 v) \|_{L^2(S^2_T)} \leq C h_T^{t-\alpha_2-1} | v |_{H^t_{\alpha_2}}(S^2_T),
\]
with $\alpha_1 \in [0,1]$ and $\alpha_2\in [0,t-1)$ to obtain \eqref{eq:est_P} with $\alpha\in[0,1-2s+t s)$
\[
 |v-P_1v|_{H^s(S^2_T)} \leq C h_T^{t-\alpha-s} |v|_{H^t_\alpha(S^2_T)}.
\]
This gives a range for $\alpha \in (t-s-1/2,1-2s+t s)$ which turns out to be non-empty for all $s\in(0,1)$. To enforce the condition for $\alpha$ of Theorem \ref{thm:interpolation} (global interpolation estimate) with $\mu=3/2$, i.e.
\[
  \alpha = \left(\frac{\mu-1}\mu\right)(t-s) > t - s -\frac12,
\]
which also satisfies $\alpha<1-2s+t s$ for all $t\in(1,2)$,
we are thus forced to restrict $t < \tfrac34 +s$.} In other words, the full regularity of the solution cannot be exploited, and this would lead to suboptimal error estimates in terms of $h$. In conclusion, in either case we obtain a \rhn{suboptimal convergence rate $(\dim V_h)^{-1/4}$ (up to logarithmic factors) for dimension $n=3$.}} 
\eremk
\end{remark}

%%%%%%%%%%%%%%%%%%%%%%%%%%%%%%%%%%%%%%%%%%%%%%%%%%%%%%%%%%%%%%%%%%%%%%%%%%%%%%%%%%
\section{Numerical illustrations}
\label{sec:numex}
%%%%%%%%%%%%%%%%%%%%%%%%%%%%%%%%%%%%%%%%%%%%%%%%%%%%%%%%%%%%%%%%%%%%%%%%%%%%%%%%%%

In this section we assess the sharpness of Theorem~\ref{thm:conv_rates} (error estimate) by displaying the results of numerical experiments performed in two-dimensional domains, and we illustrate the qualitative differences between fractional Laplacians of different orders with an example.

The experiments were carried out with the aid of the code documented in \cite{ABB}; we refer to that work for details on the implementation and a discussion on the challenges that arise when computing the stiffness matrices.
The discrete minimization problems were solved by performing semismooth Newton iterations, as described in \cite[Section 5.3]{Bartels}. A brief explanation on how to construct graded meshes satisfying \eqref{eq:mesh_grad} can be found in \cite{AcostaBorthagaray}.

%--------------------------------------------------------------------------------
\subsection{Explicit solution}
\label{ss:explicit}
%--------------------------------------------------------------------------------

We first describe how to construct a non-trivial solution to \eqref{eq:obstacle} in the unit ball of $\rn$. For this domain, reference \cite{Dyda2016} explicitly expresses eigenfunctions of an operator closely related to the fractional Laplacian in terms of Jacobi polynomials and an $s$-dependent weight. For example, in dimension $n=2$ and using the Jacobi polynomial $P_2^{(s,0)}$ of degree two
\[
P_2^{(s,0)}(z)
= \frac{4 (s+1)(s+2) +4 (s+2)(s+3)(z-1) + (s+3)(s+4)(z-1)^2}{8},
\] 
define
\[
p^{(s)}(x) = P_2^{(s,0)}(2|x|^2-1), \qquad  u(x) =  \left(1- |x|^2\right)_+^s p^{(s)}(x), \qquad \tilde{f}(x) = 2^{2(s-1)} \Gamma(3-s)^2 \, p^{(s)}(x).
\]
Then, it holds that
\[
(-\Delta)^s u(x) = \tilde{f}(x), \quad x \in B_1.
\]
We now consider a smooth obstacle $\chi$ that coincides with $u$ in $\Lambda=\overline{B_{1/5}}$ and modify $\tilde{f}$ in $B_{1/5}$ so that within this contact set the strict inequality $(-\Delta)^s u > f$ holds. More precisely, we extend $\chi$ to $N=B_1 \setminus \overline{B_{1/5}}$ by using the Taylor polynomial of order two of $u$ on $\pp B_{1/5}$ and set
\[
f(x) = \tilde{f}(x) - 100 \, \left(\frac15- |x| \right)_+ .
\]

\AJS{Note that, as written, $f \notin \F(\overline\W)$. However, at the mesh level, it makes no difference if we smooth out the vertex of the cone $\left(\frac15- |x| \right)_+$ so that we have $f \in \F(\overline\W)$}.
\begin{figure}[ht]
	\centering
	\includegraphics[width=0.45\textwidth]{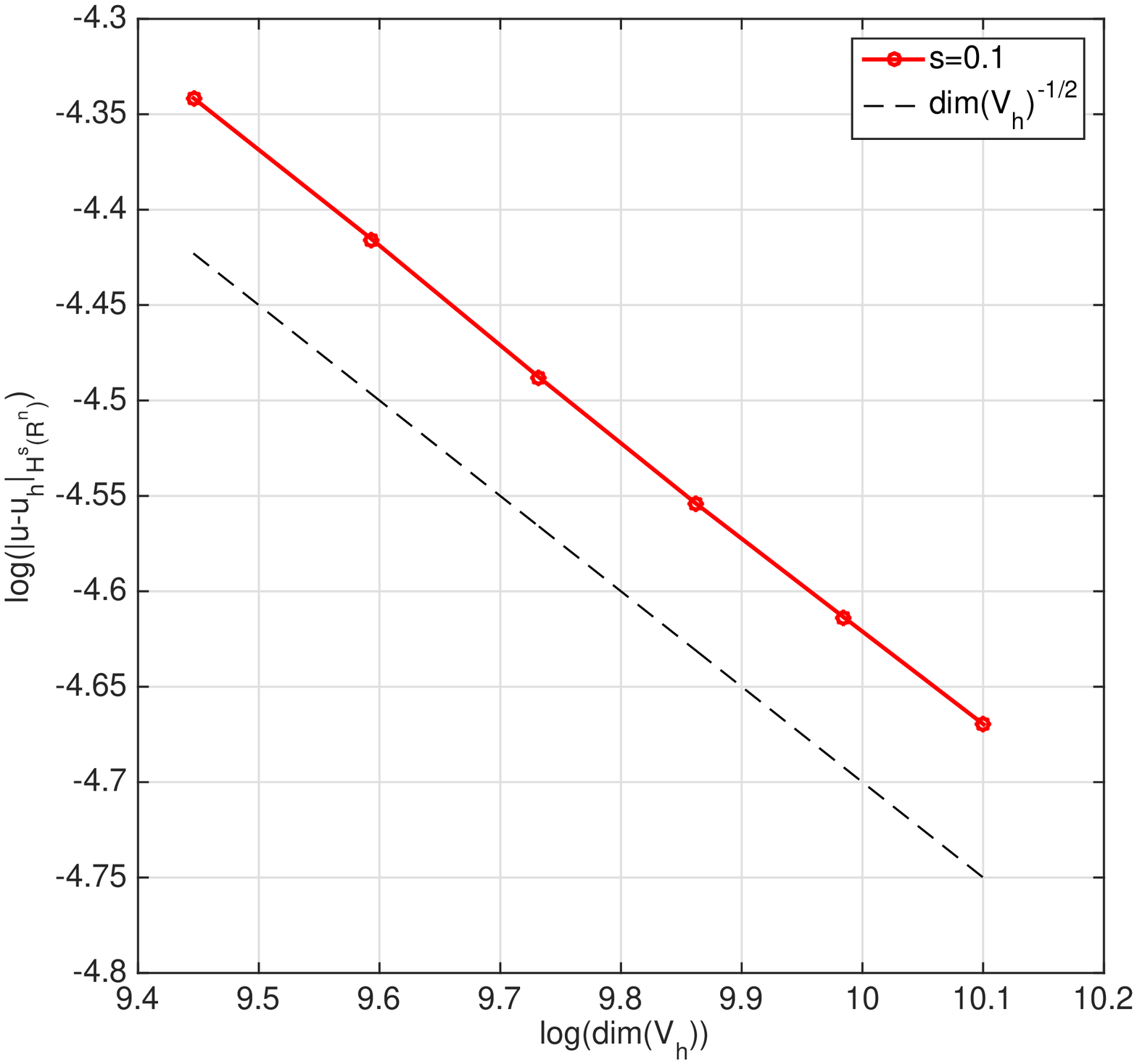} 
	\includegraphics[width=0.45\textwidth]{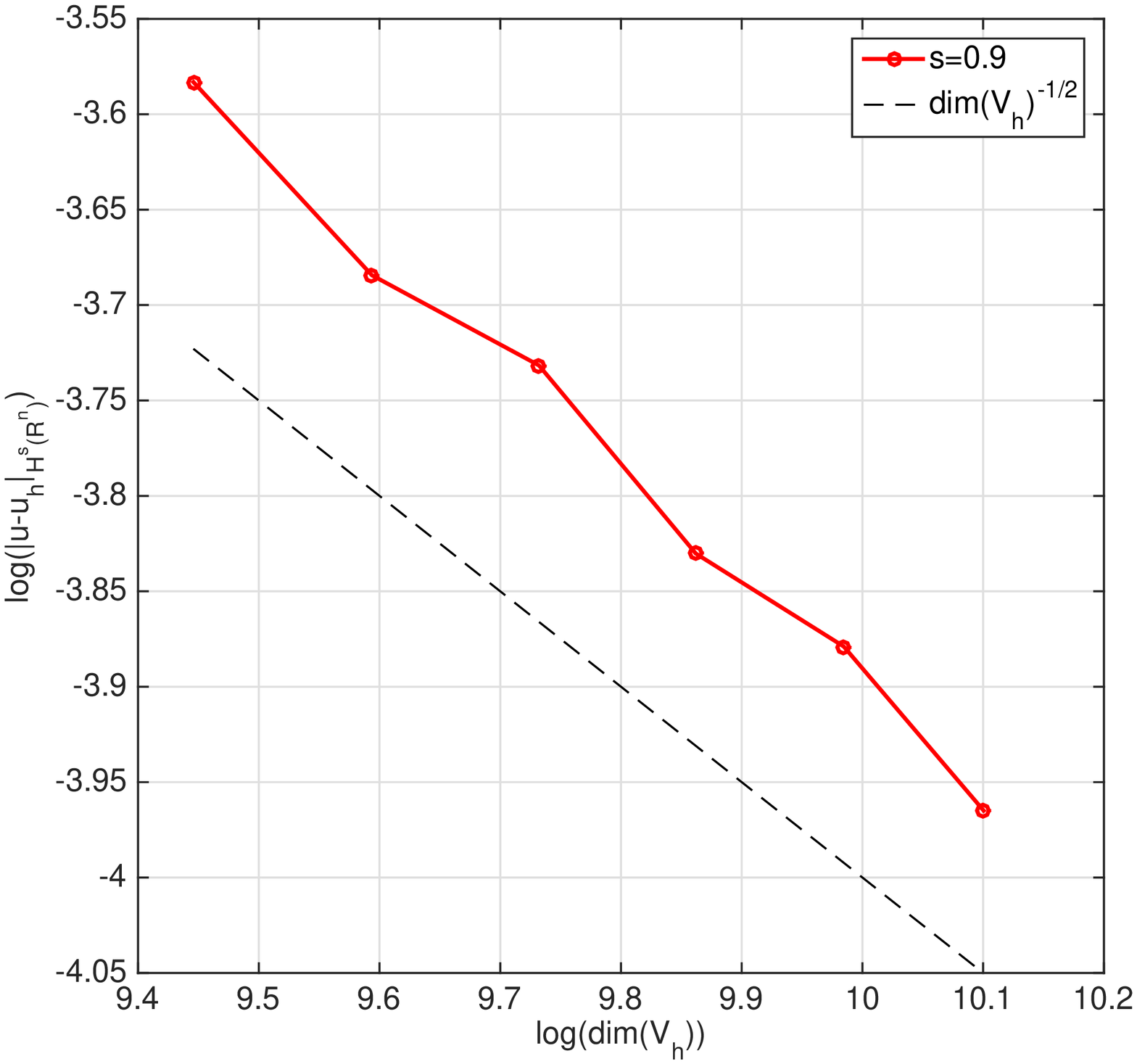} \\
\caption{Computational rate of convergence for the discrete solutions to the fractional obstacle problems described in section~\ref{ss:explicit} over meshes satisfying the grading condition \eqref{eq:mesh_grad} with $\mu=2$. The left panel shows the errors for $s=0.1$ and the right one for $s=0.9$. The rate observed in both cases is $\approx \dim(V_h)^{-1/2}$, in agreement with the theory.
}	\label{fig:conv_rates}
\end{figure}

We carried out computations for $s\in\{0.1, 0.9\}$ using meshes satisfying the grading condition \eqref{eq:mesh_grad} with $\mu = 2$ and different mesh size parameters $h$. 
Figure \ref{fig:conv_rates} shows that the observed convergence rates are in good agreement with either Theorem \ref{thm:conv_rates} (error estimate) or Remark \ref{rem:conv_rates} (complexity).

%--------------------------------------------------------------------------------
\subsection{Qualitative behavior} \label{ss:qualitative}
%--------------------------------------------------------------------------------

Finally, we consider problem \eqref{eq:obstacle}, posed in the unit ball $B_1 \subset \mathbb{R}^2$, with $f = 0$ and the obstacle 
\[ 
\chi (x) = \frac12 - |x - x_0|, \ \mbox{ with } x_0 = (1/4, 1/4) .
\]
Figure \ref{fig:qualitative} shows computed solutions for $s \in  \{0.1, 0.5, 0.9 \}$ over meshes graded according to \eqref{eq:mesh_grad} with $\mu =2$ and $24353$ degrees of freedom (this corresponds to $h \approx 0.025$). Figure \ref{fig:qualitative} also displays the discrete coincidence set, which contains a neighborhood of the singular point $x_0$. \AJS{After a suitable smoothing of the cone $|x-x_0|$,} both the obstacle $\chi$ and solution $u$ are globally Lipschitz and of class $H^{1+s}(\Omega)$ for all $s\in(0,1)$. We point out that away from $x_0$ but still within the coincidence set $\Lambda$, the obstacle $\chi$ is smooth, say of class $C^{2,1}$, and the regularity and approximation theories developed above apply. In particular, we observe that Theorem \ref{thm:conv_rates} (error estimate) is valid because the only critical point in its proof is the case $S_T^1\subset\Lambda$, for which $u=\chi$ regardless of smoothness.

\begin{figure}[ht]
	\centering
	\includegraphics[width=0.33\textwidth]{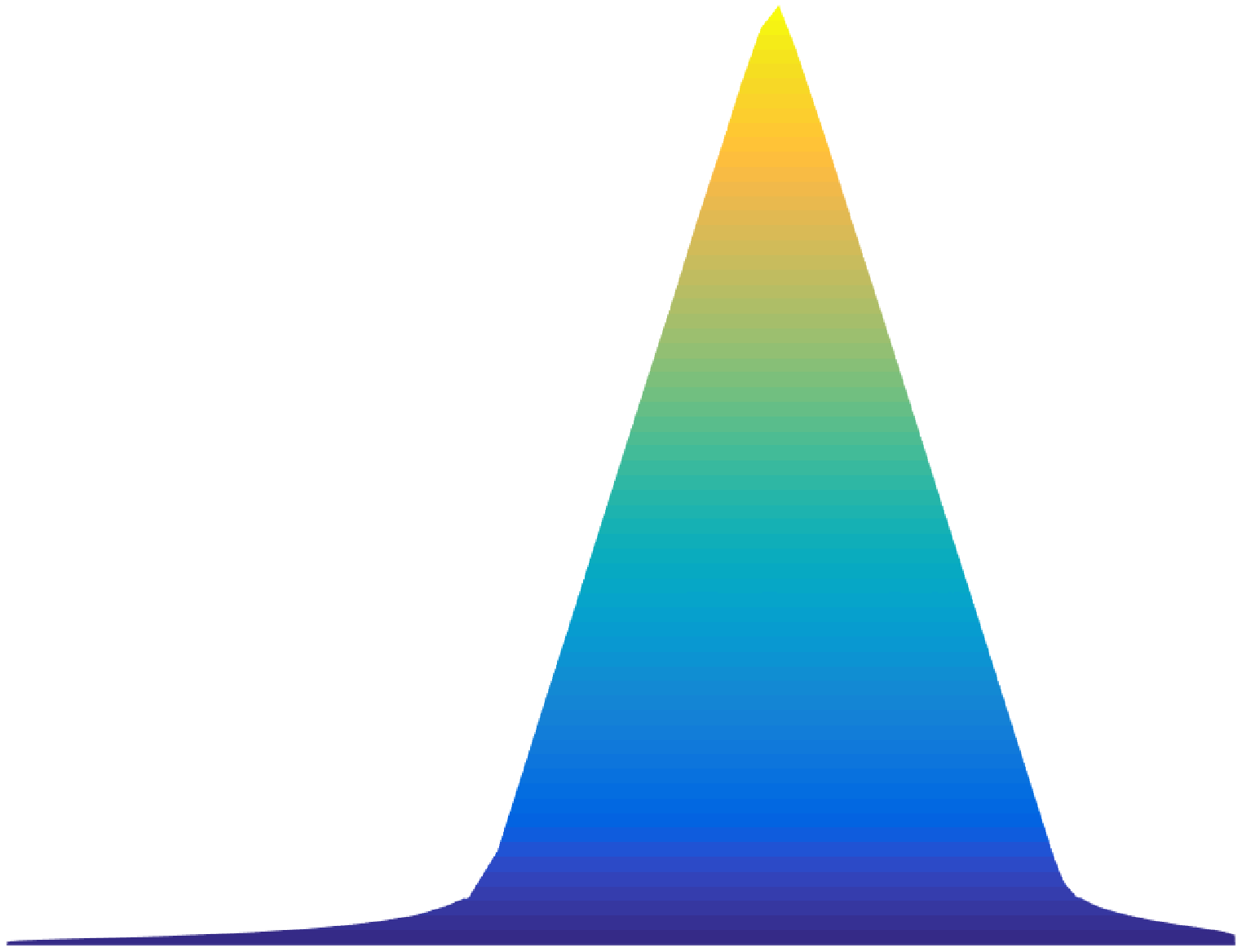}\hspace{-0.5cm} 
	\includegraphics[width=0.33\textwidth]{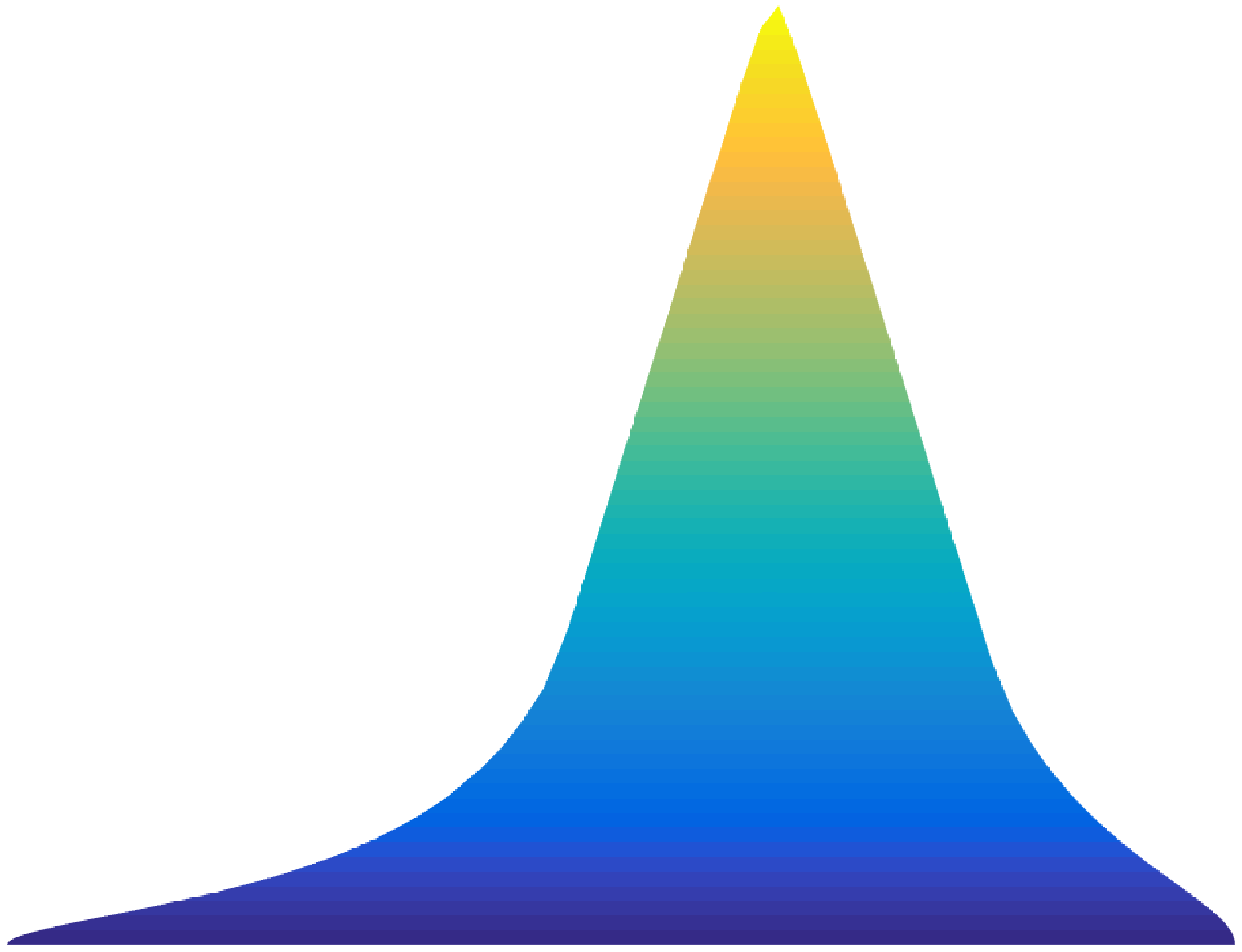} \hspace{-0.5cm}
	 	\includegraphics[width=0.33\textwidth]{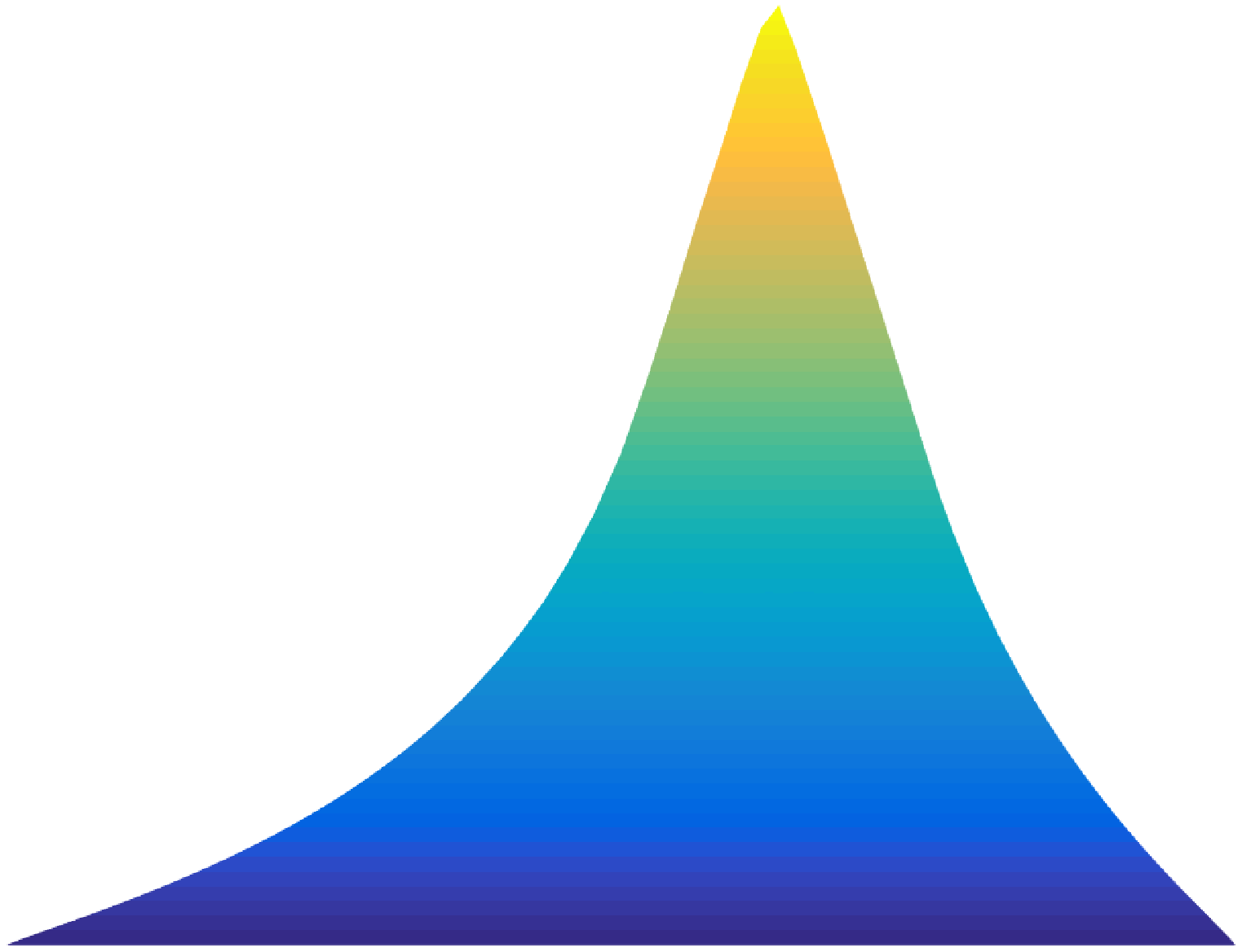} \\
	\includegraphics[width=0.36\textwidth]{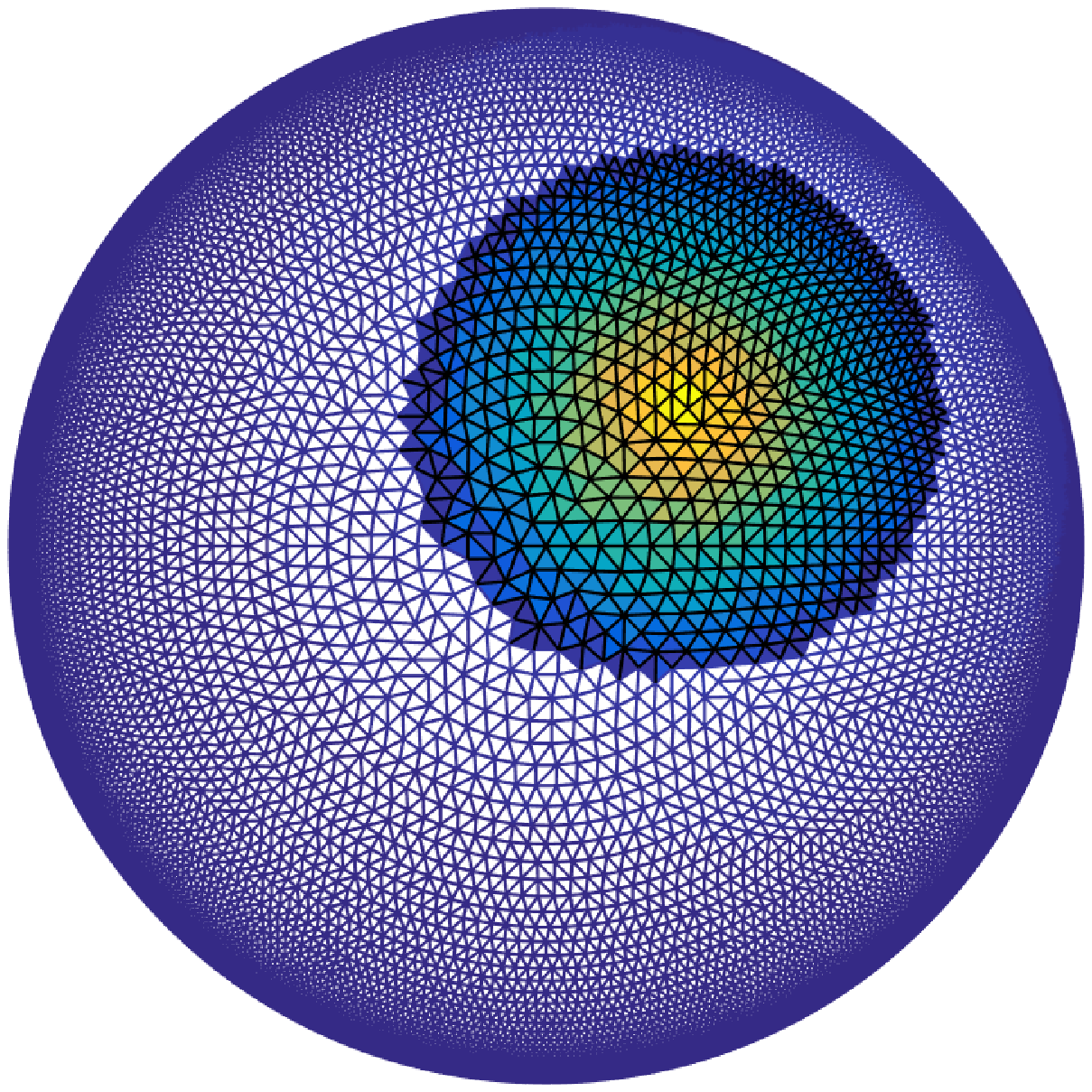} \hspace{-1cm}
		\includegraphics[width=0.36\textwidth]{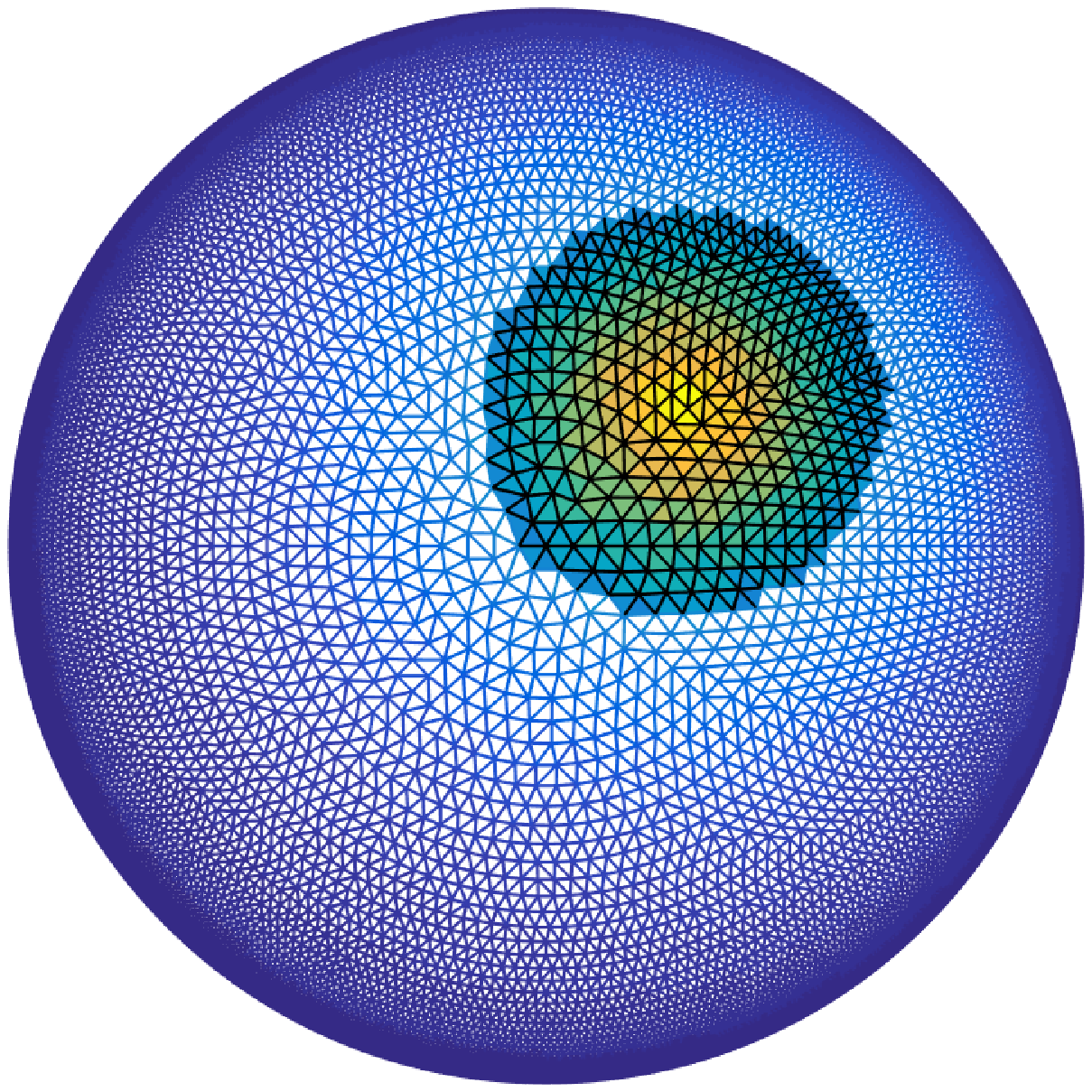} \hspace{-1cm}
	\includegraphics[width=0.36\textwidth]{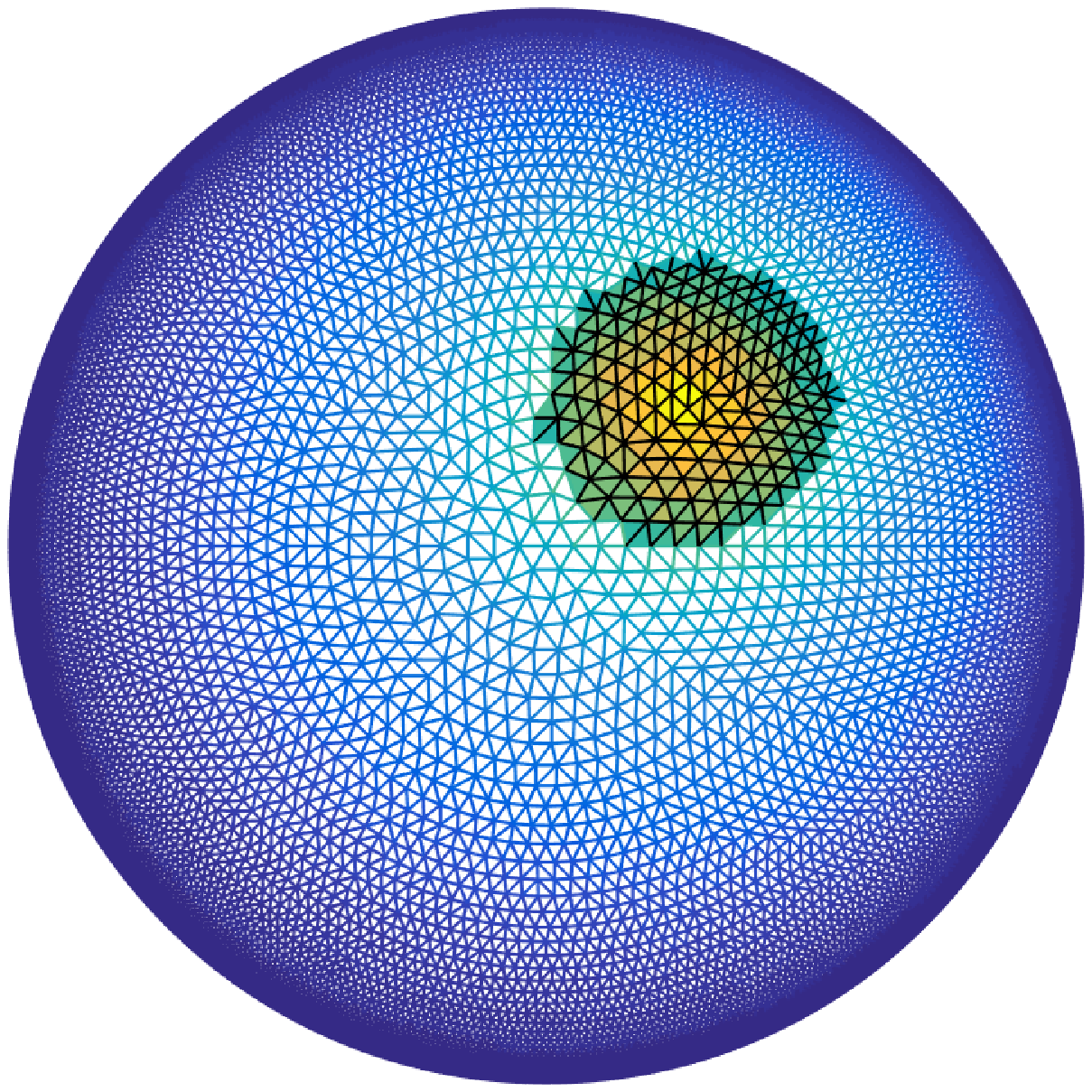}
\caption{Discrete solutions to the fractional obstacle problem for $s=0.1$ (left), $s=0.5$ (center) and $s=0.9$ (right), computed over meshes with $24353$ degrees of freedom, and graded according to \eqref{eq:mesh_grad} with $\mu=2$. Top: lateral view. Bottom: top view, with the discrete contact set highlighted.
}	\label{fig:qualitative}
\end{figure}

Qualitative differences between solutions for different choices of $s$ are apparent. While for $s=0.9$ the discrete solution resembles what is expected for the classical obstacle problem, the solution for $s=0.1$ is much flatter in the non-coincidence set $N$. Moreover, taking into account that the solution of the fractional obstacle problem is non-negative in $\W$ and that $u=\chi_+$ in the formal limit $s=0$, it is apparent that the coincidence set $\Lambda$ decreases with $s$ but always contains $x_0$ in its interior. This fact is verified by the experiments presented in Figure~\ref{fig:qualitative}. We observe that in the diffusion limit $s=1$, the solution is expected to detach immediately for the obstacle away from $x_0$ for a vanishing forcing $f$, whence $\Lambda = \{x_0\}$.

\jp{Finally, Figure \ref{fig:qualitative2} exhibits the convergence rates for these numerical experiments, which are in good agreement with the theoretical predictions. Because we lack \rhn{an explicit} expression for the solution of the obstacle problem in this case, we have used the discrete solutions displayed in Figure \ref{fig:qualitative} as surrogates.

\begin{figure}[ht]
	\centering
	\includegraphics[width=0.95\textwidth]{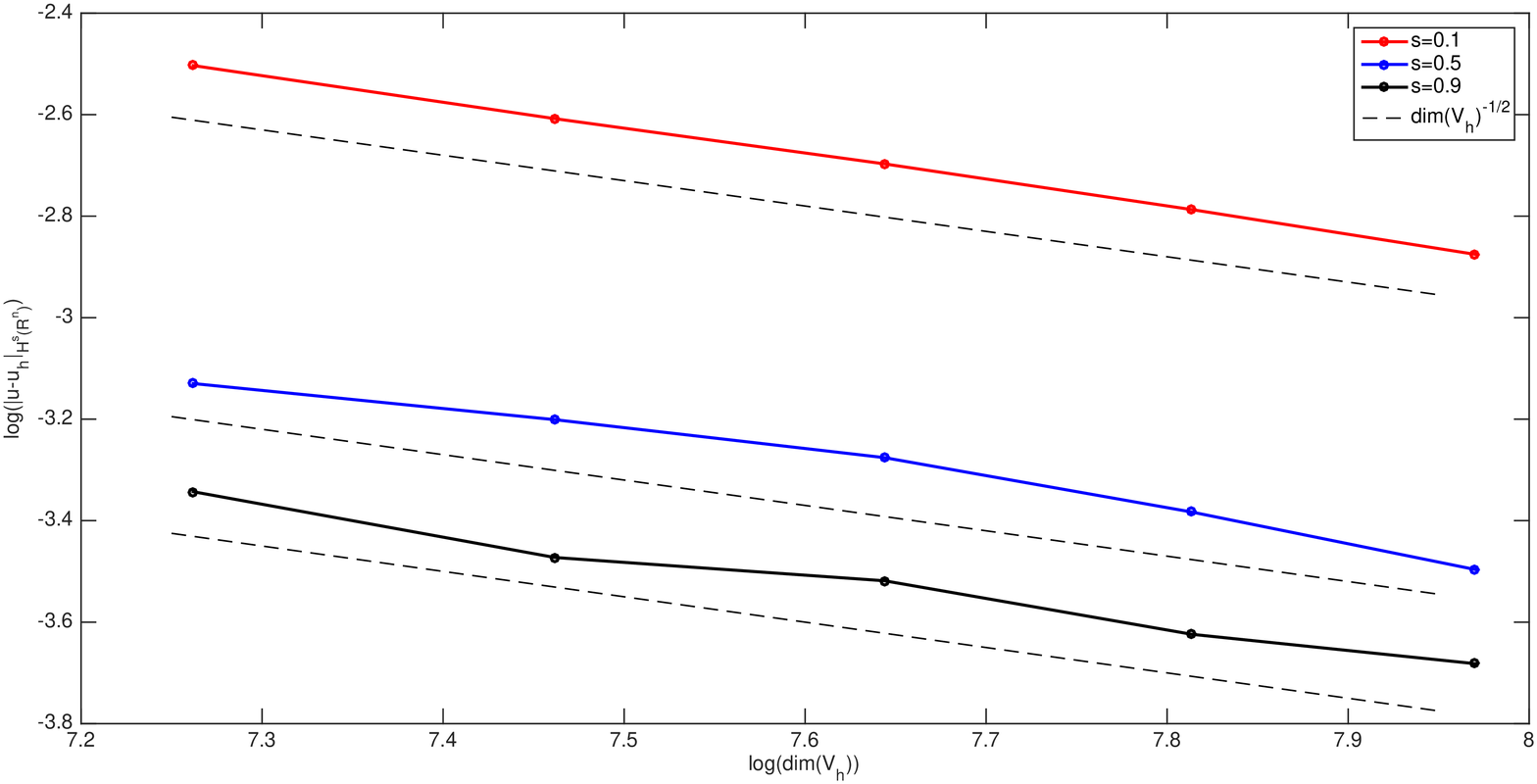} 
\caption{Convergence rates for the experiment described in section \ref{ss:qualitative} with $s=0.1$ (red), $s = 0.5$ (blue) and $s=0.9$ (black). A linear fitting of these data yields estimated convergence rates $0.52$, $0.51$ and $0.47$, respectively.
}	\label{fig:qualitative2}
\end{figure}
}

\bibliographystyle{plain}
\bibliography{bib}

\end{document}